\definecolor{myred}{RGB}{255,50,50}         
\definecolor{myblack}{RGB}{0,0,0} 
\newcommand{\SOC}[1]{{\mathcal{Q}^{#1}}}
\newcommand{\reInt}{\mathrm{ri}\,}
\newcommand{\lineality}{\mathrm{lin}\,}
\newcommand{\closure}{\mathrm{cl}\,}
\newcommand{\spanVec}{\mathrm{span}\,}
\newcommand{\norm}[1]{\lVert{#1}\rVert}
\newcommand{\inProd}[2]{\langle #1 , #2 \rangle }
\newcommand{\feasS}{\mathcal{F}_{\text{D }}^s}
\newcommand{\PSDcone}[1]{{\mathcal{S}^{#1}_+}}	
\newcommand{\nonNegative}[1]{{\mathcal{N}^{#1}}}	
\newcommand{\doubly}[1]{{\mathcal{D}^{#1}}}
\newcommand{\minFaceD}{ {\mathcal{F}_{\min}^D}}
\newcommand{\minFace}[1]{ {\mathcal{F}_{\min}^{#1}}}
\newcommand{\stdMap}{ {\mathcal{A}}}
\newcommand{\stdCone}{ {\mathcal{K}}}
\newcommand{\stdSpace}{ \mathcal{L}}
\newcommand{\stdAffine}{ \mathcal{V}}
\newcommand{\stdFace}{ \mathcal{F}}
\newcommand{\stdInt}{ {e}}
\newcommand{\pOpt}{ {\theta _P}}
\newcommand{\dOpt}{ {\theta _D}}
\newcommand{\opt}[1]{ {\theta _{#1}}}
\newcommand{\matRange}{{\mathrm{ range } \,}}
\renewcommand{\Re}{\mathbb{R}}    
\newcommand{\fDom}{\mathrm{ dom } \,}  
\renewcommand{\S}{\mathcal{S}}                    
\newcommand{\distP}{\ell _{\text{poly}}}        
\newcommand{\distS}{d _{\text{str}}} 
\newcommand{\dimSpace}{{\mathrm{dim}} \,}  
\newcommand{\PPS}{{PPS}}
\newcommand{\T}{\ast\hspace{-1pt}} 
\newtheorem*{remark}{Remark}
\title{Facial Reduction and Partial Polyhedrality}
\author{Bruno F. Louren\c{c}o
\thanks{Department of Mathematical Informatics, Graduate School of Information Science \& Technology, University of Tokyo, 7-3-1 Hongo, Bunkyo-ku, Tokyo 113-8656, Japan.
	(Email: \mbox{lourenco@mist.i.u-tokyo.ac.jp})}
        \and
        Masakazu Muramatsu\thanks{
                     Department of Computer and Network Engineering, The University of Electro-Communications 1-5-1 Chofugaoka, Chofu-shi, Tokyo, 182-8585 Japan. (E-mail: \mbox{MasakazuMuramatsu@uec.ac.jp})
                  }
         \and       
                Takashi Tsuchiya
\thanks{
National Graduate Institute for Policy Studies 7-22-1 Roppongi, Minato-ku, Tokyo 106-8677, Japan. (E-mail: \mbox{tsuchiya@grips.ac.jp}) \newline M. Muramatsu and T. Tsuchiya are supported in part with Grant-in-Aid for Scientific Research (B)24310112 and 
(C) 26330025. M. Muramatsu is  also partially supported by the
Grant-in-Aid for Scientific Research (B)26280005. T. Tsuchiya is also partially supported by the Grant-in-Aid for Scientific Research (B)15H02968.
                  }
        }
\date{November 2015 (Revised: October 2016, April 2017, April 2018)}
\newtheorem{definition}{Definition}
\newtheorem{lemma}[definition]{Lemma}
\newtheorem{proposition}[definition]{Proposition}
\newtheorem{example}{Example}
\newtheorem{corollary}[definition]{Corollary}
\newtheorem{theorem}[definition]{Theorem}
\begin{document}
\maketitle
\begin{abstract}
We present FRA-Poly, a facial reduction algorithm (FRA) for conic linear programs 
that is sensitive to the presence of polyhedral faces in the cone. 
The main goals of FRA and FRA-Poly are the same, i.e., 
finding the minimal face containing the feasible region and detecting
infeasibility, but FRA-Poly treats polyhedral constraints separately.  
This reduces the number of iterations drastically when there are many linear 
inequality constraints.
The worst case number of iterations for FRA-Poly is written in the terms of a 
``distance to polyhedrality'' quantity and 
provides better bounds than FRA under mild conditions. In particular, in the case 
of the doubly nonnegative cone, FRA-Poly gives a worst case bound of $n$ whereas the classical FRA is $\mathcal{O}(n^2)$. 
Of possible independent interest, we  prove a variant 
of Gordan-Stiemke's Theorem and a proper separation theorem that takes into account partial polyhedrality. We provide a discussion on the optimal facial reduction 
strategy and an instance that forces FRAs to perform many steps. 
We also present a few applications. In particular, we will use 
FRA-Poly to improve the bounds recently obtained 
by Liu and Pataki on the dimension of certain affine subspaces which appear in 
weakly infeasible problems.  
\end{abstract}

\section{Introduction}
Consider the following pair of primal and dual conic linear programs (CLPs):
\vspace*{-3\baselineskip} 
\begin{multicols}{2}
\begin{align}
\underset{x}{\inf} & \quad \inProd{c}{x} \label{eq:primal}\tag{P}\\ 
\mbox{subject to} & \quad \stdMap x = b \nonumber \\ 
&\quad x \in \stdCone ^* \nonumber 
\end{align}
	\break
	\begin{align}
	\underset{y}{\sup} & \quad \inProd{b}{y} \label{eq:dual} \tag{D} \\ 
	\mbox{subject to} & \quad c - \stdMap ^\T y \in \stdCone \nonumber,\\ \nonumber
	\end{align}	
\end{multicols}
\noindent where $\stdCone \subseteq \Re^n$ is a closed convex cone and $\stdCone^*$ is the dual cone $\{s \in \Re^n \mid \inProd{s}{x} \geq 0, \forall x \in \stdCone \}$. We have 
that $\stdMap: \Re^n \to \Re^m$ is a linear map, $b \in \Re^m$, $c \in \Re^n$ and 
$\stdMap^\T$ denotes the adjoint map. We also have $\stdMap^\T y = \sum _{i=1}^m \stdMap _i y_i$, for certain 
elements $\stdMap _i \in \Re^n$. The  inner product is denoted by $\inProd{\cdot}{\cdot}$.
We will use $\pOpt$ and $\dOpt$ to denote the primal and  dual optimal value, respectively.
It is understood that $\pOpt = +\infty$ if \eqref{eq:primal} is infeasible and 
$\dOpt = -\infty$ if \eqref{eq:dual} is infeasible.

In the absence of either a primal relative interior feasible solution or a dual 
relative interior slack, it is possible that $\pOpt \neq \dOpt$. A possible 
way of  correcting that is to let $\minFaceD$ be 
the minimal face of $\stdCone$ which contains the feasible slacks $\feasS = \{c- \stdMap ^\T y \in \stdCone \mid y \in \Re^m  \}$, then 
we substitute $\stdCone$ by $\minFaceD$ and $\stdCone^*$ by $(\minFaceD)^*$. With that, 
the new primal optimal value $\pOpt ' $ will satisfy $\pOpt ' = \dOpt$. This is precisely what 
facial reduction \cite{Borwein1981495, pataki_strong_2013, article_waki_muramatsu} approaches do.

In this paper, we analyze how to take advantage of the presence of polyhedral faces in 
$\stdCone$ when doing Facial Reduction. To do that, 
we introduce  FRA-Poly, which is a facial reduction algorithm (FRA) that, in many cases, provides a better worst case complexity than 
the usual approach, especially when $\stdCone$ is a direct product of several 
cones. The idea behind it is as follows. 
Facial reduction algorithms work by successively identifying what is called ``reducing 
directions'' $\{d_1, \ldots, d_\ell \}$. Starting with $\stdFace _1 = \stdCone$, these 
directions define faces of $\stdCone$ by the relation $\stdFace _{i+1} = \stdFace _{i} \cap \{d_i\}^\perp$. For feasible problems, $d_i$ must be 
such that  $\stdFace _{i+1}$ is a face of $\stdCone$ containing $\feasS$.
We then obtain a sequence  $\stdFace _1 \supseteq \ldots \supseteq \stdFace _\ell$ of faces of 
$\stdCone$ such that $\stdFace _i \supseteq \feasS$ for every $i$. Usually, a FRA proceeds 
until  $\minFaceD$ is found.

A key observation is that as soon as we reach a polyhedral face $\stdFace _i$, we can 
jump to the minimal face $\minFaceD$ in a single facial reduction step. 
In addition, when $\stdCone$ is a direct product $\stdCone = \stdCone^1 \times \ldots \times \stdCone^r$, 
each $\stdFace _i$ is also a direct product  $\stdFace_i^1 \times \ldots \times \stdFace_i^r$.
In this case an even weaker condition is sufficient to jump to $\minFaceD$, namely, 
if every block $\stdFace _i ^j$ is polyhedral or 
it is already equal to $j$-th block of the minimal face.

Our proposed algorithm FRA-Poly works in two phases. In Phase 1, it proceeds until a face 
$\stdFace _i$ satisfying  the  condition above is reached or until a certificate of 
infeasibility is found. In Phase 2, $\minFaceD$ 
is obtained with single facial reduction step. 
One  interesting point is 
that even if $\stdFace _i \neq \minFaceD$, if we substitute 
$\stdCone$ for $\stdFace _i$ in \eqref{eq:dual}, then strong 
duality will hold. The theoretical backing for that is given by Proposition \ref{prop:slater2}, which 
is a generalization of the classical strong duality theorem. In Section \ref{sec:partial}, 
we will give a generalization of the Gordan-Stiemke 
Theorem for the case when $\stdCone$ is the direct product of a closed convex cone and a polyhedral cone, see 
Theorem \ref{theo:partial_Gordan-Stiemke}. We also prove a proper separation theorem that will be 
the engine behind FRA-Poly, see Theorem \ref{theo:reint_partial}.

In order to analyze the number of facial reduction steps, 
we introduce a quantity called \emph{distance to polyhedrality} $\distP(\stdCone)$. This is the 
length \emph{minus one} of the longest strictly ascending chain of nonempty faces 
$\stdFace _1 \subsetneq \ldots \subsetneq \stdFace _\ell$ for 
which $\stdFace _1$ is polyhedral and $\stdFace _i$ is not polyhedral for all $i > 1$.  
If $\stdCone$ is a direct product of  cones $\stdCone^1 \times \ldots \times \stdCone^r$, 
we prove that FRA-Poly stops in at most $1 + \sum _{i = 1}^r \distP(\stdCone^i)$ steps. This is no 
worse than the bound given by classical FRA and, provided that at least two of the cones 
$\stdCone^i$ are not subspaces, it is strictly smaller. We also discuss whether our bounds 
are achieved by some problem instance, see Section \ref{sec:tight} and Proposition \ref{prop:worst_bound} (Appendix \ref{app:ex}).

As an application, we give a nontrivial bound for the singularity degree of CLPs
over cones that are intersections of two other cones. In particular, 
for the case of the doubly nonnegative cone $\doubly{n}$, we show 
that the longest chain of nonempty faces of $\doubly{n}$ has length $1 + \frac{n(n+1)}{2}$.
Therefore, the classical analysis gives the upper bound    $\frac{n(n+1)}{2}$ for the singularity
 degree of feasible problems over $\doubly{n}$.
On the other hand, using our technique, we show that the singularity degree of any problem over 
$\doubly{n}$ is at most $n$. We also use FRA-Poly to improve  bounds obtained by Liu and Pataki in Corollary 1 of \cite{LP17} on 
the dimension of certain subspaces connected to weakly infeasible problems.

Table \ref{table:bounds_summary} contains a summary of the bounds 
predicted by FRA and FRA-Poly for several cases. The notation 
$\ell _{\stdCone}$ indicates the length of the longest strictly ascending  chain of 
nonempty faces of $\stdCone$.
The first line corresponds 
to a single cone, the second to a product of $r$ arbitrary closed convex cones 
and the third to the product of $r_1$ Lorentz  cones and $r_2$ positive 
semidefinite cones, respectively. These results follow from Theorem \ref{theo:fra_bound} and 
 Example \ref{ex:psd_lorentz}. 
The last line contains the bounds for the doubly nonnegative cone, which follows 
from Proposition \ref{prop:doubly_chain} and  Corollary \ref{col:doubly_sing}.

\begin{table}
\centering
\begin{tabular}{c|c|c}
 & FRA & FRA-Poly \\  \hline
$\stdCone$ & $\ell_\stdCone $  & $1 + \distP(\stdCone)$\\
$\stdCone^1\times \ldots \times \stdCone^r$ & 1 + $\sum_{i=1}^{r} (\ell_{\stdCone^i} -1)$   &  1 + $\sum_{i=1}^{r} \distP({\stdCone^i})$ \\ 
$\SOC{t_1} \times \ldots \times \SOC{t_{r_1}} \times \PSDcone{n_1}  \times \ldots \times \PSDcone{n_{r_2}}$ & $1+ 2r_1 + \sum _{j = 1}^{r_2} n_j$ & $1 + r_1 + \sum _{j = 1}^{r_2} (n_j  - 1)$  \\ 
$\doubly{n}$ & $1 + \frac{n(n+1)}{2}$  & $n$ \\ 
\end{tabular} \caption{Summary of the worst case number of reduction steps predicted by the classical 
FRA analysis and by FRA-Poly.}\label{table:bounds_summary}
\end{table}


This work is divided as follows. In Section \ref{sec:not} we give some background on 
 related notions. In 
 Section \ref{sec:fra}, we review facial reduction.  In Section \ref{sec:partial} we prove 
 versions of two classical theorems taking into account partial polyhedrality.
 In Section \ref{sec:fra_poly} we analyze FRA-Poly and in Section \ref{sec:app} we 
 discuss two applications. Appendix \ref{app:partial} contains the proof of a strong duality 
 criterion. Appendix \ref{app:ex} illustrates FRA-Poly and contains an example which generalizes 
 an earlier worst case SDP instance by Tun\c{c}el.
 

\section{Notation and Preliminaries}\label{sec:not}
In this section, we will define the notation used throughout this article and 
review a few concepts. More details  can be found in \cite{rockafellar,pataki_handbook}.
For $C$ a closed convex set, we will denote by $\reInt C$ and 
$\closure C$ the relative interior and the closure of $C$, respectively.
If $\mathcal{U}$ is an 
arbitrary set, we denote by $\mathcal{U}^\perp$ the subspace which contains 
the elements orthogonal to it. We will denote by $\PSDcone{n}$ the cone of 
$n\times n$ symmetric positive semidefinite matrices and by $\SOC{n}$ the 
Lorentz cone $\{(x_0,\overline{x}) \in \Re\times \Re^{n-1} \mid x_0 \geq \norm{\overline{x}}_2 \}$, 
where $\norm{.}_2$ is the usual Euclidean norm.
The nonnegative orthant will be denoted by $\Re^n_+$.

If $\stdCone$ is a closed convex cone, we write $\stdCone^*$ for its dual 
cone. We write $\spanVec \stdCone$ for its linear span and $\lineality \stdCone$ for 
its lineality space, which is $\stdCone \cap -\stdCone$. $\stdCone$ is said to be \emph{pointed} if 
$\lineality \stdCone  = \{0\}$. We have $\lineality (\stdCone^*) = \stdCone^\perp$, see Theorem 14.6 in \cite{rockafellar}. Also, if we select $\stdInt \in \reInt \stdCone$ and $x \in \stdCone^*$, then $x \in \stdCone^\perp $ if and only if $\inProd{\stdInt}{x} = 0$\footnote{Suppose $\inProd{\stdInt}{x} = 0$. Then, given $y \in \stdCone$ we have $\alpha \stdInt +(1-\alpha)y \in \stdCone$ for some $\alpha > 1$, due to Theorem 6.4 in \cite{rockafellar}. Taking the inner product with $x$, we see that $\inProd{y}{x}$ must be zero.}.

The conic linear program \eqref{eq:dual} can be in four different feasibility 
statuses: $i)$ strongly feasible if $\feasS \cap \reInt \stdCone \neq \emptyset$; $ii)$ weakly feasible if 
$\feasS \neq \emptyset$ but $\feasS \cap \reInt \stdCone  = \emptyset$; $iii)$ weakly infeasible if 
$\feasS = \emptyset$ but $\text{dist}(c + \matRange \stdMap^\T ,\stdCone) = 0$; $iv)$ strongly 
infeasible if $\text{dist}(c + \matRange \stdMap^\T ,\stdCone) > 0$.  Note that 
\eqref{eq:primal} admits analogous definitions. 

The 
strong duality theorem states that if \eqref{eq:dual} is strongly feasible and 
$\dOpt < + \infty$, then $\pOpt = \dOpt$ and $\pOpt$ is attained. 
On the other hand, if \eqref{eq:primal} is strongly feasible and 
$\pOpt > -\infty$,  then $\pOpt = \dOpt$ and $\dOpt$ is attained. 
We  will also need a special version of the strong duality theorem. 
First, we need the following definition.

\begin{definition}[Partial Polyhedral Slater's condition]\label{def:pps}
	Let $\stdCone = \stdCone ^1\times \stdCone ^2$, where  $\stdCone ^1\subseteq \Re^{n_1},\stdCone ^2 \subseteq \Re^{n_2}$ are closed convex cones such that $\stdCone ^2$ is polyhedral.
	We say that \eqref{eq:dual} satisfies the Partial Polyhedral Slater's (\PPS) condition if there is 
	a slack $(s_1,s_2) = c-\stdMap^\T y$, such that $s_1 \in \reInt \stdCone^1$ and $s_2 \in \stdCone^2$.
	Similarly, we say that \eqref{eq:primal} satisfies the {\PPS} condition, if there is a primal 
	feasible solution $x = (x_1,x_2)$ for which $x_1 \in \reInt (\stdCone^1)^*$.
\end{definition}
The following is a strong duality theorem based on the {\PPS} condition.
As we could not find a precise reference for it, we give a proof in the Appendix \ref{app:partial}.  
\begin{proposition}[{\PPS}-Strong Duality]\label{prop:slater2}
	Let $\stdCone = \stdCone ^1\times \stdCone ^2$, where  $\stdCone ^1\subseteq \Re^{n_1},\stdCone ^2 \subseteq \Re^{n_2}$ are closed convex cones such that $\stdCone ^2$ is polyhedral.
	\begin{enumerate}[label=(\roman*)]
		\item If $\pOpt$ is finite and \eqref{eq:primal} satisfies the {\PPS} condition, then $\pOpt = \dOpt$ and the dual optimal value is attained.
		\item If $\dOpt$ is finite and \eqref{eq:dual} satisfies the {\PPS} condition, then $\pOpt = \dOpt$ and the primal 
		optimal value is attained.
	\end{enumerate}
\end{proposition}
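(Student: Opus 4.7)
The plan is to reduce both statements to the classical Slater strong duality theorem by reformulating the polyhedral factor as a nonnegative orthant constraint, then invoking the well-known fact that strong duality holds under Slater's condition on just the non-polyhedral part of a mixed convex-polyhedral program. I focus on part (ii), as (i) follows by swapping the roles of primal and dual.

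First I use the Weyl--Minkowski theorem to write $\stdCone^2 = \{z \in \Re^{n_2} : Ez \geq 0\}$ for some $E \in \Re^{k \times n_2}$; Farkas' lemma then gives $(\stdCone^2)^* = E^\T \Re^k_+$. Splitting $c = (c_1, c_2)$ and $\stdMap^\T y = (\stdMap_1^\T y, \stdMap_2^\T y)$ accordingly, problem \eqref{eq:dual} becomes the CLP over $\stdCone^1 \times \Re^k_+$ with the same $y$-feasible set and objective:
\[
\sup_y \bigl\{ \inProd{b}{y} : c_1 - \stdMap_1^\T y \in \stdCone^1, \; Ec_2 - E\stdMap_2^\T y \in \Re^k_+ \bigr\}.
\]
The primal of this reformulation has variables $(x_1, t) \in (\stdCone^1)^* \times \Re^k_+$ and the equality $\stdMap_1 x_1 + \stdMap_2 E^\T t = b$. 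Using $(\stdCone^2)^* = E^\T \Re^k_+$ together with the adjoint identity $\inProd{Ec_2}{t} = \inProd{c_2}{E^\T t}$, the map $(x_1, t) \mapsto (x_1, E^\T t)$ sends reformulated-primal feasible points onto \eqref{eq:primal}-feasible points preserving objective values; conversely, every $(x_1, x_2)$ feasible in \eqref{eq:primal} lifts back via any $t \geq 0$ with $E^\T t = x_2$. Hence the two primals share the same optimal value and attainment is preserved in both directions.

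Now the PPS slack gives $c_1 - \stdMap_1^\T y \in \reInt \stdCone^1$, which is Slater's condition on the $\stdCone^1$ component of the reformulated dual, while the $\Re^k_+$ component is merely feasible. The classical strong duality theorem for convex programs in which only the non-polyhedral constraints must satisfy Slater (Rockafellar, Theorem 28.2 and Corollary 28.2.2) then yields, under finiteness of $\dOpt$, that the reformulated primal infimum equals $\dOpt$ and is attained. Transferring through the primal correspondence proves (ii). Part (i) is symmetric: use $(\stdCone^2)^* = E^\T \Re^k_+$ to rewrite \eqref{eq:primal} as a CLP over $(\stdCone^1)^* \times \Re^k_+$, observe that the primal PPS is exactly a partial Slater condition for this reformulation, and apply the same classical theorem to its dual.

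The main obstacle is invoking partial-Slater strong duality cleanly in the CLP language: classical references frame the result for convex programs with functional constraints, so the translation requires checking that the Lagrangian dual of the reformulated CLP agrees with its conic dual, which is routine but must be done carefully. A self-contained alternative would be a direct separation argument exploiting the fact that polyhedrality of the $\Re^k_+$ factor makes the relevant image set closed, bypassing the functional-constraint machinery altogether.
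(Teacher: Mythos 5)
Your approach is genuinely different from the paper's. The paper reaches the result directly via Rockafellar's Theorem~20.1 on conjugates of sums of proper convex functions (some polyhedral): one encodes the equality constraint and the two cone memberships by three functions $f_1, f_2, f_3$ (with $f_1, f_2$ polyhedral), shows the PPS condition yields the required relative-interior overlap, and reads off both zero duality gap and dual attainment from the attained infimum in the conjugate-sum identity. No $H$-representation of $\stdCone^2$ is ever introduced; the polyhedrality is exploited abstractly through the hypotheses of Theorem~20.1. You instead replace $\stdCone^2$ by $\Re^k_+$ via Weyl--Minkowski and then appeal to Rockafellar's partial-Slater Lagrangian duality (Theorem~28.2 and a corollary). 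This buys concreteness—the reformulated problem is a genuine mixed conic/LP—but does not by itself discharge the crucial step.

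The gap is in the sentence that invokes Theorem~28.2/Corollary~28.2.2 ``then yields \dots that the reformulated primal infimum equals $\dOpt$ and is attained.'' Those results concern the \emph{Lagrangian} dual of the ordinary convex program $\sup \{\inProd{b}{y} : y\in D_1,\ (E\stdMap_2^\T y)_i \le (Ec_2)_i\}$, where $D_1 = \{y : c_1 - \stdMap_1^\T y \in \stdCone^1\}$ is a set constraint (not dualized). The Lagrangian dual has a single multiplier $\lambda \ge 0$ and an inner infimum over $y\in D_1$; it is not the conic primal over $(\stdCone^1)^*\times\Re^k_+$. To bridge them you must still dualize the inner conic LP $\inf_{y\in D_1}\inProd{\stdMap_2 E^\T\lambda^* - b}{y}$—a second application of strong duality, again using the Slater point in $\reInt \stdCone^1$—and then assemble the resulting $x_1^*\in(\stdCone^1)^*$ with $\lambda^*$ into a feasible primal point attaining $\dOpt$. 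You flag this as ``routine but must be done carefully,'' but it is really the substance of the proof, not a bookkeeping step, and it is precisely the work that the paper's conjugate computation performs in one stroke. The argument is salvageable along these lines, but as written it defers the central step. One smaller point worth making explicit: in transferring attainment through $(x_1,t)\mapsto (x_1,E^\T t)$, what matters is surjectivity of $t\mapsto E^\T t$ onto $(\stdCone^2)^*$, not injectivity, so attainment does transfer—but state this, since the map is generally not one-to-one.
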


\section{Facial Reduction}\label{sec:fra}
Facial Reduction was developed by Borwein and Wolkowicz to restore strong duality 
in convex optimization \cite{borwein_facial_1981,Borwein1981495}. Descriptions for 
the conic linear programming case have appeared, for instance, in Pataki \cite{pataki_strong_2013} and 
in Waki and Muramatsu \cite{article_waki_muramatsu}.

Here, we will suppose that our main interest is in the dual problem \eqref{eq:dual}.
 Facial Reduction hinges on the fact that 
strong feasibility fails  if and only if there 
is $d \in \stdCone ^*$ such that $\stdMap d = 0$  and
one of the two alternatives holds: $(i)$  $\inProd{c}{d} = 0$ and $d \not \in \stdCone ^\perp$; or 
$(ii)$ $\inProd{d}{c} < 0$, see Lemma 3.2 in \cite{article_waki_muramatsu}. If 
alternative $(i)$ holds, $\stdFace = \stdCone \cap \{d\}^\perp$ is a proper face of $\stdCone$ 
containing $\feasS$.  We then substitute  $\stdCone$ by $\stdFace $ and repeat.
If  $(ii)$ holds, \eqref{eq:dual} is infeasible. 
 We write below a generic 
facial reduction algorithm similar to the one described in \cite{article_waki_muramatsu}.

\noindent
[{\bf Generic Facial Reduction}]\label{alg:gen_fra}

{\bf Input:} \eqref{eq:dual}

{\bf Output:} A set of reducing directions $\{d_1, \ldots, d_\ell\}$ and $\minFaceD$.
\begin{enumerate}
\item $\stdFace _1 \leftarrow \stdCone, i \leftarrow 1 $
\item Let $d_i$ be an element in $\stdFace _i ^* \cap \ker \stdMap$ such that either: 
$i)$ $d_i \not \in \stdFace _i^\perp$ and $\inProd{c}{d_i} = 0$; or $ii)$ $\inProd{c}{d_i} < 0$.
If no such $d_i$ exists, let $\minFaceD \leftarrow \stdFace _i$ and stop. 
\item If $\inProd{c}{d_i} < 0$, let $\minFaceD \leftarrow  \emptyset$ and stop.
\item If $\inProd{c}{d_i} = 0$, let $\stdFace _{i+1}  \leftarrow \stdFace _i  \cap \{d_i\}^\perp, i\leftarrow i+1$ and return to 2).
\end{enumerate}

We will refer to the directions satisfying $d_i \in \stdFace _i^* \cap \ker \stdMap$ and 
$\inProd{c}{d_i} \leq 0$ as \emph{reducing directions}, so that the $d_i$ in Step 2. are indeed reducing directions. An important issue when doing facial reduction is how to model the 
search for the reducing directions. It is sometimes said that doing facial 
reduction can be as hard as solving the original problem. However, an important 
difference is that the search for the $d _i$ can be cast as a pair of primal and 
dual problems which are always strongly feasible.
This was shown in the work by Cheung, Schurr and Wolkowicz \cite{csw13} and 
in our previous work \cite{lourenco_muramatsu_tsuchiya3}. 
Recently,
Permenter, Friberg and Andersen showed that $d_i$ can also be obtained as by-products 
of self-dual homogeneous methods \cite{PFA15}.
There are 
also approximate approaches such as the one described by Permenter and Parrilo \cite{PP14}, 
where the search for the $d_i$ is conducted in a more tractable cone at 
the cost of, perhaps, failing to identify $\minFaceD$, but still simplifying 
the problem nonetheless. See also the article by Frieberg \cite{Fr16}, where 
conic constraints are dropped when searching for the reducing directions, making it easier 
to find the $d_i$ but introducing  representational issues.

In this article, we will search for reducing directions by
considering the pair \eqref{eq:red} and \eqref{eq:red_dual} introduced in \cite{lourenco_muramatsu_tsuchiya3}, which 
are parametrized by $\stdMap$, $c$, $\stdCone, \stdInt, \stdInt^*$. In Phase 1 of FRA-Poly, we will always 
select $\stdInt$ and $\stdInt^*$ according to Lemma \ref{lemma:red2}. Different choices will be discussed/used 
in Phase 2 of FRA-Poly and on Sections \ref{sec:compl} and \ref{sec:tight}.
\begin{align}
\underset{x,t,w}{\mbox{minimize}} & \quad t & \tag{$P_\stdCone $}\label{eq:red}  \\ 
\mbox{subject to} & \quad -\inProd{c}{x  -t\stdInt^*} + t - w &= 0 \label{eq:red:1}\\
&\quad \inProd{\stdInt}{x} + w &= 1 \label{eq:red:2}\\
& \quad\stdMap x  -t\stdMap \stdInt^* & = 0 \label{eq:red:3}\\
&\quad (x,t,w) \in \stdCone ^* \times \Re_+ \times \Re_+ \nonumber\\
\underset{y_1,y_2,y_3}{\mbox{maximize}} &\quad  y_2 & \tag{$D_\stdCone $}\label{eq:red_dual}  \\ 
\mbox{subject to} & \quad cy_1 -\stdInt y_2 -\stdMap^\T y_3 \in \stdCone \label{eq:red_dual:1} \\
&\quad 1-y_1(1+\inProd{c}{\stdInt^*}) + \inProd{\stdInt^* }{\stdMap ^\T y_3}\geq 0 \label{eq:red_dual:2} \\
& \quad y_1-y_2 \geq 0 \label{eq:red_dual:3} \\
&\quad (y_1,y_2,y_3) \in \Re \times \Re \times \Re^m. \nonumber
\end{align}

Recall that our goal is to find a point $x \in \ker \stdMap \cap \stdCone$ satisfying  $\inProd{c}{x} \leq 0$. The idea behind  \eqref{eq:red} is to shift the problem by $-t\stdInt^*$ (Equations \eqref{eq:red:1} and \eqref{eq:red:3}) and add constraints to ensure 
the $x$ stays in a bounded region (Equation \eqref{eq:red:2}). These changes ensure that \eqref{eq:red} and \eqref{eq:red_dual} 
satisfy the PPS condition when the parameters $\stdInt, \stdInt^*$ are chosen appropriately as in
the next section.

\section{Partial Polyhedrality Theorems}\label{sec:partial}

We are now in position to present a choice of $\stdInt, \stdInt^*$ for \eqref{eq:red} and \eqref{eq:red_dual}
taking into account the {\PPS} condition.

\begin{lemma}\label{lemma:red2}
Let $\stdCone = \stdCone ^1 \times \stdCone ^2$ be a 
closed convex cone, such that $\stdCone ^2$ is polyhedral.
Consider the pair \eqref{eq:red} and \eqref{eq:red_dual} with 
$\stdInt$ and $\stdInt^*$ such that  $\stdInt = (\stdInt_1,0) \in (\reInt \stdCone ^1)\times \{0\}$ and 
$\stdInt^* \in \reInt \stdCone^*$.  The following properties hold.
\begin{enumerate}[label=(\roman*)]
\item The following are solutions to \eqref{eq:red} and \eqref{eq:red_dual} that satisfy the {\PPS} condition: 
\begin{align*}
(x,t,w)& = \left(\frac{\stdInt^*}{\inProd{\stdInt}{\stdInt^*}+1},\frac{1}{\inProd{\stdInt}{\stdInt^*}+1}, \frac{1}{\inProd{\stdInt}{\stdInt^*}+1}\right) \\
 (y_1,y_2,y_3) & = (0,-1,0).
 \end{align*} 
In particular, 
$\opt{\text{\ref{eq:red}}} =  \opt{\text{\ref{eq:red_dual}}}$.
\end{enumerate}
Let $(x^*,t^*,w^*)$ be any primal optimal solution and 
$(y_1^*,y_2^*,y_3^*)$ be any dual optimal solution.
 \begin{enumerate}[label=(\roman*)]
 \setcounter{enumi}{1} 
\item $\opt{\text{\ref{eq:red}}} =  \opt{\text{\ref{eq:red_dual}}} = 0$ if and only if 
one of the two alternatives below holds:
\begin{enumerate}
\item $\inProd{c}{x^*} < 0 $ and $\feasS = \minFaceD = \emptyset$, or
\item $\inProd{c}{x^*} = 0 $, $\feasS \subseteq  \stdCone \cap \{x^*\}^\perp \subsetneq \stdCone$ and 
$x^*_1 \not \in (\stdCone^1)^\perp = \lineality( (\stdCone^1 )^*)$.
\end{enumerate}

\item $\opt{\text{\ref{eq:red}}} =  \opt{\text{\ref{eq:red_dual}}} >  0$ if and only if the {\PPS} condition is satisfied for \eqref{eq:dual}. In this case, we have
$
c  -\stdMap^\T \frac{y_3^*}{y_1^*} \in (\reInt \stdCone ^1)\times \{0\}. 
$ (In particular, it is feasible for \eqref{eq:dual})

\end{enumerate}
\end{lemma}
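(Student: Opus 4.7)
I will tackle the three items in sequence, using direct verification for (i) and then leveraging (i) together with Proposition~\ref{prop:slater2} for (ii) and (iii).

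For (i), I substitute the claimed primal triple $(x,t,w)=(\stdInt^*/\alpha,1/\alpha,1/\alpha)$, where $\alpha:=1+\inProd{\stdInt}{\stdInt^*}$, into \eqref{eq:red:1}--\eqref{eq:red:3}. The three equations collapse to $0=0$, $1=1$, and $\stdMap\stdInt^*/\alpha-\stdMap\stdInt^*/\alpha=0$, and $(x,t,w)\in\stdCone^*\times\Re_+\times\Re_+$ is immediate. Dually, the slack at $(0,-1,0)$ equals $\stdInt\in\stdCone$, and the two scalar inequalities read $1\geq 0$. To verify \PPS, I decompose $\stdCone^*=(\stdCone^1)^*\times(\stdCone^2)^*$ (the second factor polyhedral): the first coordinate of $x$ is $\stdInt_1^*/\alpha\in\reInt(\stdCone^1)^*$ because $\stdInt^*\in\reInt\stdCone^*$, and the first coordinate of the dual slack is $\stdInt_1\in\reInt\stdCone^1$. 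Since the primal objective $t\in\Re_+$ is finite and the primal is feasible, Proposition~\ref{prop:slater2}(i) yields $\opt{\text{\ref{eq:red}}}=\opt{\text{\ref{eq:red_dual}}}$.

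For the forward direction of (ii), assume $\opt{\text{\ref{eq:red}}}=0$ at an optimal $(x^*,t^*,w^*)$, so $t^*=0$ and the constraints reduce to $x^*\in\stdCone^*$, $\stdMap x^*=0$, $\inProd{c}{x^*}=-w^*\leq 0$, and $\inProd{\stdInt}{x^*}+w^*=1$. If $w^*>0$, then $\inProd{c}{x^*}<0$ and $x^*$ is a Farkas certificate for \eqref{eq:dual}: every candidate slack $s=c-\stdMap^\T y\in\stdCone$ would give $\inProd{s}{x^*}=\inProd{c}{x^*}<0$, contradicting $x^*\in\stdCone^*$, so $\feasS=\minFaceD=\emptyset$, i.e.\ case (a). If $w^*=0$, then $\inProd{c}{x^*}=0$ and $\inProd{\stdInt_1}{x_1^*}=\inProd{\stdInt}{x^*}=1$; the footnote of Section~\ref{sec:not} applied at $\stdInt_1\in\reInt\stdCone^1$ forces $x_1^*\notin(\stdCone^1)^\perp=\lineality((\stdCone^1)^*)$, and every $s\in\feasS$ satisfies $\inProd{s}{x^*}=\inProd{c}{x^*}-\inProd{y}{\stdMap x^*}=0$, so $\feasS\subseteq\stdCone\cap\{x^*\}^\perp$; the inclusion is strict because $x^*\notin\stdCone^\perp$, giving case (b). The converse of (ii) will follow by contraposition from (iii): in (a), $\feasS=\emptyset$ directly rules out any \PPS slack; in (b), a \PPS slack $s$ would satisfy $\inProd{s_1}{x_1^*}=0$ with $s_1\in\reInt\stdCone^1$, forcing $x_1^*\in(\stdCone^1)^\perp$ by the same footnote and contradicting (b). In either case, \PPS fails and (iii) gives $\opt{\text{\ref{eq:red_dual}}}\not>0$, hence $=0$.

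For (iii), assume $\opt{\text{\ref{eq:red_dual}}}>0$ at a dual optimum $(y_1^*,y_2^*,y_3^*)$ with $y_2^*>0$; then \eqref{eq:red_dual:3} forces $y_1^*\geq y_2^*>0$, and dividing \eqref{eq:red_dual:1} by $y_1^*$ yields $c-\stdMap^\T(y_3^*/y_1^*)=(y_2^*/y_1^*)\stdInt+z$ for some $z\in\stdCone$. The first block of this slack is $(y_2^*/y_1^*)\stdInt_1+z_1$, which lies in $\reInt\stdCone^1$ by the standard identity $\reInt\stdCone^1+\stdCone^1\subseteq\reInt\stdCone^1$, so \eqref{eq:dual} satisfies \PPS. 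Conversely, given a \PPS slack $\bar s=c-\stdMap^\T\bar y$ with $\bar s_1\in\reInt\stdCone^1$, I set $\lambda:=1/(1+\inProd{\stdInt^*}{\bar s})\in(0,1]$ and try $(y_1,y_3)=(\lambda,\lambda\bar y)$: a short computation using $\inProd{\stdInt^*}{\stdMap^\T\bar y}=\inProd{\stdInt^*}{c}-\inProd{\stdInt^*}{\bar s}$ makes \eqref{eq:red_dual:2} hold with equality, and for small enough $y_2>0$ both $\lambda\bar s_1-y_2\stdInt_1\in\stdCone^1$ (from $\bar s_1\in\reInt\stdCone^1$) and $y_2\leq\lambda$ hold, giving a dual feasible triple with positive objective. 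The main obstacle is this converse, where the three dual constraints must be simultaneously met by a careful rescaling; the unifying technical ingredients throughout are the footnote in Section~\ref{sec:not} and the relative-interior identity $\reInt\stdCone^1+\stdCone^1\subseteq\reInt\stdCone^1$.
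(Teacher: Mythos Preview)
Your proof is correct. Parts (i), the forward direction of (ii), and the forward direction of (iii) match the paper's argument essentially verbatim. The difference lies in how you close the two backward implications: the paper proves (ii)$\Leftarrow$ \emph{directly}, by rescaling $x^*$ to a feasible point $(\alpha x^*,0,-\alpha\inProd{c}{x^*})$ with $t=0$, and then obtains (iii)$\Leftarrow$ by contraposition through (ii) forward. You do the mirror image: you prove (iii)$\Leftarrow$ \emph{directly}, by constructing a dual feasible triple $(\lambda,y_2,\lambda\bar y)$ with $y_2>0$ from a {\PPS} slack $\bar s$, and then obtain (ii)$\Leftarrow$ by contraposition through (iii) forward. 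The scaling $\lambda=1/(1+\inProd{\stdInt^*}{\bar s})$ that you introduce to hit \eqref{eq:red_dual:2} with equality is exactly the right move, and the remaining two constraints are handled cleanly for small $y_2>0$.

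What each approach buys: the paper's route keeps the argument on the primal side and avoids having to juggle all three dual constraints simultaneously, but it tacitly uses $\stdMap x^*=0$, which is only guaranteed once one already knows $t^*=0$. Your route sidesteps that subtlety entirely, since in your (ii)$\Leftarrow$ you only use the hypotheses of (a) or (b) as stated (in particular $\feasS\subseteq\{x^*\}^\perp$) rather than any constraint of \eqref{eq:red}. A minor remark: neither your argument nor the paper's actually establishes the ``$\times\{0\}$'' in the displayed conclusion of (iii); both only yield $(\reInt\stdCone^1)\times\stdCone^2$, which is what {\PPS} requires and what is used downstream.
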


%
%
%
%
%
%
\begin{proof}

\begin{enumerate}[label=({\it{\roman*}})]
\item  Due to choice of $\stdInt$ and $\stdInt^*$, it is clear that the solutions 
meet the PPS condition.

\item \fbox{$\opt{\text{\ref{eq:red}}} = 0 \Rightarrow$  (a) or (b) holds.}
Suppose that $\opt{\text{\ref{eq:red}}} = 0$ and let $(x^*,0,w^*)$ be an optimal solution for \eqref{eq:red}.
Due to the constraints in \eqref{eq:red}, we  have
$$\stdMap x^* = 0, \qquad -\inProd{c}{x^*} = w^* \geq 0, \qquad x^* \in \stdCone ^*.$$ 
Note that if $\inProd{c}{x^*} < 0$, then $\minFaceD = \feasS = \emptyset$. This is 
alternative (a).

If $\inProd{c}{x^*} = 0$, then (\eqref{eq:red:1}) implies 
$w^* = 0$. Since $\stdMap x^* = 0$, we obtain  
$$\feasS \subseteq  \stdCone \cap \{x^*\}^\perp. $$
By \eqref{eq:red:2}, we have $\inProd{\stdInt}{x^*} = 1$, so that $x^* = (x_1^*,x_2^*)$ satisfies $x_1^* \not \in (\stdCone^1)^\perp$, due to the choice of $\stdInt$. 
Therefore, the inclusion $\stdCone \cap \{x^*\}^\perp \subseteq \stdCone$ is strict, 
that is,  $\stdCone \cap \{x^*\}^\perp \subsetneq \stdCone$. This is alternative (b).

\fbox{(a) or (b) holds $\Rightarrow \opt{\text{\ref{eq:red}}} = 0$.} 
First recall that  $t$ is constrained to be nonnegative, therefore  $\opt{\text{\ref{eq:red}}} \geq 0$.
Next, we will prove the following inequality
\begin{equation}\label{eq:lemma:aux}
	{\inProd{\stdInt}{x^*} - \inProd{c}{x^*}} > 0.
\end{equation}
Since $\stdInt \in \stdCone,x^* \in \stdCone^*$, we have 
$\inProd{\stdInt}{x^*} \geq 0$. If $(a)$ holds, then $- \inProd{c}{x^*}> 0$.
In particular, \eqref{eq:lemma:aux} holds.
Now, we consider the case where (b) holds. 
In this case, we have $\inProd{c}{x^*} = 0$. 
If $\inProd{\stdInt}{x^*}$ is also zero, then $\inProd{\stdInt_1}{x_1^*}$ is zero and since $\stdInt_1 \in \reInt \stdCone^1$, $ x^*_1$ must 
belong to $(\stdCone^1)^\perp$. However, this is not allowed 
since we assumed that $x_1^* \not \in (\stdCone^1)^\perp$.
Therefore, we have $\inProd{\stdInt}{x^*} > 0$ and, in particular,  \eqref{eq:lemma:aux} also holds.

Let  $$
\alpha = \frac{1}{\inProd{\stdInt}{x^*} - \inProd{c}{x^*}}.
$$
Then $\alpha > 0$ and 
$(\alpha x^*,0,-\alpha \inProd{c}{x^*})$ is a solution to \eqref{eq:red} with value $0$, 
so that $\opt{\text{\ref{eq:red}}} = 0$.

\item 
\fbox{ $\opt{\text{\ref{eq:red_dual}}} > 0 \Rightarrow $ PPS holds for \eqref{eq:dual}}
From \eqref{eq:red_dual:1}, we have
$$
c y_1^* - \stdInt y_2^* - {\stdMap}^\T y_3^* \in \stdCone.
$$
Since $y_2^*=\opt{\text{\ref{eq:red_dual}}}
>0$, we have, due to \eqref{eq:red_dual:3}, $y_1^* > 0$.
 Hence, 
 $$c - \stdInt \frac{y_2^*}{y_1^*} - {\stdMap}^\T \frac{y_3^*}{y_1^*} \in \stdCone.$$ 
We conclude that 
$c - {\stdMap}^\T y_3^*/y_1^* \in (\stdCone^1 +y_2^*/y_1^* \stdInt_1)\times  \{0\} \subseteq (\reInt \stdCone ^1)\times \{0\}$, due to the choice of  $\stdInt$ and the fact that $y_2^*/y_1^* > 0$. 

\fbox{ $\opt{\text{\ref{eq:red_dual}}} = 0 \Rightarrow $ PPS does not hold for \eqref{eq:dual}  } 
If $t^* = 0$ and $(x^*,0,w^*)$ is an optimal solution 
for \eqref{eq:red}, then either $(a)$ or $(b)$ of item $(ii)$ is satisfied. 
If $(a)$ is satisfied, then \eqref{eq:dual} is infeasible and thus the PPS condition
cannot hold. 
If $(b)$ is satisfied, then $\inProd{c}{x^*} = 0$,
$\stdMap x^* = 0$ and $x_1^* \not \in (\stdCone ^1)^\perp$. If 
$(s_1,s_2)$ is a feasible slack for \eqref{eq:dual}, we have $\inProd{s_1}{x_1^*} + \inProd{s_2}{x_2^*} = 0$, so that 
$\inProd{s_1}{x_1^*} = 0$.  
As $x_1^* \not \in (\stdCone ^1)^\perp$, we have that $s_1 \not \in \reInt \stdCone^1$. This means
that there is no feasible solution for \eqref{eq:dual} satisfying the PPS
condition.
\end{enumerate}
\end{proof}

We now prove a theorem that dualizes the criterion in Proposition \ref{prop:slater2}.

\begin{theorem}\label{theo:reint_partial}
	Let $c \in \Re^n$, $\stdSpace \subseteq \Re^n$ be a subspace and $\stdCone = \stdCone ^1 \times \stdCone ^2$ be a 
	closed convex cone, such that $\stdCone ^2$ is polyhedral. Then 
	$(\stdSpace+c)\cap ( (\reInt \stdCone ^1) \times \stdCone^2 ) = \emptyset$ if and only if 
	one of the conditions below holds:
	\begin{enumerate}[label=({\alph*})]
		\item there exists $x \in \stdCone^* \cap \stdSpace^\perp$ such that 
		$\inProd{c}{x} < 0$;
		\item  there exists $x = (x_1,x_2) \in \stdCone^* \cap \stdSpace^\perp \cap \{c\}^\perp$ such that 
		$x_1 \not \in (\stdCone ^1)^\perp$.
	\end{enumerate}
\end{theorem}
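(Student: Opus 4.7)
The plan is to obtain one direction by a standard inner-product argument and to derive the other direction by casting the question as a PPS feasibility question for a problem of the form \eqref{eq:dual} and invoking Lemma~\ref{lemma:red2}.

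For the easy direction, I would suppose $z = c+v$ lies in $(\reInt \stdCone^1)\times \stdCone^2$ for some $v \in \stdSpace$, and then pair $z$ with any candidate $x \in \stdCone^*\cap \stdSpace^\perp$. Writing $x = (x_1,x_2)$ and $z = (z_1,z_2)$, we have $\inProd{x_1}{z_1}\ge 0$ and $\inProd{x_2}{z_2}\ge 0$, while $\inProd{x}{z} = \inProd{x}{c}$ since $v \perp x$. If condition~(a) holds, then $\inProd{x}{c}<0$, contradicting $\inProd{x}{z}\ge 0$. If condition~(b) holds, then the two nonnegative summands sum to $\inProd{x}{c}=0$, so $\inProd{x_1}{z_1}=0$ with $z_1 \in \reInt \stdCone^1$; the footnote-style fact from Section~\ref{sec:not} (that an element of $(\stdCone^1)^*$ orthogonal to a relative interior point of $\stdCone^1$ lies in $(\stdCone^1)^\perp$) yields $x_1 \in (\stdCone^1)^\perp$, contradicting~(b).

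For the nontrivial direction, I would encode $\stdSpace$ as the range of an adjoint. Pick any linear map $\stdMap\colon \Re^n \to \Re^m$ with $\matRange \stdMap^\T = \stdSpace$, which forces $\ker \stdMap = \stdSpace^\perp$. Since $\stdSpace$ is a subspace, the dual feasible slack set equals $c + \stdSpace$, and the PPS condition for \eqref{eq:dual} becomes precisely $(c+\stdSpace) \cap ((\reInt \stdCone^1)\times \stdCone^2) \neq \emptyset$; thus our assumption says PPS fails for \eqref{eq:dual}. Choosing $\stdInt \in (\reInt \stdCone^1)\times \{0\}$ and $\stdInt^* \in \reInt \stdCone^*$, Lemma~\ref{lemma:red2}(iii) gives $\opt{\text{\ref{eq:red}}}=0$, and part~(ii) of the same lemma supplies a minimizer $(x^*,0,w^*)$ with $x^* \in \stdCone^*$, $\stdMap x^* = 0$ (hence $x^* \in \stdSpace^\perp$), for which either $\inProd{c}{x^*}<0$ (giving condition~(a) of the theorem) or $\inProd{c}{x^*}=0$ and $x_1^* \notin (\stdCone^1)^\perp$ (giving condition~(b)).

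The main obstacle I anticipate is the clean bookkeeping between Lemma~\ref{lemma:red2}'s (a)/(b) alternatives and the theorem's (a)/(b) alternatives, in particular verifying that the primal minimizer delivered by the lemma indeed satisfies $x^* \in \stdSpace^\perp$ and picking $\stdInt, \stdInt^*$ so that the lemma's hypotheses are met. Once that correspondence is laid out, the theorem follows immediately from the lemma, and no further separation argument is needed.
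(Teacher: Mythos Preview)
Your proposal is correct and follows essentially the same approach as the paper: encode $\stdSpace$ as $\matRange \stdMap^\T$, identify failure of the intersection with failure of the PPS condition for \eqref{eq:dual}, and invoke Lemma~\ref{lemma:red2}. The only difference is presentational: the paper lets the lemma handle both directions at once (since item~(iii) gives ``PPS holds $\Leftrightarrow \opt{\text{\ref{eq:red}}}>0$'' and item~(ii) gives ``$\opt{\text{\ref{eq:red}}}=0 \Leftrightarrow$ (a) or (b)''), whereas you spell out the easy direction by a direct inner-product computation---which is in fact the same argument already embedded in the proof of Lemma~\ref{lemma:red2}(iii).
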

\begin{proof}
	Select a linear map $\stdMap$ such that $\stdSpace = \matRange \stdMap^\T $ (therefore, $\stdSpace^\perp = \ker \stdMap$) 
	and consider the problem \eqref{eq:dual} and the pair of problems \eqref{eq:red} and \eqref{eq:red_dual}. 
	Note that $(\stdSpace+c)\cap ( (\reInt \stdCone ^1) \times \stdCone^2 ) = \emptyset$ if and 
	only if the {\PPS} condition is \emph{not} satisfied for \eqref{eq:dual}.
	The result then follows from Lemma \ref{lemma:red2}.	
\end{proof}

One of the points of doing facial reduction is to solve problems that  would not be 
solvable directly if, say, we fed them to an interior point method based solver. 
Therefore, it is natural to consider whether the problems \eqref{eq:red}, 
\eqref{eq:red_dual} are themselves solvable, with the choice of $\stdInt,\stdInt^*$ provided 
by Lemma \ref{lemma:red2}.
We note that due to item $(i)$ of Lemma \ref{lemma:red2}, 
the pair \eqref{eq:red}, \eqref{eq:red_dual} can be solved by infeasible interior-point 
methods in the case of semidefinite and second order cone programming, even though 
they might fail to be strongly feasible.
This is because the convergence theory relies on the existence of optimal solutions 
affording zero duality gap, rather than strong feasibility. See, for instance, 
item 2. of Theorem 11 in the work by Nesterov, Todd and Ye \cite{nesterov_infeasible}.

We remark that Theorem \ref{theo:reint_partial} implies a version of the Gordan-Stiemke's Theorem that takes into account partial polyhedrality.  It contains as a special case the classical version described in Corollary 2 in
Luo, Sturm and Zhang \cite{Luo97dualityresults}. 

\begin{corollary}[Partial Polyhedral Gordan-Stiemke's Theorem]\label{theo:partial_Gordan-Stiemke}\label{col:partial_gordan}
Let $\stdSpace$ be a subspace and $\stdCone = \stdCone ^1 \times \stdCone ^2$ be a 
closed convex cone, such that $\stdCone ^2$ is polyhedral. Then:
\begin{equation*}
\stdCone^* \cap \stdSpace^{\perp} \subseteq (\lineality(( \stdCone ^1)^*)) \times (\stdCone ^2)^* \Leftrightarrow  \left( (\reInt ({\stdCone^1}))\times (\stdCone^2) \right)\cap \stdSpace  \neq \emptyset.
\end{equation*}
\end{corollary}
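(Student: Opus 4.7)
The plan is to obtain the corollary as a direct specialization of Theorem \ref{theo:reint_partial} by setting $c = 0$. First I would observe that under this choice, alternative (a) of Theorem \ref{theo:reint_partial} demands an $x \in \stdCone^* \cap \stdSpace^\perp$ satisfying $\inProd{0}{x} < 0$, which is impossible; so (a) can be discarded. Alternative (b) reduces to the existence of $x = (x_1,x_2) \in \stdCone^* \cap \stdSpace^\perp$ with $x_1 \notin (\stdCone^1)^\perp$, since the condition $\inProd{c}{x} = 0$ is automatic.

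Plugging $c = 0$ into the left-hand side of Theorem \ref{theo:reint_partial} also simplifies things: the affine set $\stdSpace + c$ becomes the subspace $\stdSpace$ itself. Thus the theorem yields the equivalence
\[
\stdSpace \cap \bigl((\reInt \stdCone^1) \times \stdCone^2\bigr) = \emptyset
\ \Longleftrightarrow\
\exists\,x = (x_1,x_2) \in \stdCone^* \cap \stdSpace^\perp \text{ with } x_1 \notin (\stdCone^1)^\perp.
\]
Taking the contrapositive of this equivalence gives
\[
\stdSpace \cap \bigl((\reInt \stdCone^1) \times \stdCone^2\bigr) \neq \emptyset
\ \Longleftrightarrow\
\forall\,x = (x_1,x_2) \in \stdCone^* \cap \stdSpace^\perp,\ x_1 \in (\stdCone^1)^\perp.
\]

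Finally I would rewrite the right-hand side using two facts recalled in Section \ref{sec:not}: the decomposition $\stdCone^* = (\stdCone^1)^* \times (\stdCone^2)^*$ that follows from $\stdCone = \stdCone^1 \times \stdCone^2$, and the identity $(\stdCone^1)^\perp = \lineality((\stdCone^1)^*)$. Together these show that the universal statement ``every $(x_1,x_2) \in \stdCone^* \cap \stdSpace^\perp$ has $x_1 \in (\stdCone^1)^\perp$'' is precisely the inclusion $\stdCone^* \cap \stdSpace^\perp \subseteq \lineality((\stdCone^1)^*) \times (\stdCone^2)^*$, yielding the desired biconditional.

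There is essentially no obstacle here since the heavy lifting has been carried out in Theorem \ref{theo:reint_partial}; the only care needed is to verify that the $c=0$ specialization genuinely degenerates alternative (a) and that the polyhedrality of $\stdCone^2$ is inherited so the theorem can be invoked. Both are immediate, so the corollary reduces to a purely bookkeeping rewriting.
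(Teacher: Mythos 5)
Your proposal is correct and follows exactly the route the paper takes: specialize Theorem \ref{theo:reint_partial} to $c=0$, discard alternative (a), take the contrapositive, and rewrite via $(\stdCone^1)^\perp = \lineality((\stdCone^1)^*)$ together with $\stdCone^* = (\stdCone^1)^*\times(\stdCone^2)^*$. The paper states this in one line; you have merely spelled out the bookkeeping.
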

\begin{proof}
Take $c = 0$ in Theorem \ref{theo:reint_partial} and recall that $\lineality(( \stdCone ^1)^*) = (\stdCone^1)^\perp$.
\end{proof}

For more results taking into account partial polyhedrality see Chapter 20 of 
\cite{rockafellar} and Propositions 1 and 2 of \cite{lourenco_muramatsu_tsuchiya2}.

\section{Distance to polyhedrality, FRA-Poly and tightness}\label{sec:fra_poly}
Here we will discuss FRA-Poly, which is 
a facial reduction algorithm divided into two phases. The first detects infeasibility and 
restores strong duality, while the second finds the minimal face. For an example illustrating 
FRA-Poly, see Appendix \ref{app:ex}.

The idea behind the classical FRA is that whenever strong feasibility 
fails, we can obtain reducing directions until strong feasibility is satisfied 
again. Similarly, Phase 1 of FRA-Poly is based on
the fact that whenever the {\PPS} condition in Proposition \ref{prop:slater2}  fails, we may 
also obtain reducing directions until the {\PPS} condition is satisfied, thanks to Theorem \ref{theo:reint_partial}. 
After that, a single extra facial reduction step is enough to go to the minimal face.
 As the {\PPS} condition is weaker than full-on strong feasibility, FRA-Poly 
has better worst case bounds  in many cases.

We now present a disclaimer of sorts.
The theoretical results presented in this section and the next stand whether FRA-Poly is doable or 
not for a given $\stdCone$. 
If we wish to do facial reduction concretely (even if it is by hand!), we need to 
make a few assumptions on our computational capabilities 
and on our knowledge on the lattice of faces of $\stdCone$. 
First of all, we must be able to solve problems over faces of $\stdCone$ such that both the primal and the dual 
satisfy the {\PPS} condition and we must also be able to do 
basic linear algebraic operations.  Also, for 
each face $\stdFace$ of $\stdCone$ we must know:
\begin{enumerate}
	\item $\spanVec \stdFace$, 
	\item at least one point $\stdInt \in \reInt \stdFace$,
	\item at least one point $\stdInt^* \in \reInt \stdFace^*$,
	\item whether $\stdFace$ is polyhedral or not.
\end{enumerate}
We remark that 
apart from  knowledge about the polyhedral faces, our assumptions are not very different from what it is 
usually assumed \emph{implicitly} in the FRA literature. 
For symmetric cones, which 
include direct products of $\PSDcone{n}$, $\SOC{n}$ and $\Re^n_+$, they are reasonable since their 
lattice of faces is well-understood and every face is again a symmetric cone. 
So, for instance, $\stdInt$ can be taken as the identity element for 
the corresponding Jordan algebra. On the other hand, if $\stdCone$ is, say, the copositive 
cone $\mathcal{C}^n$, we might have some trouble fulfilling the requirements, inasmuch  as our knowledge of 
the faces of $\mathcal{C}^n$ is still lacking.

\subsection{Distance to Polyhedrality}
Here we introduce the notion of \emph{distance to polyhedrality}. In what follows, 
if we have a chain of faces $\stdFace _1 \subsetneq \ldots \subsetneq \stdFace _\ell$, the 
length of the chain is defined to be $\ell$.
\begin{definition}
	Let $\stdCone$ be a nonempty closed convex cone. The \emph{distance to polyhedrality} $\distP(\stdCone)$ is the 
	length \emph{minus one} of the longest strictly ascending chain of nonempty faces $\stdFace _1 \subsetneq \ldots \subsetneq \stdFace _{\ell} $ which satisfies:
	\begin{enumerate}
		\item  $\stdFace _1$ is polyhedral;  
		\item $\stdFace _j$ is not polyhedral for $j > 1$.
	\end{enumerate}
\end{definition}

The distance to polyhedrality is a well-defined concept, because the lineality space
of $\stdCone$ is always a polyhedral face of $\stdCone$.
Moreover, $\distP(\stdCone)$ counts the maximum number of facial reduction steps that 
can be taken before we reach a polyhedral face.
\begin{example}\label{ex:psd_lorentz}
	See section 2 and examples 2.5 and 2.6 in \cite{pataki_handbook} for more details 
	on the facial structure of $\PSDcone{n}$ and $\SOC{n}$.
	For the positive semidefinite cone $\PSDcone{n}$, we have $\distP(\PSDcone{n}) = n-1$.
	Liu and Pataki defined in \cite{LP17} \emph{smooth cones} as 
	full-dimensional, pointed cones (i.e., $\stdCone \cap -\stdCone = \{0\}$) such that any face that is not $\{0\}$ nor $\stdCone$ must be a half-line. For those cones we have $\distP(\stdCone) = 1$, when the dimension of 
	$\stdCone$ is	greater than $2$. Examples of smooth 
	cones include the Lorentz cone $\SOC{n}$ and the $p$-cones, for $1 < p < \infty$. For comparison, 
	the longest chain of nonempty faces of $\PSDcone{n}$ has length $n+1$ and 
	the one for any smooth 
	cone with dimension greater than 2 has length $3$. Note that  $\PSDcone{n}$ and $\SOC{n}$ are examples 
	of \emph{symmetric cones} and we mention in passing that a discussion about the longest chain of faces of a general symmetric cone can be found in Section 5.3 of \cite{IL16}.
	
\end{example}
\subsection{Strict complementarity in \eqref{eq:red} and \eqref{eq:red_dual}}\label{sec:compl}
The last ingredient we need is a discussion on the cases where 
jumping to $\minFaceD$ with a single facial reduction step is possible. 
Let $x^*$, $y^*$ be any pair of optimal solutions to \eqref{eq:primal} and \eqref{eq:dual}. Recall that if $\stdCone$ is $\Re^n_+,\PSDcone{n}$ or 
$\SOC{n}$, then $x^*,y^*$ are said to be \emph{strictly complementary} if 
the following equivalent conditions hold:
$$
s^* \in \reInt (\stdCone \cap \{x^*\}^\perp) \Leftrightarrow x^* \in \reInt (\stdCone^* \cap \{s^*\}^\perp) \Leftrightarrow \inProd{x^*}{s^*} = 0 \text{ and } x^* + s^* \in \reInt \stdCone,
$$
where $s^* = c - \stdMap ^\T y^*$.
For general $\stdCone$, these equivalencies may not hold and we might need 
to distinguish between primal and dual strict complementarity, see 
for instance, Definition 3.4 and Remark 3.6 in the chapter by Pataki \cite{pataki_handbook} and Equation (2.6) in Section 2 of the work by Tun\c{c}el and Wolkowicz \cite{TW12}. 
Based on those references, we will say 
that \eqref{eq:red} and \eqref{eq:red_dual}  satisfies  
\emph{(dual) strict complementarity} if $\opt{\text{\ref{eq:red}}} = \opt{\text{\ref{eq:red_dual}}}$
and there are optimal solutions $(x^*, t^*,w^*), (y_1^*,y_2^*,y_3^*)$ such that
 \begin{align}
 cy_1^* -\stdInt y_2^* -\stdMap^\T y_3^* &\in \reInt (\stdCone \cap\{ x^*\}^\perp)  \label{eq:strict1}\\
 t^* +  1-y_1^*(1+\inProd{c}{\stdInt^*}) + \inProd{\stdInt^* }{\stdMap ^\T y_3^*}&> 0  \label{eq:strict2}\\
 w^* + y_1^*-y_2^* & > 0. \label{eq:strict3} 
 \end{align}
 \vspace{-1\baselineskip}
\begin{proposition}\label{prop:strict}
Suppose $\opt{\text{\ref{eq:red}}} = \opt{\text{\ref{eq:red_dual}}} = 0$  and that  we have  optimal solutions to \eqref{eq:red} and 
\eqref{eq:red_dual} satisfying dual strict complementarity. 
If $w^* = 0$, then $\minFaceD = \stdCone \cap\{ x^*\}^\perp$.
\end{proposition}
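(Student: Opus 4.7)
The plan is to combine the constraints of \eqref{eq:red} (applied at the optimum with $t^*=w^*=0$) with the strict complementarity conditions \eqref{eq:strict1}--\eqref{eq:strict3} to produce a feasible dual slack lying in the relative interior of $\stdCone \cap \{x^*\}^\perp$, and then invoke the standard ``minimal face containing a point'' characterization.

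First I would unpack the primal side. From $\opt{\text{\ref{eq:red}}} = 0$ we get $t^* = 0$, so combining $t^* = w^* = 0$ with \eqref{eq:red:1} and \eqref{eq:red:3} yields $\inProd{c}{x^*} = 0$ and $\stdMap x^* = 0$. By Lemma \ref{lemma:red2} case (b) we obtain $\feasS \subseteq \stdCone \cap \{x^*\}^\perp \subsetneq \stdCone$; in particular $\minFaceD \subseteq \stdCone \cap \{x^*\}^\perp$ since the right-hand side is a face of $\stdCone$ containing all feasible slacks.

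Next I would analyze the dual side to get the reverse inclusion. Since $\opt{\text{\ref{eq:red_dual}}}= 0$ we have $y_2^* = 0$, and \eqref{eq:strict3} (with $w^* = 0$) then forces $y_1^* > 0$. Equation \eqref{eq:strict1} becomes $c y_1^* - \stdMap^\T y_3^* \in \reInt(\stdCone \cap \{x^*\}^\perp)$; dividing by $y_1^*>0$ (which preserves the relative interior of a cone) yields
\[
s \;:=\; c - \stdMap^\T\!\bigl(y_3^*/y_1^*\bigr) \;\in\; \reInt\!\bigl(\stdCone \cap \{x^*\}^\perp\bigr).
\]
In particular $s$ is a feasible slack for \eqref{eq:dual}, so $s \in \feasS \subseteq \minFaceD$.

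Finally I would conclude with the minimal-face principle: the unique face of $\stdCone$ whose relative interior contains $s$ is exactly $\stdCone \cap \{x^*\}^\perp$, and any face of $\stdCone$ containing $s$ must contain this minimal face. Since $\minFaceD$ is itself a face of $\stdCone$ containing $s$, we get $\minFaceD \supseteq \stdCone \cap \{x^*\}^\perp$, which combined with the earlier inclusion gives equality. The main step to be careful about is confirming $y_1^* > 0$ so that the division is legitimate; once that is in hand the rest is a routine application of basic face-lattice facts and the strict complementarity relations.
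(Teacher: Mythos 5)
Your argument is essentially the paper's: extract $y_1^*>0$ from \eqref{eq:strict3} together with $y_2^*=\opt{\text{\ref{eq:red_dual}}}=0$ and $w^*=0$, then use \eqref{eq:strict1} to land $c-\stdMap^\T(y_3^*/y_1^*)$ in $\reInt(\stdCone\cap\{x^*\}^\perp)$; the paper stops there and leaves the two face-lattice inclusions implicit, whereas you spell them out, which is fine. One caution: you invoke Lemma \ref{lemma:red2} case (b) to get $\feasS\subseteq\stdCone\cap\{x^*\}^\perp$, but that lemma is stated under the specific choice $\stdInt\in(\reInt\stdCone^1)\times\{0\}$, $\stdInt^*\in\reInt\stdCone^*$, while Proposition \ref{prop:strict} is deliberately agnostic about $\stdInt,\stdInt^*$ (the paper even says so right after the proof) -- the inclusion you need should instead be read off directly from the \eqref{eq:red} constraints at optimality ($\stdMap x^*=0$, $\inProd{c}{x^*}=-w^*=0$, $x^*\in\stdCone^*$), which requires no hypothesis on $\stdInt,\stdInt^*$.
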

\begin{proof}
Since $w^* = 0$, \eqref{eq:strict3} implies that $y_1^* > y_2^*$. Then, $\opt{\text{\ref{eq:red}}} = \opt{\text{\ref{eq:red_dual}}} $ and 
$\opt{\text{\ref{eq:red}}} \geq 0 $ implies that $y_2^* \geq 0$, so that $y_1^* > 0$ as well.
Therefore, from \eqref{eq:strict1} we obtain $c -\stdMap^\T \frac{y_3^*}{y_1^*} \in \reInt (\stdCone \cap\{ x^*\}^\perp).$
\end{proof}

Therefore, under strict complementarity, we can find $\minFaceD$ with a single facial reduction step.
Note that, here, we do not care about the choice of $\stdInt,\stdInt^*$.
For semidefinite programming, a similar observation was made in Theorem 12.28 of \cite{csw13}, where reducing directions are found through an 
auxiliary problem (AP). There, the authors show that a single direction is needed if and only if their AP satisfy strict complementarity. Another characterization of when one direction is enough can be found in Theorem 4.1 of \cite{DPW15}. One small advantage of \eqref{eq:red} and \eqref{eq:red_dual}  is that only linear constraints are used in addition to the conic constraints induced by $\stdCone$. In contrast, AP also adds quadratic constraints.

\subsection{FRA-Poly}

Henceforth, we will assume that $\stdCone$ is the product of $r$ cones and we will 
write $\stdCone = \stdCone ^1 \times \ldots \times \stdCone  ^r $. Then, recall that 
if $\stdFace$ is face of $\stdCone$, we can write $\stdFace = \stdFace ^1 \times \ldots \times \stdFace  ^r$, where $\stdFace ^i$ is a face of $\stdCone ^i$ for every $i$.

Consider the following 
FRA variant, which we call FRA-Poly.

\noindent
[{\bf Facial Reduction Poly - Phase 1}]

{\bf Input:} \eqref{eq:dual}

{\bf Output:} A set of reducing directions $\{d_1, \ldots, d_\ell\}$. If 
\eqref{eq:dual} is feasible, it outputs  some face $\stdFace \subseteq \stdCone$ 
for which the PPS condition holds, together 
with a dual slack $s'$ for which $s_j' \in \reInt \stdFace ^j $ for every $j$ such that $\stdFace ^j$  is nonpolyhedral.
If \eqref{eq:dual} is infeasible, the directions form a certificate of infeasibility.
\begin{enumerate}
\item $\stdFace _1 \leftarrow \stdCone, i \leftarrow 1 $
\item Let  $(x^*,t^*,w^*)$ and $(y_1^*,y_2^*,y_3^*)$ be any pair of optimal solutions to 
\eqref{eq:red} and \eqref{eq:red_dual} with
\begin{itemize}
	\item $\stdFace _i$ in place of $\stdCone $, where $\stdFace_i = \stdFace_i ^1 \times \ldots \times \stdFace_i  ^r$,
	\item any $\stdInt^* \in \reInt \stdFace _i^*$,
	\item any $\stdInt$  such that $\stdInt _j = 0$ if $\stdFace _i^j$ is polyhedral and 	$\stdInt _j \in \reInt \stdFace _i^j$, otherwise.
\end{itemize}

\item If $t^* = 0$ and  $\inProd{c}{x^*} < 0$, let $\minFaceD \leftarrow \emptyset$ and stop. \eqref{eq:dual}  is infeasible.
\item If $t^* = 0$ and $\inProd{c}{x^*} = 0$, let $d_i \leftarrow x^*, \stdFace _{i+1}  \leftarrow \stdFace _i  \cap \{d_i\}^\perp, i\leftarrow i+1$ and return to 2).
\item If $t^* > 0$, $s' \leftarrow  c  -\stdMap^\T  \frac{y_3^*}{y_1^*}$, $\stdFace \leftarrow \stdFace _i$ and stop. {\PPS} condition is satisfied.
\end{enumerate}

Note that Phase 1 of FRA-Poly might not end at the minimal face, but still, 
due to Proposition \ref{prop:slater2}, strong 
duality will be satisfied. First, we will prove the correctness of Phase 1, 
which essentially follows from Lemma \ref{lemma:red2}.
\begin{proposition}\label{prop:phase1}
The following hold.
\begin{enumerate}[label=(\roman*)]
	\item if \eqref{eq:dual} is feasible, then the output face $\stdFace$ satisfies 
	$\minFaceD \subseteq \stdFace$. Moreover, $s'$ is a dual 
feasible slack such that $s_j' \in \reInt \stdFace ^j $  for every $j$ such that $\stdFace ^j$  is nonpolyhedral, i.e., the PPS condition is satisfied for $\stdFace$.

In this case, Phase 1  stops after finding at most $\sum _{i=1}^{r} \distP(\stdCone ^i) $ directions. 
	\item \eqref{eq:dual} is infeasible if and only if Step 3 is reached. In this case, Phase 1 stops after finding 
	at most $1+ \sum _{i=1}^{r} \distP(\stdCone ^i) $ directions.
\end{enumerate}
\end{proposition}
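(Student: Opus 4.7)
I would prove both parts together by combining a correctness invariant with a combinatorial bound on the iteration count. The main tool is Lemma~\ref{lemma:red2}, applied at each iteration after grouping the blocks of $\stdFace_i$ into the product $\stdFace_i^{NP}$ of its nonpolyhedral blocks and the product $\stdFace_i^{P}$ of its polyhedral ones. Since a product of polyhedral cones is polyhedral and faces of a product cone factor blockwise, this grouping puts us in the hypotheses of Lemma~\ref{lemma:red2}, and the choice of $\stdInt, \stdInt^*$ prescribed by Step 2 coincides with the one required by that lemma.

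First I would verify by induction on $i$ that $\feasS \subseteq \stdFace_i$ at every step. The base case is immediate. For the inductive step, Step 4 executes only when $t^* = 0$ and $\inProd{c}{x^*} = 0$, so alternative (b) of Lemma~\ref{lemma:red2}(ii) gives $\feasS \subseteq \stdFace_i \cap \{x^*\}^\perp = \stdFace_{i+1}$ (the feasible slacks of the program restricted to $\stdFace_i$ coincide with $\feasS$ thanks to the inductive hypothesis). With this invariant in hand, correctness at termination is immediate. If Step 5 is reached, Lemma~\ref{lemma:red2}(iii) supplies a slack $s'$ satisfying the {\PPS} condition for $\stdFace = \stdFace_i$, and $\minFaceD \subseteq \stdFace$ because $\stdFace$ is a face of $\stdCone$ containing $\feasS$. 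If Step 3 is reached, alternative (a) of Lemma~\ref{lemma:red2}(ii) together with the invariant forces $\feasS = \emptyset$, so \eqref{eq:dual} is infeasible. Conversely, if \eqref{eq:dual} is infeasible, Step 5 cannot be reached (a {\PPS} slack would be feasible), so termination must occur at Step 3.

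For the iteration bound, the key observation is that every execution of Step 4 strictly shrinks at least one nonpolyhedral block of $\stdFace_i$. Indeed, alternative (b) of Lemma~\ref{lemma:red2}(ii) asserts that the $\stdFace_i^{NP}$-part of $x^*$ is not in $(\stdFace_i^{NP})^\perp$; since this perp factors as the product of the $(\stdFace_i^j)^\perp$ over nonpolyhedral $j$, some nonpolyhedral block $j$ must satisfy $x^*_j \notin (\stdFace_i^j)^\perp$, hence $\stdFace_{i+1}^j \subsetneq \stdFace_i^j$. Letting $N_j$ denote the number of iterations at which block $j$ strictly decreases from a nonpolyhedral face, the total number of Step 4 reductions is at most $\sum_j N_j$.

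To conclude, I would show $N_j \leq \distP(\stdCone^j)$ by reversing the descending chain of distinct block-$j$ faces visited by the algorithm: the resulting ascending chain has its top $N_j$ elements nonpolyhedral. If the bottom element is polyhedral, the chain directly fits the $\distP$ definition; otherwise the bottom element is still nonpolyhedral, and I prepend its lineality space, which is a polyhedral face strictly smaller than it (since a nonpolyhedral face is never a subspace), obtaining an admissible chain of length $N_j + 2$. Either way $N_j \leq \distP(\stdCone^j)$. This gives the bound $\sum_j \distP(\stdCone^j)$ on the number of Step 4 reductions, establishing (i); in the infeasible case, one extra iteration is needed to reach Step 3 and produce the infeasibility direction, yielding the $1 + \sum_j \distP(\stdCone^j)$ bound in (ii). The main subtlety lies in this combinatorial step, specifically in handling the case where a block-$j$ face remains nonpolyhedral when the algorithm terminates; the lineality-prepending trick is what keeps the count aligned with $\distP(\stdCone^j)$.
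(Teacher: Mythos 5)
Your proof is correct and follows essentially the same strategy as the paper's: each Step 4 execution strictly shrinks at least one nonpolyhedral block via Lemma~\ref{lemma:red2}(ii)(b), the quantity $\distP(\stdCone^j)$ bounds how many times block $j$ can shrink from a nonpolyhedral face, and the infeasible case incurs one extra step to reach Step~3. Your per-block counting with the $N_j$'s, together with the lineality-prepending device, makes explicit the combinatorial fact that the paper only asserts ``by definition'' (that after $\sum_i \distP(\stdCone^i)$ directions the current face is polyhedral), so the two arguments are the same in substance even though yours directly bounds the Step~4 count rather than the state of the face.
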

\begin{proof}
We will focus on the statements about the bounds, since the other statements 
are direct consequences of 
Lemma \ref{lemma:red2}. Note that whenever Step 4 is reached, we have 
$\stdFace _{i+1} \subsetneq \stdFace _i$, since 
$x^*_j \not \in (\stdFace _i^j)^\perp$ for at least one nonpolyhedral cone $\stdFace _i^j$, due to item $(ii)$-$(b)$ of Lemma \ref{lemma:red2}. 
Therefore, whenever a new (proper) face is found, it is because 
we are making progress towards a polyhedral face for at least one nonpolyhedral cone. 	
	
By definition, after finding $\hat \ell = \sum _{i=1}^{r} (\distP(\stdCone ^i)) $ directions, 
$\stdFace _{\hat \ell + 1}$ is polyhedral. We now consider what happens if 
the algorithm has not stopped after all these directions were found.
In this case, when it is time to build \eqref{eq:red} and \eqref{eq:red_dual} with $\stdFace_{l+1}$ in
place of $\stdCone$ at Step 2, Phase 1 selects $e = 0$ and $\stdInt^* \in (\stdFace _{\hat \ell + 1})^*$.

First, suppose that \eqref{eq:dual} is feasible and  let 
$y$ be such that $c-\stdMap^\T y \in \feasS$. 
Since $\stdInt = 0$, $(\alpha, \alpha, \alpha y)$
is a feasible solution for \eqref{eq:red_dual}, when $\alpha>0$ is sufficiently small. It follows 
that $\opt{\text{\ref{eq:red_dual}}} > 0$ and that Phase 1 eventually reaches Step 5. This gives item $(i)$.

Finally, suppose that \eqref{eq:dual} is infeasible. By Lemma \ref{lemma:red2},  $\opt{\text{\ref{eq:red_dual}}} = 0$. Since $\stdInt = 0$,  \eqref{eq:red:2} implies that every optimal solution of 
\eqref{eq:red} will be a triple of the form
$(x^*,0,1)$, which implies that Step 3 will be reached and a single new direction will be 
added. This gives  item $(ii)$.
\end{proof}

\begin{remark}
Let $\ell$ be the number of directions found in 
Phase 1. 
When \eqref{eq:dual} is feasible, $\ell + 1$ is the total number of times the pair of problems \eqref{eq:red}, \eqref{eq:red_dual} are solved during Phase 1. After solving \eqref{eq:red}, \eqref{eq:red_dual} $\ell$ times, a face $\stdFace$ of $\stdCone$ satisfying the {\PPS} condition is computed. 
However, it is necessary to solve \eqref{eq:red}, \eqref{eq:red_dual} one extra time to reach the stopping criteria in Step 5 and obtain $s'$, which is a certificate that $\stdFace$ indeed satisfies the {\PPS} condition. 
When \eqref{eq:dual} is infeasible, $\ell$ coincides with the 
number of times problems \eqref{eq:red}, \eqref{eq:red_dual} are solved during Phase 1.
\end{remark}
If we merely want a face of $\stdCone$ satisfying the {\PPS} condition, we can stop at Phase 1. 
In any case, we will now show that is possible to jump directly to the minimal face in a single facial reduction step. 

[{\bf Facial Reduction Poly - Phase 2}]

{\bf Input:} \eqref{eq:dual}, the output of Phase 1: $\stdFace$ and $s'$, with $\stdFace \neq \emptyset$.

{\bf Output:} $\minFaceD$, a dual feasible slack $\hat s \in \reInt \minFaceD$ and, perhaps, 
an extra reducing direction $d$.
\begin{enumerate}
\item Let $\hat \stdCone = {\hat \stdCone} ^1\times \ldots \times {\hat \stdCone} ^r$ such that ${\hat \stdCone} ^j = \stdFace ^j$ if 
$\stdFace ^j$ is polyhedral and $\hat \stdCone ^j = \spanVec \stdFace^j$ otherwise. 
\item Let $(x^*,t^*,w^*)$ and $(y_1^*,y_2^*,y_3^*)$ be
any pair of \emph{strictly complementary} optimal solutions to \eqref{eq:red} and \eqref{eq:red_dual} with
\begin{itemize}
	\item $\hat \stdCone$ in place of $\stdCone$,
	\item any $\stdInt \in \reInt \hat \stdCone$,
	\item any $\stdInt^* \in \reInt {\hat \stdCone}^* $.
\end{itemize}
\item If $t^* = 0$, let $d \leftarrow x^*$, $\minFaceD \leftarrow \stdFace \cap \{x^*\}^\perp$.
Let $\tilde s$ be $c  -\stdMap^\T  \frac{y_3^*}{y_1^*} $. Then, we let 
$\hat s$ be a convex combination of $\tilde s$ and $s'$ such that $\hat s \in \reInt \minFaceD$ and stop.\footnote{\label{foot}In Proposition \ref{prop:min_face} it is shown that if $z_\beta = \beta s' + (1-\beta)\tilde s $ is such that $\beta \in (0,1)$ and $\beta$ is sufficiently close to $1$ then $z_\beta \in  \reInt \minFaceD$. Therefore, after determining $\minFaceD$, $\hat s$ can be found through a simple search procedure, e.g., the bisection method.} 
\item If $t^* > 0$,  $\minFaceD \leftarrow \stdFace$.
Let $\tilde s$ be $c  -\stdMap^\T  \frac{y_3^*}{y_1^*} $. Then, we let 
$\hat s$ be a convex combination of $\tilde s$ and $s'$ such that $\hat s \in \reInt \minFaceD$ and stop.\footref{foot}

\end{enumerate}

Note that in Phase 2, the cone $\hat{\stdCone}$ is polyhedral, therefore, 
both \eqref{eq:red_dual} and \eqref{eq:red} are 
polyhedral problems. Therefore,  strictly complementary solutions are ensured to exist, which is a 
consequence of Goldman-Tucker Theorem and also follows from the results of 
McLinden \cite{McLinden82} and Akg\"ul \cite{Akgul84}. 
We also remark that a strictly complementary solution of a polyhedral problem can be found by solving 
a single linear program, see, for instance, the article by Freund, Roundy and Todd \cite{FRT85} and the related 
work by Mehrotra and Ye \cite{MY93}.

We now try to motivate the next proposition. At Phase 2, a single facial reduction iteration is performed. In the usual facial reduction approach, we would build the problems \eqref{eq:red_dual} and \eqref{eq:red}
using $\stdCone = \stdFace$ and seek a reducing direction belonging to $\stdFace^*$. The subtle point in Phase 2 is that we
use $\hat \stdCone$ in place of $\stdFace$, which is potentially larger since the nonpolyhedral blocks were relaxed to their span. This restricts our search for reducing directions to 
$\hat \stdCone^*$, which is potentially smaller than $\stdFace^*$. However, the proof in Proposition \ref{prop:min_face} will show that, at this stage, 
any reducing direction must be already confined to $\hat \stdCone^*$.

\begin{proposition}\label{prop:min_face}
The output face of Phase 2 is $\minFaceD$ and there exists $\hat s$ as in Steps 3. and 4.
\end{proposition}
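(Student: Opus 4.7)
The plan is to split into cases according to whether the Phase 2 LP returns $t^* > 0$ or $t^* = 0$, in each case building a relative interior point of the minimal face as a convex combination $\hat s = \alpha \tilde s + (1-\alpha) s'$ of the LP slack $\tilde s = c - \stdMap^\T y_3^*/y_1^*$ and the Phase 1 witness $s'$. Two preliminary observations underpin both cases. First, $\feasS \subseteq \stdFace \subseteq \hat \stdCone$: blocks coincide on polyhedral indices, and on nonpolyhedral indices $\stdFace^j \subseteq \spanVec \stdFace^j = \hat\stdCone^j$. Hence \eqref{eq:dual} over $\hat\stdCone$ is feasible, ruling out the infeasibility branch of Lemma \ref{lemma:red2}(ii)(a). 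Second, $\hat\stdCone^* \subseteq \stdFace^*$, because on polyhedral $j$ the dual cones coincide, while on nonpolyhedral $j$ one has $(\hat\stdCone^j)^* = (\spanVec \stdFace^j)^\perp = (\stdFace^j)^\perp \subseteq (\stdFace^j)^*$. This last identity also means that for $x^* \in \hat\stdCone^*$ and every nonpolyhedral $j$, $\stdFace^j \cap \{x_j^*\}^\perp = \stdFace^j$, so the intersection $\stdFace \cap \{x^*\}^\perp$ can only shrink the polyhedral blocks.

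If $t^* > 0$, apply Lemma \ref{lemma:red2}(iii) with $\stdCone^1 = \hat\stdCone$ and a trivial polyhedral factor to obtain $y_1^* > 0$ and $\tilde s \in \reInt \hat\stdCone$, so that $\tilde s_j \in \reInt \stdFace^j$ on polyhedral blocks and $\tilde s_j \in \spanVec \stdFace^j$ on nonpolyhedral blocks. Choosing $\alpha \in (0,1)$ sufficiently small, on polyhedral blocks the standard fact $\alpha (\reInt C) + (1-\alpha) C \subseteq \reInt C$ yields $\hat s_j \in \reInt \stdFace^j$, and on nonpolyhedral blocks the point $s'_j \in \reInt \stdFace^j$ provided by Phase 1 absorbs the small perturbation $\alpha \tilde s_j \in \spanVec \stdFace^j$, again giving $\hat s_j \in \reInt \stdFace^j$. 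Thus $\hat s \in \reInt \stdFace$, and since $\hat s = c - \stdMap^\T y$ for a convex combination $y$ of the underlying dual solutions, $\hat s \in \feasS \subseteq \minFaceD \subseteq \stdFace$, forcing $\minFaceD = \stdFace$.

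If $t^* = 0$, the feasibility of the $\hat\stdCone$ problem together with Lemma \ref{lemma:red2}(ii) forces $\inProd{c}{x^*} = 0$, which via \eqref{eq:red:1} also gives $w^* = 0$. Proposition \ref{prop:strict} (together with $y_2^* = t^* = 0$) then produces $y_1^* > 0$ and $\tilde s \in \reInt(\hat\stdCone \cap \{x^*\}^\perp)$. Because $x^* \in \hat\stdCone^* \subseteq \stdFace^*$ and $x^* \in \ker \stdMap$, every $s \in \feasS$ satisfies $\inProd{s}{x^*} = \inProd{c}{x^*} = 0$, so $\stdFace \cap \{x^*\}^\perp$ is a face of $\stdCone$ containing $\feasS$ and $\minFaceD \subseteq \stdFace \cap \{x^*\}^\perp$. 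For the reverse inclusion, form $\hat s = \alpha \tilde s + (1-\alpha) s'$ and argue block-wise. On nonpolyhedral $j$, the preliminary observation reduces the intersection to $\stdFace^j$, and the previous case's argument gives $\hat s_j \in \reInt \stdFace^j$ for small $\alpha$. On polyhedral $j$, the identity $0 = \inProd{s'}{x^*} = \sum_j \inProd{s'_j}{x_j^*}$ is a sum of nonnegative terms, so each vanishes and in particular $s'_j \in \stdFace^j \cap \{x_j^*\}^\perp$; combined with $\tilde s_j \in \reInt(\stdFace^j \cap \{x_j^*\}^\perp)$, the same absorption fact yields $\hat s_j \in \reInt(\stdFace^j \cap \{x_j^*\}^\perp)$. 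Hence $\hat s \in \reInt(\stdFace \cap \{x^*\}^\perp) \cap \feasS$, and $\minFaceD = \stdFace \cap \{x^*\}^\perp$ follows.

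The main obstacle is the bookkeeping for the nonpolyhedral blocks, whose conic constraints were discarded when passing to $\hat\stdCone$. Two delicate points must work out: first, a reducing direction $x^*$ extracted from the polyhedral LP must automatically lie in the lineality space of $(\stdFace^j)^*$ on every nonpolyhedral $j$, so that the next facial reduction step does not spuriously shrink those blocks; second, the LP slack $\tilde s$, which may fail to belong to $\stdCone$ precisely in those same coordinates, must be repairable by convexly combining with the PPS witness $s'$ to produce a feasible slack in the relative interior of the target face. These two points are exactly the reason the algorithm is phrased in terms of $\hat\stdCone$ rather than $\stdFace$.
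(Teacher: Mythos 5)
Your proof is correct and uses the same essential ingredients as the paper's (Lemma~\ref{lemma:red2}, Proposition~\ref{prop:strict}, and the block-wise convex combination of $\tilde s$ with $s'$), but it is organized differently in a way worth noting. The paper argues by assuming $\stdFace \neq \minFaceD$, invoking Lemma~3.2 of Waki--Muramatsu to obtain a reducing direction $x \in \stdFace^*\cap\ker\stdMap\cap\{c\}^\perp$ with $x\not\in\stdFace^\perp$, and then showing via $\inProd{x}{s'}=0$ that $x$ is necessarily \emph{confined} to $\hat\stdCone^*$; this is what justifies that the polyhedral LP over $\hat\stdCone$ cannot miss the relevant reducing direction, and it forces $t^*=y_2^*=w^*=0$, after which the relative-interior slack is constructed. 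You instead case directly on the LP outcome and verify the conclusion in each branch: when $t^*>0$ you produce a slack in $\reInt\stdFace\cap\feasS$ (forcing $\stdFace=\minFaceD$), and when $t^*=0$ you show $x^*$ is a genuine reducing direction (via $\hat\stdCone^*\subseteq\stdFace^*$), that $\feasS\perp x^*$, and then construct a slack in $\reInt(\stdFace\cap\{x^*\}^\perp)\cap\feasS$. Your route trades the confinement argument for a direct verification that the $t^*>0$ branch already certifies minimality; the paper's route makes explicit why relaxing the nonpolyhedral blocks to their span loses nothing, which is the conceptual point the authors also flag in the paragraph preceding the proposition. Both are sound; one small cosmetic remark is that your application of Lemma~\ref{lemma:red2}(iii) with a trivial polyhedral factor in the $t^*>0$ case could be replaced by the paper's direct reading of constraint~\eqref{eq:red_dual:1} (using $y_1^*\geq y_2^*=t^*>0$ and $\stdInt\in\reInt\hat\stdCone$), which avoids a degenerate instance of the lemma's hypotheses.
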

\begin{proof}
Suppose that the output face $\stdFace$ of Phase 1 satisfies $\stdFace \neq \minFaceD$. 
By Lemma 3.2 in \cite{article_waki_muramatsu}, there is 
a reducing direction $x$ such that $x \in \stdFace ^* \cap \ker \stdMap \cap \{c\}^\perp$ 
and $x \not \in \stdFace ^\perp$.
Due to Proposition \ref{prop:phase1}, any such reducing direction $x$ satisfies $\inProd{x}{s'} = 0$, which implies that 
 $x_j \in ({\stdFace ^j})^\perp = ({\spanVec \stdFace ^j})^\perp$ for every $j$ such that $\stdFace ^j$ is not polyhedral, 
 since $s_j' \in \reInt \stdFace ^j $ for those $j$.
Therefore, the possible reducing directions are confined to the polyhedral cone $\hat \stdCone^*$,
where $\hat \stdCone$ is the cone in Step 1. of Phase 2.

Since $\hat \stdCone$ is polyhedral, the problems \eqref{eq:red} and \eqref{eq:red_dual} 
 are polyhedral and they admit strictly complementary optimal 
solutions $(x^*,t^*,w^*)$, $(y_1^*,y_2^*,y_3^*)$.
The fact that $x \not \in \stdFace^\perp$ implies that $\inProd{\stdInt}{x}\neq 0$ so that $(x/\inProd{\stdInt}{x},0,0) $ is an optimal 
solution to \eqref{eq:red}. Therefore, $t^* = y_2^* = 0$. Moreover, since \eqref{eq:dual} is 
feasible, we have $w^* = 0$.
By Proposition \ref{prop:strict}, we have 
$c -\stdMap^\T  \frac{y_3^*}{y_1^*} \in \reInt (\hat \stdCone \cap \{x^*\}^\perp).$

Let $\tilde s = c -\stdMap^\T  \frac{y_3^*}{y_1^*}$. Note that $\stdFace \cap \{x^*\}^\perp$ is a face of 
$\stdFace$ containing $\feasS$, since we argued that $x^*$ must be a reducing direction. We will prove that $\minFaceD = \stdFace \cap \{x^*\}^\perp$ by 
 showing that some convex combination of $s'$ and $\tilde s$ lies in $\reInt (\stdFace \cap \{x^*\}^\perp)$. 

Let $z_\beta = \beta s' + (1-\beta)\tilde s$. For 
all $\beta \in (0,1)$ and all $j$ such that $\stdFace ^j$ is polyhedral, we 
have $(z_\beta)_j \in \reInt ( \stdFace ^j  \cap \{x_j^*\}^\perp)$, because $\tilde s _j \in  \reInt (\stdFace ^j  \cap \{x_j\}^\perp)$ and 
$s'$ is feasible.
If $\stdFace ^j$ is not polyhedral, then $\stdFace ^j  \cap \{x_j^*\}^\perp = \stdFace ^j$, 
since $x_j \in (\stdFace ^j)^\perp$. Because $\tilde s _j \in \spanVec{\stdFace ^j}$ and 
$s_j' \in \reInt \stdFace^j$, for all $\beta $ sufficiently close to $1$ we have 
$(z_\beta)_j \in \reInt \stdFace^j$. Therefore, it is possible to select $\beta \in (0,1)$ 
such that $(z_\beta)_j \in \reInt (\stdFace ^j  \cap \{x_j\}^\perp) $ for all $j$. This 
shows that $\minFaceD = \stdFace \cap \{x^*\}^\perp $.

If $\stdFace$ was already the minimal face to begin with, then 
$t^* > 0$. We can then  proceed in a 
similar fashion. The only difference is that due to \eqref{eq:red_dual:1}, we 
will have that $\tilde s = c -\stdMap^\T  \frac{y_3^*}{y_1^*} $ satisfies 
$\tilde s _j \in  \reInt (\stdFace ^j)$ for every $j$ such that $\stdFace ^j$ is polyhedral.
And as before, we can select a convex combination of $s'$ and $\tilde s$ belonging to 
the relative interior of $\minFaceD$.
\end{proof}

We then arrive at the main result of this section.
\begin{theorem}\label{theo:fra_bound}
Let $\stdCone = \stdCone ^1\times \ldots \times \stdCone^r$. The minimum face $\minFaceD$ 
that contains the feasible region of \eqref{eq:dual} can be found in no more than 
$1+ \sum _{i=1}^{r} \distP(\stdCone ^i) $ facial reduction steps.
\end{theorem}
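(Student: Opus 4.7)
My plan is to combine the bounds obtained for Phase 1 and Phase 2 of FRA-Poly, and handle the feasible and infeasible cases separately.

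First, I would split into two cases based on feasibility of \eqref{eq:dual}. In the infeasible case, Proposition \ref{prop:phase1}(ii) directly gives the bound: Phase 1 terminates at Step 3 after producing at most $1 + \sum_{i=1}^r \distP(\stdCone^i)$ reducing directions, and by convention $\minFaceD = \emptyset$ is identified at that point, so no further steps are needed. This case is immediate.

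In the feasible case, Proposition \ref{prop:phase1}(i) guarantees that Phase 1 finds at most $\sum_{i=1}^r \distP(\stdCone^i)$ reducing directions and terminates at a face $\stdFace$ (not necessarily the minimal face) satisfying the {\PPS} condition, together with a dual slack $s'$ whose nonpolyhedral blocks lie in the relative interiors of the corresponding blocks of $\stdFace$. I would then invoke Proposition \ref{prop:min_face}: Phase 2, which consists of solving a single polyhedral pair \eqref{eq:red} and \eqref{eq:red_dual} over $\hat\stdCone$, produces at most one additional reducing direction $d$ (namely, $d = x^*$ when $t^* = 0$; no new direction when $t^* > 0$), and the resulting face is exactly $\minFaceD$. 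Summing, the total number of facial reduction steps is at most $1 + \sum_{i=1}^r \distP(\stdCone^i)$.

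The only subtle point is making sure both cases are subsumed by the same bound. In the infeasible case, Phase 2 is never entered (Phase 1 stops at Step 3), so the bound from Proposition \ref{prop:phase1}(ii) alone suffices. In the feasible case, the $+1$ term in $1 + \sum_{i=1}^r \distP(\stdCone^i)$ corresponds to the extra direction potentially produced in Phase 2, rather than the infeasibility-detection step. Since both possibilities are upper-bounded by the same quantity, the statement follows. No real obstacle here — the work has been done in Propositions \ref{prop:phase1} and \ref{prop:min_face}; the theorem is just a clean summary that combines the two phases.
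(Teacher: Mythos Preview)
Your proposal is correct and follows essentially the same argument as the paper: split into the infeasible case (handled directly by Proposition~\ref{prop:phase1}(ii)) and the feasible case (Proposition~\ref{prop:phase1}(i) for Phase~1 plus Proposition~\ref{prop:min_face} for the single extra step in Phase~2). The paper's proof is terser but structurally identical, so there is nothing to add.
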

\begin{proof}
If \eqref{eq:dual} is infeasible, then $\minFaceD = \emptyset$ and the result 
follows from Proposition \ref{prop:phase1}. So suppose now that 
\eqref{eq:dual} is feasible. Then Phase 1 ends after finding at most 
$\sum _{i=1}^{r} \distP(\stdCone ^i)$ directions. Due to Proposition \ref{prop:min_face}, at most one 
extra direction is needed to jump to the minimal face.
\end{proof}


Recall that $\ell _\stdCone$ is the length of the longest chain of 
strictly ascending nonempty faces of $\stdCone$.
If one uses the ``classical'' facial reduction approach, it takes no more than $\ell _\stdCone - 1$
facial reduction steps to find the minimal face, when \eqref{eq:dual} is feasible. 
See, for instance, Theorem 1 in \cite{pataki_strong_2013} or  Corollary 3.1 in \cite{article_waki_muramatsu}.
When $\stdCone$ is a direct product of several 
cones, we have $\ell _\stdCone  = 1 + \sum _{i=1}^{r} (\ell _{\stdCone^i} -1)$. We will 
end this subsection by showing that, under the relatively weak hypothesis 
that $\stdCone^i$ is not a subspace, we have $\distP(\stdCone ^i) < \ell _{\stdCone^i } - 1$. This means that FRA-Poly compares favorably to the classical FRA analysis  and the 
difference between the two bounds grows at least linearly with the number of cones.

\begin{theorem}\label{theo:dist_comp2}
If $\stdCone$ is not a subspace then $1 + \distP(\stdCone) \leq \ell _{\stdCone }-1$.
In particular, if $\stdCone $ is the direct product of $r$ closed convex 
cones that are not subspaces we have:
\begin{equation*}
 1 + r + \sum _{i=1}^{r} \distP(\stdCone ^i)   \leq 1 + \sum _{i=1}^{r} (\ell _{\stdCone^i} -1).
\end{equation*}
\end{theorem}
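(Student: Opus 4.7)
The plan is to first establish the single-cone inequality $1 + \distP(\stdCone) \leq \ell_\stdCone - 1$; the product version then follows by summing $1 + \distP(\stdCone^i) \leq \ell_{\stdCone^i} - 1$ over $i = 1,\ldots,r$ (each $\stdCone^i$ being a non-subspace by hypothesis) and adding $1$ to both sides, so I will focus on the single-cone claim.

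Let $\stdFace_1 \subsetneq \cdots \subsetneq \stdFace_\ell$ be a chain realizing $\distP(\stdCone)$, so that $\ell = 1 + \distP(\stdCone)$, $\stdFace_1$ is polyhedral, and $\stdFace_j$ is nonpolyhedral for every $j \geq 2$. Let $L = \lineality \stdCone$, which is itself a polyhedral face of $\stdCone$ and is contained in every face. I will extend the chain above to a strictly ascending chain of nonempty faces of $\stdCone$ of length at least $\ell + 1$, which forces $\ell_\stdCone \geq 2 + \distP(\stdCone)$, as required. The extension is done by case analysis. If $\stdFace_\ell \subsetneq \stdCone$, append $\stdCone$. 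Otherwise, if $\stdFace_1 \supsetneq L$, prepend $L$. The remaining case is $\stdFace_1 = L$ and $\stdFace_\ell = \stdCone$; since $\stdCone$ is not a subspace, $L \neq \stdCone$, so $\ell \geq 2$ and $\stdFace_2$ is a well-defined nonpolyhedral face.

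To handle this last case, I will insert a face strictly between $L$ and $\stdFace_2$. Since $\stdFace_2$ is a face of $\stdCone$, we have $\lineality \stdFace_2 \subseteq \lineality \stdCone = L$, and since $L \subseteq \stdFace_2$ is a subspace, in fact $\lineality \stdFace_2 = L$. If $\dim \stdFace_2$ were $\dim L + 1$, then $\stdFace_2 = L + \Re_+ v$ for any $v \in \stdFace_2 \setminus L$, making $\stdFace_2$ polyhedral, a contradiction. Hence $\dim \stdFace_2 \geq \dim L + 2$, so the quotient $\stdFace_2 / L$ is a nontrivial pointed closed convex cone of dimension at least $2$; it therefore admits an extreme ray, whose preimage under the quotient map is a face $F$ of $\stdFace_2$ with $L \subsetneq F \subsetneq \stdFace_2$. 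Since faces of faces are faces, $F$ is a face of $\stdCone$, and inserting it yields a strictly ascending chain of length $\ell + 1$.

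The main obstacle is this third case, where $\stdFace_1 = L$ and $\stdFace_\ell = \stdCone$ simultaneously: we cannot extend outward and must produce a face from within the chain. The key observation unlocking it is that nonpolyhedrality of $\stdFace_2$ forces $\dim \stdFace_2 \geq \dim L + 2$, which is exactly what allows an extreme ray of $\stdFace_2 / L$, and hence an intermediate face of $\stdCone$, to exist.
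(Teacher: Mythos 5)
Your proof is correct, and the core mechanism is the same as the paper's: use the lineality space as the minimal (polyhedral) face, and use the fact that a pointed closed convex cone of dimension at least two has a proper face that is an extreme ray to insert an extra link in the chain. The organizational difference is that the paper performs a \emph{global} reduction to the pointed cone $\hat\stdCone = \stdCone \cap (\lineality\stdCone)^\perp$, establishes a bijection between faces (and polyhedral faces) of $\stdCone$ and of $\hat\stdCone$ so that $\ell_{\stdCone}=\ell_{\hat\stdCone}$ and $\distP(\stdCone)=\distP(\hat\stdCone)$, and then works entirely inside $\hat\stdCone$; you instead work directly with $\stdCone$ and only pass modulo $L$ \emph{locally}, and only in the one nontrivial case ($\stdFace_1 = L$, $\stdFace_\ell = \stdCone$). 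Your explicit three-way case split (append $\stdCone$; prepend $L$; insert between $L$ and $\stdFace_2$) spells out details that the paper compresses into the remark that any longest chain of a pointed cone must begin with $\{0\}$ followed by an extreme ray, and your argument that nonpolyhedrality of $\stdFace_2$ forces $\dim\stdFace_2 \geq \dim L + 2$ is the same dimension-count that underlies the paper's appeal to extreme rays. What you gain is a more elementary, self-contained exposition that avoids the face-bijection lemma; what the paper's formulation buys is a cleaner conceptual reduction ``without loss of generality, $\stdCone$ is pointed'' that handles all the case analysis implicitly.
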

\begin{proof}
Let $U = \lineality \stdCone$. Then we have $\stdCone = (\stdCone\cap U^\perp) + U$.
If we let $\hat \stdCone =  \stdCone\cap (U^\perp) $, we have that 
$\lineality (\hat \stdCone ) = \{0\}$ so that $\hat \stdCone$ is pointed and 
$\hat \stdCone \neq \{0\}$ if $\stdCone$ is not a subspace. Recall that the minimal nonzero face of any nonzero 
pointed cone must be an extreme ray, i.e., an one dimensional face.
Therefore,  the first two faces of any longest chain of faces of $\hat \stdCone$ must be $\{0\}$ 
and some extreme ray. Therefore, we have 
$1 + \distP(\hat \stdCone) \leq \ell _{\hat \stdCone} -1 $. 

Note that there is a bijection 
between the faces of $\stdCone$ and the set $\{\stdFace + U \mid \stdFace \text{ is a face of } \hat \stdCone \}$. A similar 
 correspondence holds between the polyhedral faces of $\stdCone$  and the set $\{\stdFace + U \mid \stdFace \text{ is a polyhedral face of } \hat \stdCone \}$.
 Therefore, $\ell _\stdCone = \ell _{\hat \stdCone}$ and $ \distP(\stdCone) = \distP(\hat \stdCone)$.
 This shows that $1 + \distP(\stdCone)  \leq \ell _{\stdCone }-1$. 
 To conclude, note that if $\stdCone$ is a direct product of $r$ cones then
 $\ell _{\stdCone} = 1 + \sum _{i=1}^{r} (\ell _{\stdCone^i} -1)$, so the result 
 follows from applying what we have done so far to each $\stdCone^i$.
\end{proof}

\subsection{Tightness of the bound}\label{sec:tight}
It is 
reasonable to consider whether there are instances that actually need 
the amount of steps predicted by Theorem \ref{theo:fra_bound}. In this section we will 
take a look at this issue. The following notion will be helpful.

\begin{definition}[Singularity degree]\label{def:sing}
	Consider the set  of possible outputs  $\{d_1,\ldots$ $, d_\ell\}$ of 
	the Generic Facial Reduction algorithm in Section \ref{sec:fra}.
	The singularity degree of \eqref{eq:dual} is the minimum $\ell$ among 
	all the possible outputs and is denoted by $d(D)$.
\end{definition}

That is, the singularity degree is the minimum number of facial reduction steps before $\minFaceD$ is found. In the recent work by Liu and Pataki \cite{LP17}, there is also an equivalent definition of singularity degree for 
feasible problems, see Definition 6 therein.
As far as we know, the expression ``singularity degree'' in this context is due to 
Sturm  in \cite{sturm_error_2000}, where he showed the connection between the singularity 
degree of a positive semidefinite program and  error bounds, see also \cite{sturm_handbook}.

The singularity degree of \eqref{eq:dual} is a quantity that depends on 
$c, \stdMap$ and $\stdCone$. The classical facial reduction 
strategy gives the bounds $d(D) \leq \ell_\stdCone -1$ when \eqref{eq:dual} is 
feasible and $d(D) \leq \ell_\stdCone$ when \eqref{eq:dual} is infeasible.
Theorem \ref{theo:fra_bound} readily implies that 
$d(D) \leq 1+ \sum _{i=1}^{r} \distP(\stdCone ^i)$, no matter whether \eqref{eq:dual} is 
feasible or not. Due to Theorem \ref{theo:dist_comp2}, this bound is likely 
to compare favorably to  $\ell_\stdCone -1 =  \sum _{i=1}^{r} (\ell _{\stdCone^i} -1)$.

Tun\c{c}el constructed an SDP instance with singularity degree $d(D) = n-1 = \ell _{\PSDcone{n}}-2$, 
see Section 2.6 in \cite{tuncel_polyhedral_2010} or the section ``Worst case instance'' in \cite{csw13}. Now, 
let 
$\stdCone = \SOC{t_1} \times \ldots\times \SOC{t_{r_1}} \times \PSDcone{n_1}  \times \ldots \times \PSDcone{n_{r_2}}$ be
the direct product of $r_1$ second order (Lorentz) cones and $r_2$ positive semidefinite cones.
In this case, Theorem \ref{theo:fra_bound} implies that
$
d(D) \leq 1 + r_1 + \sum _{j = 1}^{r_2} (n_j  - 1).
$ In Appendix \ref{app:ex} we  extend Tun\c{c}el's example and show that for every such $\stdCone$ there is a feasible instance for which 
$ d(D) = r_1 + \sum _{j = 1}^{r_2} (n_j  - 1)$, thus showing the worst case bound in Theorem \ref{theo:fra_bound} is off by at most one. 

This type of $\stdCone$ was also studied by Luo and Sturm in \cite{sturm_handbook}, 
where they discussed a regularization procedure which ends in at most $r_1 + \sum _{j = 1}^{r_2} (n_j  - 1)$ steps, see Theorem 7.4.1 therein. However, their definition of regularity does not imply strong feasibility, so similarly to Phase 1 of FRA-Poly, an additional step is necessary  (akin to a facial reduction step)  before the minimal face is reached, see Lemma 7.3.3. In total we get the same 
bound predicted by Theorem \ref{theo:fra_bound}.

We remark that we were unable to  construct a feasible instance with singularity degree $1+r_1 + \sum _{j = 1}^{r_2} (n_j  - 1)$. Note that if $\stdCone = \PSDcone{n}$, 
since each facial reduction step reduces the possible ranks of feasible matrices, if we need 
$n$ steps it is because $\feasS = \{0\}$. But if $\feasS = \{0\}$, then $c \in \matRange \stdMap^\T$ and Gordan-Stiemke's Theorem implies
the existence of $d \in (\reInt \stdCone^*) \cap \ker \stdMap$. Therefore, we can go to $\minFaceD$ with a single step, since 
$\stdCone \cap \{d\}^\perp = \{0\}$. So, in fact, 
we never need more than $n-1$ steps for feasible SDPs and Tun\c{c}el's example is indeed the worse that could happen in this case. 
A similar argument holds when 
$\stdCone = \SOC{n}$, where we never need more than a single step if we select the directions optimally.
But when we have direct products,  the possible interactions between the blocks makes it hard to argue 
that the $+1$ is unnecessary, although the partial polyhedral Gordan-Stiemke theorem (Corollary \ref{col:partial_gordan})  helps rule out a few 
cases.

 We will now take a  look at what could be done  to ensure that a facial reduction 
algorithm never takes more steps than the necessary to find $\minFaceD$. Consider the Generic Facial Reduction algorithm in Section \ref{sec:fra}. All the 
directions, with the possible exception of the last, belong to $\stdFace_i^*\cap \ker \stdMap \cap \{c\}^\perp$.
In particular, the FRAs considered in \cite{sturm_error_2000,sturm_handbook} and the FRA-CE variant in \cite{article_waki_muramatsu} always select the most interior direction 
possible. In our context, this means that whenever step $2.i)$ is reached the following choice is made:
\begin{equation}
d_i \in \reInt (\stdFace _i ^*\cap \ker \stdMap \cap \{c\}^\perp). \label{eq:dir_ri}
\end{equation}
In fact,  the singularity degree was originally  defined not as in Definition \ref{def:sing},
but as the number of steps that 
their particular algorithms take to find the minimal face\footnote{There is a minor incompatibility between the two definitions. Sturm considered that a problem with $\feasS = \{0\}$ has singularity degree $0$, see Step 1 in Procedure 1 in \cite{sturm_error_2000}. 
According to the 
definition in \cite{LP17} and our own, such a problem would have singularity degree $1$.}. Although intuitive, 
it is not entirely obvious that the choice in \eqref{eq:dir_ri} minimizes the number of directions,
so let us take a look at this issue.

\begin{proposition}
Suppose that \eqref{eq:dual} is feasible and that at each step of the Generic Facial 
Reduction algorithm $d_i$ is selected as in \eqref{eq:dir_ri}. Then, the algorithm 
will output exactly $d(D)$ directions.
\end{proposition}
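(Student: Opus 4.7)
The plan is to show that the greedy choice produces the smallest possible current face at every step, so that the greedy sequence of faces is pointwise contained in the sequence produced by any other valid execution. Combined with the definition of $d(D)$ as the minimum over all executions, this forces the greedy sequence to use exactly $d(D)$ directions.

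The enabling observation I would prove first is the following. Let $\stdFace$ be the current face and set $R = \stdFace^* \cap \ker \stdMap \cap \{c\}^\perp$, the cone of admissible reducing directions. Since $R \subseteq \stdFace^*$, taking duals yields $\stdFace \subseteq R^*$. By the standard relative interior property of dual cones, whenever $d \in \reInt R$ and $x \in R^*$ satisfies $\inProd{x}{d}=0$, one has $x \in R^\perp$. Consequently $\stdFace \cap \{d\}^\perp = \stdFace \cap R^\perp$. On the other hand, for any $d' \in R$, $\{d'\}^\perp \supseteq R^\perp$, so $\stdFace \cap \{d'\}^\perp \supseteq \stdFace \cap R^\perp$. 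In other words, the relative interior choice \eqref{eq:dir_ri} produces the smallest face reachable by a single facial reduction step from $\stdFace$.

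Next I would establish by induction on $k$ that $\stdFace_k^g \subseteq \stdFace_k$, where $\stdFace_k^g$ is the greedy face at step $k$ and $\stdFace_k$ is the face produced by any other valid sequence of choices up to the same step. The base case is trivial. For the inductive step, write $R_k^g$ and $R_k$ for the admissible reducing direction cones at $\stdFace_k^g$ and $\stdFace_k$ respectively. The inclusion $\stdFace_k^g \subseteq \stdFace_k$ gives $(\stdFace_k^g)^* \supseteq (\stdFace_k)^*$, hence $R_k^g \supseteq R_k$. Therefore any $d_k \in R_k$ used by the comparison sequence lies in $R_k^g$, so $\{d_k\}^\perp \supseteq (R_k^g)^\perp$, and hence
\[
\stdFace_{k+1}^g \;=\; \stdFace_k^g \cap (R_k^g)^\perp \;\subseteq\; \stdFace_k \cap \{d_k\}^\perp \;=\; \stdFace_{k+1},
\]
closing the induction. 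The subtle point here, which I expect to be the main obstacle, is that the admissible cones $R_k^g$ and $R_k$ differ (they depend on the current face), so one cannot compare the two sequences directly face-by-face; it is the monotonicity $R_k^g \supseteq R_k$ inherited from $\stdFace_k^g \subseteq \stdFace_k$ that bridges the two sequences.

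To conclude, I would fix any execution attaining the minimum, so that $\stdFace_{d(D)+1} = \minFaceD$. The induction then gives $\stdFace_{d(D)+1}^g \subseteq \minFaceD$. On the other hand every greedy face contains $\feasS$, because each reducing direction lies in $\ker \stdMap \cap \{c\}^\perp$ and is therefore orthogonal to every feasible slack $c - \stdMap^\T y$. Hence $\feasS \subseteq \stdFace_{d(D)+1}^g$, and the minimality of $\minFaceD$ forces $\stdFace_{d(D)+1}^g = \minFaceD$. Thus greedy reaches $\minFaceD$ in at most $d(D)$ steps, and by the definition of $d(D)$ as a minimum it cannot finish in fewer, so it outputs exactly $d(D)$ directions.
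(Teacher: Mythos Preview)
Your argument is correct, and it takes a genuinely different route from the paper's proof. The paper fixes a minimum-length sequence $(d_1,\ldots,d_\ell)$ with $\ell=d(D)$ and performs an \emph{exchange argument}: it shows that replacing $d_1$ by any $d_1^\ast\in\reInt(\stdCone^*\cap\ker\stdMap\cap\{c\}^\perp)$ yields $\stdCone\cap\{d_1^\ast\}^\perp\subseteq\stdCone\cap\{d_1\}^\perp$, so $(d_1^\ast,d_2,\ldots,d_\ell)$ is still a valid chain reaching $\minFaceD$, and then iterates. Your approach instead proves the stronger invariant $\stdFace_k^g\subseteq\stdFace_k$ for \emph{every} execution and \emph{every} $k$, via the clean identity $\stdFace\cap\{d\}^\perp=\stdFace\cap R^\perp$ for $d\in\reInt R$. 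What you gain is a uniform, face-level comparison rather than a direction-by-direction replacement; in particular, your first observation immediately yields the uniqueness of the greedy chain of faces (independent of which relative interior point is picked), a remark the paper derives only afterwards. One small point you leave implicit but which follows from your first observation: whenever $\stdFace_k^g\neq\minFaceD$ some admissible $d\notin(\stdFace_k^g)^\perp$ exists, and then $\stdFace_k^g\cap(R_k^g)^\perp\subseteq\stdFace_k^g\cap\{d\}^\perp\subsetneq\stdFace_k^g$ forces the greedy choice $d_k^g\in\reInt R_k^g$ out of $(\stdFace_k^g)^\perp$ as well, so the greedy step is indeed valid until $\minFaceD$ is reached.
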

\begin{proof}
Suppose $d(D) > 0$ and let $(d_1,\ldots, d_\ell)$ be a sequence of reducing directions such that 
$\ell = d(D)$ and the last face is $\minFaceD$. Let $d_1^* \in \reInt (\stdCone ^*\cap \ker \stdMap \cap \{c\}^\perp)$. 

 Since $d_1^*$ is a relative interior point, there is  $\alpha > 1$ such 
 that $\alpha d_1^* + (1-\alpha)d_1 \in \stdCone^*\cap \ker \stdMap \cap \{c\}^\perp$, see Theorem 6.4 in \cite{rockafellar}.
 Now, let  $x \in \stdCone \cap \{d_1^*\}^\perp $. We must have 
 $$
 \inProd{x}{\alpha d_1^* + (1-\alpha)d_1 } \geq 0. 
 $$
 Since $\inProd{x}{d_1^*} = 0$ and $(1-\alpha) < 0$, we have $\inProd{x}{d_1} = 0$ as well. That is, we have
\begin{equation}
\stdCone \cap \{d_1^*\}^\perp \subseteq \stdCone \cap \{d_1\}^\perp.\label{eq:prop:ri_dir}
\end{equation}
(Note that this shows that if $d_1 \not \in \stdCone ^\perp$ then $d_1^* \not \in \stdCone^\perp$ as well.)
Since taking the dual cone inverts the containment, we have 
$$
(\stdCone \cap \{d_1^*\}^\perp \cap \cdots \cap \{d_i\}^\perp)^* \supseteq (\stdCone \cap \{d_1\}^\perp \cap \cdots \cap \{d_i\}^\perp)^*,
$$
for every $i$. Therefore, $(d_1^*,\ldots, d_\ell)$ is still a valid sequence of reducing directions for 
\eqref{eq:dual} and the corresponding chain of faces  still ends in the minimal face, due to \eqref{eq:prop:ri_dir}. 
Likewise, we substitute $d_2$ by $d_2 ^*$ following \eqref{eq:dir_ri} with 
$\stdFace _2 = \stdCone \cap \{d_1^*\}^\perp$ and proceed 
inductively. This shows that selecting according to \eqref{eq:dir_ri} does indeed 
produce the least number of directions.
\end{proof}

We remark that the argument that leads to \eqref{eq:prop:ri_dir} also shows that if $d_1$ was already chosen according to 
\eqref{eq:dir_ri}, we would have in fact $\stdCone \cap \{d_1^*\}^\perp = \stdCone \cap \{d_1\}^\perp$. So that if we use 
the choice in \eqref{eq:dir_ri} the resulting chain of faces is unique even if the directions themselves are not.

For some cases, we can expect to implement the choice in \eqref{eq:dir_ri}. 
If  \eqref{eq:dual} and \eqref{eq:primal} are both 
strongly feasible and $\stdCone = \PSDcone{n}$, then it is known that the central path converges 
to a solution that is a relative interior point of the set of optimal solutions \cite{KRT97} and 
the facial reduction approach in \cite{PFA15} uses this fact in an essential way.
Therefore, the choice in \eqref{eq:dir_ri} might be implementable in the context of 
interior point methods although it is not known whether for other algorithms, say augmented Lagrangian methods, a similar property holds.
Still, as interior point methods are very revelant to conic linear programming, one of the referees prompted us to prove the following. 

\begin{proposition}
Let $\stdInt \in \stdCone, \stdInt^* \in \stdCone$ and 
let $\Omega$ denote the optimal solution set of \eqref{eq:red}.
Let  $(x^*,t^*,w^*) \in \reInt \Omega$. If $t^* = w^* = 0$, then  
$$
x^* \in \reInt (\stdCone ^*\cap \ker \stdMap \cap \{c\}^\perp).
$$
\end{proposition}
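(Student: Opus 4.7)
The approach has three stages: describe $\Omega$ concretely; use $w^*=0$ together with $(x^*,0,0)\in\reInt\Omega$ to force every element of $\Omega$ to satisfy $w=0$; and then transfer the relative-interior property from the resulting base slice up to the cone $K:=\stdCone^*\cap\ker\stdMap\cap\{c\}^\perp$.

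First, because $t^*=0$ is attained and $t\ge0$ is a constraint, the optimal value of \eqref{eq:red} is $0$ and $t=0$ throughout $\Omega$. Substituting $t=0$ into \eqref{eq:red:1} gives $w=-\inProd{c}{x}$, so that
\begin{equation*}
\Omega=\bigl\{(x,0,-\inProd{c}{x}) : x\in\stdCone^*,\ \stdMap x=0,\ \inProd{\stdInt}{x}-\inProd{c}{x}=1,\ \inProd{c}{x}\le 0\bigr\}.
\end{equation*}

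Next I would argue that in fact $w=0$ on all of $\Omega$. Suppose, for contradiction, $(\bar x,0,\bar w)\in\Omega$ with $\bar w>0$. The affine line $L(\alpha):=(1-\alpha)(\bar x,0,\bar w)+\alpha(x^*,0,0)$ lies in $\affineHull\Omega$ for every $\alpha\in\Re$; its $w$-coordinate equals $(1-\alpha)\bar w$, which is strictly negative for $\alpha$ just above $1$. Since $L(1)=(x^*,0,0)\in\reInt\Omega$ must admit an $\affineHull\Omega$-neighborhood contained in $\Omega$, this violates $w\ge 0$. Hence $\inProd{c}{x}=0$ on $\Omega$ and \eqref{eq:red:2} gives $\inProd{\stdInt}{x}=1$. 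Setting
\begin{equation*}
S:=\stdCone^*\cap\ker\stdMap\cap\{c\}^\perp\cap\{x:\inProd{\stdInt}{x}=1\},
\end{equation*}
the map $(x,0,0)\mapsto x$ is an affine isomorphism of $\Omega$ onto $S$, so $x^*\in\reInt S$.

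Finally, I would show $x^*\in\reInt K$. Given $x'\in K$, it suffices to produce $\delta>0$ with $(1+\delta)x^*-\delta x'\in K$. Let $y:=x^*+x'\in K$; since $\stdInt\in\stdCone$ and $x^*,x'\in\stdCone^*$, we have $\inProd{\stdInt}{y}=1+\inProd{\stdInt}{x'}\ge 1$, so $\gamma:=1/\inProd{\stdInt}{y}\in(0,1]$ and $\gamma y\in S$. Using $x^*\in\reInt S$, pick $\epsilon>0$ small with $(1+\epsilon)x^*-\epsilon\gamma y\in S\subseteq K$; substituting $y=x^*+x'$ this reads $(1+\epsilon-\epsilon\gamma)x^*-\epsilon\gamma x'\in K$. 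Because $K$ is a cone, a positive rescaling by $\mu:=1/(1+\epsilon(1-2\gamma))$ (valid for small $\epsilon$) produces exactly $(1+\delta)x^*-\delta x'$ with $\delta=\epsilon\gamma/(1+\epsilon(1-2\gamma))>0$, which completes the proof.

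\textbf{Expected obstacle.} The delicate point is the last paragraph: the hyperplane $\{\inProd{\stdInt}{x}=1\}$ is not a genuine base for $K$, because $\stdInt$ may vanish on nonzero elements of $K$, so one cannot simply scale $x'$ into $S$ when $\inProd{\stdInt}{x'}=0$. Replacing $x'$ by the shifted vector $x^*+x'$ bypasses this since its $\stdInt$-inner-product is at least $1$; the only computation needing care is verifying that the resulting cone rescaling yields a positive $\delta$, which is guaranteed by choosing $\epsilon$ small.
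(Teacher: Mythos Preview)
Your proof is correct, and its overall structure mirrors the paper's: identify $\Omega$ with the slice $S=K\cap H$ where $K=\stdCone^*\cap\ker\stdMap\cap\{c\}^\perp$ and $H=\{x:\inProd{\stdInt}{x}=1\}$, then pass from $x^*\in\reInt S$ to $x^*\in\reInt K$. Two differences are worth noting. First, you explicitly argue that $w=0$ throughout $\Omega$ via the line-segment contradiction; the paper simply asserts $\Omega_x=K\cap H$, leaving that step implicit. Second, and more substantively, the final passage from $\reInt S$ to $\reInt K$ is handled differently. The paper invokes Rockafellar's Theorem~6.5: it shows $(\reInt K)\cap H\neq\emptyset$ by taking any $z\in\reInt K$, using $x^*\in K$ and Theorem~6.4 to get $\alpha z+(1-\alpha)x^*\in K$ for some $\alpha>1$, and then $\inProd{\stdInt}{\cdot}\ge 0$ on $K$ together with $\inProd{\stdInt}{x^*}=1$ forces $\inProd{\stdInt}{z}>0$, so $z/\inProd{\stdInt}{z}\in(\reInt K)\cap H$; Theorem~6.5 then gives $\reInt S=(\reInt K)\cap H$ directly. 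Your route is more elementary and self-contained: you verify the line-segment characterization of $\reInt K$ by hand, sidestepping the ``base'' issue by shifting $x'$ to $x^*+x'$ and rescaling via the cone structure. The paper's approach is shorter and cites standard machinery; yours avoids external references at the price of the explicit rescaling computation.
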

\begin{proof}
Let $P_x$ be the linear map that takes $(x,t,w) \in \Re^n \times \Re \times \Re$ to $x$.
Since at optimality we have $t^* = w^* = 0^*$, Equation \eqref{eq:red:2} implies that we have
$$
\Omega _x = \stdCone ^*\cap \ker \stdMap \cap \{c\}^\perp \cap H,
$$
where $\Omega _x = P_x(\Omega) $ and $H = \{x \in \Re^n \mid \inProd{e}{x} = 1\}$.
As $P_x$ is linear, we have $P_x(\reInt \Omega) = \reInt \Omega _x$, see 
Theorem 6.6 in \cite{rockafellar}. Therefore,  $(x^*,t^*,w^*) \in \reInt \Omega$ implies $x^* \in \reInt \Omega _x$.

Let $C = \stdCone ^*\cap \ker \stdMap \cap \{c\}^\perp $, so that $\Omega _x = C\cap H$.
Note that the proposition will be proved if we show that $\reInt \Omega _x = (\reInt (C)) \cap H$.

First, observe that since $H$ is an affine space, we have $\reInt H = H$. Then, by 
Theorem 6.5 in \cite{rockafellar}, a sufficient condition for $(\reInt (C)) \cap H = 
\reInt(C\cap H)$ to hold is that $(\reInt (C) )\cap H \neq \emptyset $. We will now construct a point in $(\reInt (C) )\cap H$.

Let $z \in \reInt(C)$. Note that $x^* \in C$ as well, so there is 
$\alpha > 1$ such that $\alpha z + (1-\alpha)x^* \in C$, by Theorem 6.4 in \cite{rockafellar}. Then, $\inProd{\stdInt}{\alpha z + (1-\alpha)x^*} \geq 0 $ together with $(1-\alpha ) < 0$ and $\inProd{e}{x^*} = 1$ implies that $\inProd{e}{z} > 0$. 
Therefore, $\frac{z}{\inProd{e}{z}} \in (\reInt (C)) \cap H$.
This shows that
$
\reInt \Omega _x = (\reInt (C)) \cap H.
$
\end{proof}

\section{Applications of FRA-Poly}\label{sec:app}
In this section, we discuss applications of FRA-Poly. In the first one, we sharpen 
a result proven by Liu and Pataki \cite{LP17} on the geometry of weakly infeasible problems. 
In the second, we show that the singularity degree of problems over the doubly nonnegative cone 
is at most $n$.

As mentioned before, the singularity degree only depends on $c,\stdMap$ and $\stdCone$. 
Finding the minimal face $\minFaceD$ ensures that no matter which $b$ we select, as long as 
$\dOpt$ is finite, there will be zero duality gap and primal attainment. This suggests 
the following definition that also depends on $b$ and, thus, produce a less conservative 
quantity. 

\begin{definition}[Distance to strong duality]\label{def:dist_str}
The distance to strong duality $\distS(D)$ is the minimum number of facial reduction 
steps (at \eqref{eq:dual}) needed to ensure $\opt{\hat P} = \dOpt$, where $(\hat P)$ is the problem 
$\inf \{\inProd{c}{x} \mid \stdMap x = b, x \in \stdFace _{\ell+1} ^* \}$ and 
$\stdFace _{\ell+1}$ is a face obtained after a sequence of $\ell$ facial reduction steps.
If  $-\infty < \dOpt < +\infty$, we also require attainment of $\opt{\hat P}$.

Similarly, we define $\distS(P)$ as the minimum number of facial reduction steps needed 
to ensure that  $\pOpt = \opt{\hat D}$ and that $\opt{\hat D}$ is attained when $-\infty < \pOpt < +\infty$, where $(\hat D)$ is the  problem in dual standard form arising after some sequence of facial reduction 
steps is done at \eqref{eq:primal}.
\end{definition}

Clearly, we have $\distS(D) \leq d(D)$. However, since Phase 1 of FRA-Poly restores 
strong duality in the sense of Definition \ref{def:dist_str}, we obtain the 
nontrivial bound $\distS(D) \leq \sum_{i=1}^r \distP(\stdCone^i)$.

\subsection{Weak infeasibility}\label{sec:wi}
Let $\stdAffine $ denote the affine space $ c + \matRange{\stdMap^\T }$ and let 
the tuple $(\stdAffine, \stdCone)$ denote the  feasibility problem of seeking 
an element in the intersection $\stdAffine \cap \stdCone = \feasS$. 
In \cite{lourenco_muramatsu_tsuchiya},
we showed that if $\stdCone = \PSDcone{n}$ and \eqref{eq:dual} is weakly infeasible, then there is an affine subspace $\stdAffine '$ contained 
in $\stdAffine$ of dimension at most $n -1$ such that $(\stdAffine ', \stdCone)$ is also weakly infeasible. 
This can be interpreted as saying that ``we need at most $n-1$ directions to approach the positive semidefinite cone''. 
In \cite{LP17}, Liu and Pataki generalized this result and proved that those affine spaces  always 
exist and $\ell _{\stdCone^*} - 1$ is  an upper bound for the dimension of $\stdAffine '$, 
see Corollary 1 therein.
We proved a bound of $r$  for the direct product of $r$ Lorentz 
cones \cite{lourenco_muramatsu_tsuchiya2},  which is tighter than the one in \cite{LP17}.
Here we will refine these results. Consider the following pair of problems.
\vspace{-3\baselineskip}
\begin{multicols}{2}
	\begin{align}
	\underset{x}{\inf} & \quad \inProd{c}{x} \label{eq:primal_inf}\tag{$\text{P}_{\text{feas}}$}\\ 
	\mbox{subject to} & \quad \stdMap x = 0 \nonumber \\ 
	&\quad x \in \stdCone ^* \nonumber 
	\end{align}
	\break
	\begin{align}
	\underset{y}{\sup} & \quad 0 \label{eq:dual_inf} \tag{$\text{D}_{\text{feas}}$} \\ 
	\mbox{subject to} & \quad c - \stdMap ^\T y \in \stdCone. \nonumber\\ \nonumber
	\end{align}
\end{multicols}
\vspace{-1\baselineskip}
Recall that strong infeasibility of \eqref{eq:dual} is equivalent to the existence of $x$ such that 
$x \in \stdCone^*\cap \ker \stdMap$ and $\inProd{c}{x} < 0$, see Lemma 5 in \cite{Luo97dualityresults}.
Therefore,  $\opt{\text{\ref{eq:primal_inf}}} = -\infty$ if 
and only if \eqref{eq:dual} is strongly infeasible. It follows that  \eqref{eq:dual} is weakly infeasible if and only if $\opt{\text{\ref{eq:primal_inf}}} $ is zero and $\opt{\text{\ref{eq:dual_inf}}} = -\infty$.

When $\opt{\text{\ref{eq:primal_inf}}} = 0$ and we restore strong duality to \eqref{eq:primal_inf} in the sense of Definition \ref{def:dist_str}, 
a feasible solution will appear at the dual side. Even if that solution is not feasible for the original problem \eqref{eq:dual},
it will give us some information about \eqref{eq:dual} and this is the motivation  behind Theorem \ref{theo:wi} below.

We first need an auxiliary result that shows that if \eqref{eq:dual} is strongly infeasible and 
we try to regularize \eqref{eq:primal_inf}, then \eqref{eq:dual_inf} (and, therefore, \eqref{eq:dual}) will stay strongly infeasible.

\begin{lemma}\label{lema:str_inf_preserv}
Let $d$ be a reducing direction for \eqref{eq:primal_inf}, i.e., 
$d \in (\matRange \stdMap^\T)  \cap \stdCone$.  Let
 $\hat \stdCone = (\stdCone^*\cap \{d\}^\perp)^*$. Then \eqref{eq:dual_inf} is strongly 
infeasible if and only if $(\hat{\text{D}}_{\text{feas}})$ is strongly infeasible, where $(\hat{\text{D}}_{\text{feas}})$ is the problem 
with $\hat \stdCone$ in place of $\stdCone$.
\end{lemma}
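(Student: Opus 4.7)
The plan is to prove both directions by applying the standard characterization of strong infeasibility recalled in the paragraph immediately preceding the lemma: by Lemma 5 in \cite{Luo97dualityresults}, the system $c - \mathcal{A}^\T y \in \mathcal{K}$ is strongly infeasible if and only if there exists $x \in \mathcal{K}^* \cap \ker \mathcal{A}$ with $\inProd{c}{x} < 0$. Applying this to the auxiliary problem $(\hat{\text{D}}_{\text{feas}})$, strong infeasibility becomes equivalent to the existence of $\hat x \in \hat{\mathcal{K}}^* \cap \ker \mathcal{A}$ with $\inProd{c}{\hat x} < 0$. By the bipolar theorem applied to the closed convex cone $\mathcal{K}^* \cap \{d\}^\perp$, we have $\hat{\mathcal{K}}^* = (\mathcal{K}^* \cap \{d\}^\perp)^{**} = \mathcal{K}^* \cap \{d\}^\perp$, so the condition reduces to the existence of $\hat x \in \mathcal{K}^* \cap \{d\}^\perp \cap \ker \mathcal{A}$ with $\inProd{c}{\hat x} < 0$.

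The ``if'' direction is then immediate: any $\hat x$ certifying strong infeasibility of $(\hat{\text{D}}_{\text{feas}})$ also lies in $\mathcal{K}^* \cap \ker \mathcal{A}$ and satisfies $\inProd{c}{\hat x} < 0$, hence it certifies strong infeasibility of \eqref{eq:dual_inf}.

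The ``only if'' direction is the heart of the argument, and it hinges on the hypothesis $d \in \matRange \stdMap^\T$. Given $x \in \mathcal{K}^* \cap \ker \mathcal{A}$ with $\inProd{c}{x} < 0$, write $d = \mathcal{A}^\T y$ for some $y$. Then $\inProd{d}{x} = \inProd{y}{\mathcal{A}x} = 0$, so $x$ automatically belongs to $\{d\}^\perp$. Therefore the same $x$ already lies in $\mathcal{K}^* \cap \{d\}^\perp \cap \ker \mathcal{A}$ and certifies strong infeasibility of $(\hat{\text{D}}_{\text{feas}})$ as well.

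There is essentially no obstacle beyond recognizing the right characterization to apply; the reducing-direction hypothesis $d \in \matRange \mathcal{A}^\T$ is exactly what makes any element of $\ker \mathcal{A}$ orthogonal to $d$ for free, so the face obtained by intersecting with $\{d\}^\perp$ does not discard any potential infeasibility certificate.
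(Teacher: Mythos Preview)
Your proof is correct and follows essentially the same approach as the paper: both use the certificate characterization of strong infeasibility and the observation that $d \in \matRange \stdMap^\T$ forces $\inProd{d}{x} = 0$ for any $x \in \ker \stdMap$. The only cosmetic difference is that for the ``if'' direction the paper appeals directly to the containment $\stdCone \subseteq \hat\stdCone$, whereas you pass through the certificate via $\hat\stdCone^* \subseteq \stdCone^*$; these are dual to each other and equally immediate.
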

\begin{proof}
$(\Leftarrow)$ This part is clear, since $\stdCone \subseteq \hat \stdCone$.
	
	$(\Rightarrow)$ Strong infeasibility of \eqref{eq:dual_inf}
	is equivalent to the existence of 
	$x$ such that $x \in \stdCone^*\cap \ker \stdMap$ and $\inProd{c}{x} < 0$. 
	Since 
	$d \in  \matRange \stdMap^\T $, we have $\inProd{x}{d} = 0$. By the same principle, 
	$x$ induces strong infeasibility for $(\hat{\text{D}}_{\text{feas}})$ as well. 
\end{proof}
 
\begin{theorem}\label{theo:wi}
\hfill 
\begin{enumerate}[label=(\roman*)]
	\item  \eqref{eq:dual} is not strongly infeasible if and only if there are: 
\begin{enumerate}[label=({\alph*})]
\item a sequence of 
reducing directions $\{d_1, \ldots , d_\ell\}$ for \eqref{eq:primal_inf} restoring strong duality 
in the sense of Definition \ref{def:dist_str} with $\ell = \distS(\text{\ref{eq:primal_inf}})$ and
\item $\hat y$ such that $c - \stdMap^\T \hat y \in (\stdCone^*\cap \{d_1\}^\perp \cap \cdots \cap \{d_\ell\}^\perp)^*$.
\end{enumerate}
\item If \eqref{eq:dual} is not strongly infeasible, there is 
an affine subspace $\stdAffine ' \subseteq c - \matRange \stdMap^\T$ such that 
$(\stdAffine ', \stdCone)$ is not strongly infeasible and the dimension of $\stdAffine '$ satisfies 
\begin{equation*}
\dimSpace(\stdAffine ') \leq \distS(P') \leq \sum _{i=1}^{r} \distP((\stdCone ^i)^*).
\end{equation*}
In particular, if \eqref{eq:dual} is weakly infeasible, then $(\stdAffine ', \stdCone)$ is 
weakly infeasible.
\end{enumerate}
\end{theorem}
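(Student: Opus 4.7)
The plan is to prove (i) first and then use it to build the affine subspace in (ii). The starting observation is that \eqref{eq:dual} fails to be strongly infeasible precisely when $\opt{\text{\ref{eq:primal_inf}}} = 0$ is finite: $x = 0$ is always primal feasible so $\opt{\text{\ref{eq:primal_inf}}} \leq 0$, and by Lemma 5 of \cite{Luo97dualityresults} a strictly negative primal value is equivalent to strong infeasibility of \eqref{eq:dual}. Assuming this finite value, the definition of $\distS(\text{\ref{eq:primal_inf}})$ directly supplies a sequence $\{d_1,\ldots,d_\ell\}$ of $\ell = \distS(\text{\ref{eq:primal_inf}})$ reducing directions after which strong duality with the reduced dual is restored and the reduced dual optimum is attained. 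Since the common optimal value is $0$, any attained dual solution $\hat y$ yields $c - \stdMap^\T \hat y \in (\stdCone^* \cap \{d_1\}^\perp \cap \cdots \cap \{d_\ell\}^\perp)^*$, giving (b). Conversely, the existence of such a $\hat y$ makes the reduced $(\hat D_{\text{feas}})$ feasible, hence not strongly infeasible, and $\ell$ applications of the contrapositive of Lemma \ref{lema:str_inf_preserv} propagate this back to the original \eqref{eq:dual_inf}, which is the same as saying \eqref{eq:dual} is not strongly infeasible.

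For (ii), I would take the data from (i) and define
\[
\stdAffine' := (c - \stdMap^\T \hat y) + \spanVec\{d_1,\ldots,d_\ell\}.
\]
Because each $d_i$ lies in $\matRange \stdMap^\T$, we have $\stdAffine' \subseteq c - \matRange \stdMap^\T$, and trivially $\dimSpace \stdAffine' \leq \ell = \distS(\text{\ref{eq:primal_inf}})$; the remaining bound $\distS(\text{\ref{eq:primal_inf}}) \leq \sum_{i=1}^{r} \distP((\stdCone^i)^*)$ is just the FRA-Poly Phase 1 bound applied at \eqref{eq:primal_inf}, whose conic side is $\stdCone^* = \prod_i (\stdCone^i)^*$. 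The key step is showing that $(\stdAffine', \stdCone)$ is not strongly infeasible. For this I would use the standard polarity identity $(\stdCone^* \cap V)^* = \closure(\stdCone + V^\perp)$ with $V = \spanVec\{d_1,\ldots,d_\ell\}^\perp$: the membership $c - \stdMap^\T \hat y \in (\stdCone^* \cap \{d_1\}^\perp \cap \cdots \cap \{d_\ell\}^\perp)^*$ produces sequences $s_n \in \stdCone$, $v_n \in \spanVec\{d_1,\ldots,d_\ell\}$ with $s_n + v_n \to c - \stdMap^\T \hat y$, and then $a_n := (c - \stdMap^\T \hat y) - v_n \in \stdAffine'$ satisfies $\norm{a_n - s_n} \to 0$, proving $\mathrm{dist}(\stdAffine',\stdCone) = 0$. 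The weak-infeasibility conclusion is then immediate: if $\stdAffine \cap \stdCone = \emptyset$, then $\stdAffine' \subseteq \stdAffine$ forces $\stdAffine' \cap \stdCone = \emptyset$ as well, so $(\stdAffine',\stdCone)$ is infeasible but not strongly so.

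The main technical wrinkle is the iterative use of Lemma \ref{lema:str_inf_preserv} in the converse of (i), since the lemma is stated for a single reduction and its hypothesis requires the next reducing direction to live in $\matRange \stdMap^\T$ intersected with the \emph{current} cone. This is fine by construction (the $d_i$ are exactly the reducing directions produced by facial reduction on the successive reduced problems) but it has to be unwound by a short induction on $\ell$. Apart from that bookkeeping the argument is cone-polar calculus combined with results already established in Section \ref{sec:fra_poly}.
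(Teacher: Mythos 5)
Your proof is correct, and the structure matches the paper's for part (i), but you take a genuinely different route for the key step of part (ii). For part (i), the forward implication is essentially identical (use $\opt{\text{\ref{eq:primal_inf}}} = 0$, apply the definition of $\distS$, extract $\hat y$ from dual attainment); in the converse, the paper argues via optimal values directly (since facial reduction preserves the primal optimal value, condition (a) gives $\opt{\text{\ref{eq:primal_inf}}} = \opt{\hat D}$, and (b) gives $\opt{\hat D} = 0$, hence $\opt{\text{\ref{eq:primal_inf}}} = 0$), whereas you iterate the contrapositive of Lemma~\ref{lema:str_inf_preserv}; both work, and you correctly flag that the lemma as stated covers only one stage so the iteration needs a short inductive unwinding. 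For part (ii) the difference is more substantial. The paper argues by contradiction: assume $(\stdAffine',\stdCone)$ is strongly infeasible, re-use $\{d_1,\ldots,d_\ell\}$ as reducing directions for the auxiliary primal $\inf\{\inProd{\hat s}{x} \mid x \in \stdSpace'^\perp, x \in \stdCone^*\}$, then apply Lemma~\ref{lema:str_inf_preserv} (iteratively) to conclude that $\sup\{0 \mid s \in (\hat s + \stdSpace')\cap \hat\stdFace_{\ell+1}^*\}$ is also strongly infeasible, contradicting that $\hat s$ is feasible there. You instead read off non-strong-infeasibility directly from the conic polarity identity $(\stdCone^* \cap \{d_1\}^\perp \cap \cdots \cap \{d_\ell\}^\perp)^* = \closure(\stdCone + \spanVec\{d_1,\ldots,d_\ell\})$: membership of $\hat s$ in this closure yields sequences $s_n \in \stdCone$, $v_n \in \stdSpace'$ with $s_n + v_n \to \hat s$, so the points $a_n = \hat s - v_n$ lie in $\stdAffine'$ and witness $\mathrm{dist}(\stdAffine',\stdCone) = 0$. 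Your version avoids the contradiction scaffolding and the iterated invocation of Lemma~\ref{lema:str_inf_preserv} in part (ii), at the cost of relying on the iterated lemma in part (i) where the paper does not; it also has the advantage of being entirely constructive. The remaining bound $\distS(\text{\ref{eq:primal_inf}}) \leq \sum_i \distP((\stdCone^i)^*)$ and the passage to weak infeasibility are handled the same way in both.
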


\begin{proof}
\begin{enumerate}[label=({\it\roman*}),wide, labelwidth=!]
	\item $(\Rightarrow)$ Due to the assumption that  \eqref{eq:dual} is not strongly infeasible, we have $\opt{\text{\ref{eq:primal_inf}}} = 0$.
Now, let $\{d_1, \ldots , d_\ell \}$ be a sequence of reducing directions for \eqref{eq:primal_inf} that 
restores strong duality in the sense of Definition \ref{def:dist_str} with 
$\ell = \distS(\text{\ref{eq:primal_inf}})$. 
Let $\hat \stdFace _{\ell+1} = \stdCone^*\cap \{d_1\}^\perp \cap \cdots \cap \{d_\ell\}^\perp$.

Now, \eqref{eq:primal_inf} shares the same feasible region with the problem 
\begin{equation*}
 \inf\, \{ \inProd{c}{x} \mid \stdMap x = 0, x \in \hat \stdFace _{\ell+1} \}. \tag{$\text{\^ P}$}
\end{equation*}
Since facial 
reduction  preserves the optimal value, we have $\opt{\hat P} = \opt{\text{\ref{eq:primal_inf}}} = 0 $. 
Because the  reducing directions restore strong duality, we have $\opt{\hat D} = \{0 \mid c - \stdMap ^\T  y \in \hat\stdFace _{\ell+1}^*\} $ and 
$\opt{\hat D}$ is attained. In particular, there is $\hat y$ such 
that $c- \stdMap ^\T  \hat y \in  \stdFace _{\ell+1}^*$. 

$(\Leftarrow)$ By $(a)$, $ \opt{\text{\ref{eq:primal_inf}}} = \opt{\hat D} = \{0 \mid c - \stdMap ^\T  y \in \hat\stdFace _{\ell+1}^*\}$, where $\hat\stdFace _{\ell+1} = \stdCone^*\cap \{d_1\}^\perp \cap \cdots \cap \{d_\ell\}^\perp $. Due to $(b)$, $\opt{\text{\ref{eq:primal_inf}}} = \opt{\hat D} = 0 $.
Therefore, \eqref{eq:dual} is not strongly infeasible.

\item Let $\{d_1, \ldots, d_\ell\}$, $\hat {y}$ and $\hat \stdFace _{\ell+1} = \stdCone^*\cap \{d_1\}^\perp \cap \cdots \cap \{d_\ell\}^\perp$ be as 
in item $(i)$. Let $\stdAffine'$ be the affine space $\hat s + \stdSpace '$, 
where  $\stdSpace'$ is spanned by the directions $\{d_1, \ldots, d_\ell\}$  and 
$\hat s = c - \stdMap^\T \hat y$. Since 
$\ell = \distS(\text{\ref{eq:primal_inf}})$, we have $\dimSpace \stdAffine' \leq \distS(\text{\ref{eq:primal_inf}})$.
Suppose for the sake of contradiction that $(\stdAffine ', \stdCone)$ is 
strongly infeasible. Then, we can use   $\{d_1, \ldots , d_\ell \}$ as
reducing directions for $\inf \{ \inProd{\hat s}{x} \mid x \in \stdSpace'^\perp, x \in \stdCone^*\}$.
However, Lemma \ref{lema:str_inf_preserv} implies that $\sup \{0 \mid s  \in  (\hat s + \stdSpace ') \cap \hat \stdFace _{\ell+1}^*\}$ is 
strongly infeasible, which contradicts the fact that $\hat s$ is a feasible solution.

Since the number steps required for Phase 1 of FRA-Poly gives an upper bound for 
$\distS(\text{\ref{eq:primal_inf}})$, we obtain $\distS(\text{\ref{eq:primal_inf}}) \leq \sum _{i=1}^{r} \distP((\stdCone ^i)^*)$.

When \eqref{eq:dual} is weakly infeasible, since $\stdAffine ' \subseteq c - \matRange \stdMap^\T$ and 
$(\stdAffine ', \stdCone)$ is  not strongly infeasible,  it must be the case that 
$(\stdAffine ', \stdCone)$ is weakly infeasible.
\end{enumerate}

\end{proof}

Due to Theorem \ref{theo:dist_comp2}, the bound in Theorem \ref{theo:wi} will usually 
compare favorably to $\ell _{\stdCone^*} - 1$. Moreover, it also recovers 
the bounds described in \cite{lourenco_muramatsu_tsuchiya,lourenco_muramatsu_tsuchiya2}.

\subsection{An application to the intersection of cones}\label{sec:intersection}
In this  subsection, we discuss the case where $\stdCone = \stdCone^1 \cap \stdCone^2$.
We can rewrite \eqref{eq:dual} as a problem over 
$\stdCone^1\times \stdCone^2$ by duplicating the entries. 
\vspace{-2\baselineskip}
\begin{multicols}{2}
	\begin{align} 
	\underset{x^1,x^2}{\inf} & \quad \inProd{c}{x^1+x^2} \label{eq:primal_dup}\tag{$P_{\text{dup}}$}\\ 
	\mbox{subject to} & \quad \stdMap (x^1 + x^2) = 0 \nonumber \\ 
	&\quad (x^1, x^2)\in {\stdCone^1}^*\times {\stdCone^2}^* \nonumber 
	\end{align}
	\break
\begin{align}
\underset{y}{\sup} & \quad \inProd{b}{y} \label{eq:dual_dup} \tag{$D_{\text{dup}}$} \\ 
\mbox{subject to} & \quad (c - \stdMap ^\T y,c - \stdMap ^\T y)  \in \stdCone^1\times \stdCone^2 \nonumber \\ \nonumber 
\end{align} 
\end{multicols}
If we apply FRA-Poly to \eqref{eq:dual_dup}, we will obtain a face $\stdFace^1\times \stdFace ^2$ of 
$\stdCone^1\times \stdCone^2$, so that $\stdFace ^1 \cap \stdFace ^2$ will be a face of 
$\stdCone$ containing $\feasS$. Doing facial reduction using the formulation 
\eqref{eq:dual_dup} might be more convenient, since we need to search for reducing 
directions in $(\stdCone^1)^* \times (\stdCone^2)^*$ instead of $\closure((\stdCone^1)^* + (\stdCone^2))^*$  and deciding 
membership in $(\stdCone^1)^* \times (\stdCone^2)^*$ could be more straightforward than doing the same 
for $\closure((\stdCone^1)^* + (\stdCone^2))^*$. 

Before we proceed we need an auxiliary result. If $\stdCone  = \stdCone^1 \cap \stdCone^2$, 
it is always true that the intersection of a face of $\stdCone^1$ with a 
face of $\stdCone^2$ results in a face of $\stdCone$.
However, it is not  obvious  that every face of $\stdCone$ arises 
as an intersection of faces of $\stdCone ^1$ and $\stdCone ^2$, so we remark 
that as a proposition although it is probably a well-known result.

\begin{proposition}\label{prop:intersection}
Let $\stdFace$ be a nonempty face of $\stdCone ^1 \cap \stdCone ^2$.
Let $\stdFace^1$ and $\stdFace^2$ be the minimal faces of $\stdCone^1$ and $\stdCone^2$, respectively, containing 
$\stdFace$. 
Then $\stdFace = \stdFace^1\cap \stdFace^2$ and $\stdFace^* =  (\stdFace^1)^* + (\stdFace^2)^*$.
\end{proposition}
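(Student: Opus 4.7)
The plan is to pick a relative interior point $e \in \reInt \stdFace$ and track how it sits inside $\stdFace^1$, $\stdFace^2$, and $\stdFace^1 \cap \stdFace^2$. The key well-known fact I would use throughout is that every point of a convex set lies in the relative interior of a \emph{unique} face, namely the minimal face of the ambient set containing it.

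\textbf{First claim: } $\stdFace = \stdFace^1 \cap \stdFace^2$. I would first argue that $e \in \reInt \stdFace^i$ for $i = 1, 2$. For this, if $G$ denotes the minimal face of $\stdCone^i$ containing $e$, then I would show $G \supseteq \stdFace$ by the standard face-segment argument: given any $x \in \stdFace$, since $e$ is relatively interior to $\stdFace$, I can extend past $e$ on the segment from $x$ to get a point $y \in \stdFace \subseteq \stdCone^i$ with $e$ strictly between $x$ and $y$; the face property of $G$ then forces $x \in G$. By the minimality of $\stdFace^i$ among faces of $\stdCone^i$ containing $\stdFace$, this gives $G = \stdFace^i$ and therefore $e \in \reInt \stdFace^i$. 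Next, $\stdFace^1 \cap \stdFace^2$ is a face of $\stdCone^1 \cap \stdCone^2$ (a direct check using the face property of each $\stdFace^i$), and since the relative interiors $\reInt \stdFace^1$ and $\reInt \stdFace^2$ meet at $e$, Theorem 6.5 in \cite{rockafellar} yields $\reInt(\stdFace^1 \cap \stdFace^2) = \reInt \stdFace^1 \cap \reInt \stdFace^2$, so $e \in \reInt(\stdFace^1 \cap \stdFace^2)$. Thus $\stdFace$ and $\stdFace^1 \cap \stdFace^2$ are two faces of $\stdCone^1 \cap \stdCone^2$ sharing a relative interior point, hence they coincide.

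\textbf{Second claim: } $\stdFace^* = (\stdFace^1)^* + (\stdFace^2)^*$. With the first claim in hand and with $\reInt \stdFace^1 \cap \reInt \stdFace^2 \ne \emptyset$ established, I would invoke the Moreau–Rockafellar calculus of polar/dual cones (Corollary 16.4.2 in \cite{rockafellar} applied to the polar cones of $\stdFace^1$ and $\stdFace^2$), which gives exactly that the sum $(\stdFace^1)^* + (\stdFace^2)^*$ is closed and equals $(\stdFace^1 \cap \stdFace^2)^* = \stdFace^*$. No closure needs to be taken because the relative interior condition provides the qualification.

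\textbf{Expected main obstacle. } Everything reduces to three standard but easy-to-miss facts: (i) the minimal face of $\stdCone^i$ containing $\stdFace$ is also the minimal face containing any chosen $e \in \reInt \stdFace$, (ii) when two convex sets have a common relative interior point, the relative interior of their intersection is the intersection of their relative interiors, and (iii) under the same qualification the polar/dual of the intersection is the (closed) sum of the duals. The real content is just lining up the definitions so that (i)–(iii) apply; there is no genuine combinatorial or topological difficulty beyond this bookkeeping.
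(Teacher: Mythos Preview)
Your proof is correct and follows essentially the same route as the paper's: both show $e \in \reInt \stdFace$ lies in $\reInt \stdFace^i$ (you via an explicit segment argument, the paper via a cited fact), then use Theorem~6.5 in \cite{rockafellar} to get $e \in \reInt(\stdFace^1 \cap \stdFace^2)$ and conclude the two faces coincide, and finally invoke Corollary~16.4.2 in \cite{rockafellar} for the dual identity.
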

\begin{proof}
Since $\stdFace ^1 \cap \stdFace ^2$ is a face of $\stdCone^1 \cap \stdCone ^2$,
in order to prove that $\stdFace ^1 \cap \stdFace ^2 = \stdFace$ it is 
enough to show that their relative interiors intersect, which we will do next. By the choice of $\stdFace^1$ and $\stdFace^2$, 
we have $\reInt(\stdFace) \subseteq \reInt (\stdFace^1)$ and 
$\reInt(\stdFace) \subseteq \reInt (\stdFace^2)$, see item $(iii)$ of Proposition 2.2 in \cite{pataki_handbook}\footnote{Proposition 2.2 ensures that $\reInt(\stdFace) $ intersects 
$\reInt (\stdFace^1)$. However, given that $\stdFace _1$ contains $\stdFace$, this is enough for 
the containment $\reInt(\stdFace) \subseteq \reInt (\stdFace^1)$, due to Corollary 6.5.2 in \cite{rockafellar}. The same goes for $\stdFace _2$. }. In particular, 
this implies that $\reInt (\stdFace^1) \cap \reInt (\stdFace^2) \neq \emptyset$.
Therefore, $\reInt (\stdFace^1 \cap  \stdFace^2)  = \reInt (\stdFace^1) \cap \reInt (\stdFace^2) $,
by Theorem 6.5 in \cite{rockafellar}. We conclude that  
$\reInt(\stdFace) \cap \reInt (\stdFace^1 \cap  \stdFace^2) = \reInt(\stdFace) \cap \reInt (\stdFace^1) \cap \reInt (\stdFace^2) \neq \emptyset$. It follows 
that $\stdFace = \stdFace^1 \cap  \stdFace^2$.

Because $\reInt (\stdFace^1) \cap \reInt (\stdFace^2) \neq \emptyset$, the sum $(\stdFace^1)^* + (\stdFace^2)^*$ is closed (see Corollary 16.4.2 in \cite{rockafellar}), so 
that $\stdFace^* = \closure ((\stdFace^1)^* + (\stdFace^2)^*) = (\stdFace^1)^* + (\stdFace^2)^*$.
\end{proof}
%
%
In what follows, we will need two well-known equalities. 
Let $\stdCone ^1, \stdCone ^2$ be closed convex cones and 
$d^1 \in (\stdCone^1)^*, d^2\in (\stdCone ^2)^*$. Then:
\begin{align}
(\stdCone^1 \times \stdCone^2)\cap \{(d^1,d^2) \}^\perp & = \left({\stdCone^1} \cap \{d^1\}^\perp\right) \times \left({\stdCone^2}\cap\{d^2\}^\perp\right) \label{eq:lem1}\\
{\stdCone^1} \cap {\stdCone^2}\cap \{d_1^1\}^\perp \cap \{d_1^2 \}^\perp  & =   {\stdCone^1} \cap {\stdCone^2}\cap \{d_1^1 +d_1^2 \}^\perp. \label{eq:lem2}
\end{align}

\begin{theorem}\label{theo:inter_sing}
Let $\stdCone = \stdCone^1 \cap \stdCone^2$. 
\begin{enumerate}[label=(\roman*)]
	\item Let $\minFace{\eqref{eq:dual_dup}} = \stdFace^1 \times \stdFace^2$ be 
the minimal face of $\stdCone^1 \times \stdCone^2$ containing the set of  feasible 
slacks of \eqref{eq:dual_dup}. Then, $\minFaceD = \stdFace ^1 \cap \stdFace ^2 $.
	\item The singularity degree and the distance to strong duality of  \eqref{eq:dual} satisfy
	\begin{alignat*}{3}
		d(D) \leq & d(D_{\text{dup}}) \leq & 1+ \distP(\stdCone^1) + \distP(\stdCone^2)\\
		\distS(D) \leq & \distS(D_{\text{dup}}) \leq & \distP(\stdCone^1) + \distP(\stdCone^2)
	\end{alignat*}
\end{enumerate}
\end{theorem}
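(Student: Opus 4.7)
For part (i), I would observe that the feasible slacks of \eqref{eq:dual_dup} are precisely the pairs $(s,s)$ with $s \in \feasS$. Hence $\stdFace^1 \times \stdFace^2 \supseteq \{(s,s) : s \in \feasS\}$ forces $\stdFace^1, \stdFace^2 \supseteq \feasS$, so $\stdFace^1 \cap \stdFace^2$ is a face of $\stdCone$ containing $\feasS$, giving $\minFaceD \subseteq \stdFace^1 \cap \stdFace^2$. For the reverse inclusion, let $\tilde\stdFace^j$ be the minimal face of $\stdCone^j$ containing $\minFaceD$; Proposition \ref{prop:intersection} gives $\tilde\stdFace^1 \cap \tilde\stdFace^2 = \minFaceD$. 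Since $\tilde\stdFace^1 \times \tilde\stdFace^2$ is a face of $\stdCone^1 \times \stdCone^2$ containing every feasible slack of \eqref{eq:dual_dup}, minimality of $\minFace{\eqref{eq:dual_dup}}$ forces $\stdFace^j \subseteq \tilde\stdFace^j$, and hence $\stdFace^1 \cap \stdFace^2 \subseteq \minFaceD$.

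The upper bounds $d(D_{\text{dup}}) \leq 1 + \distP(\stdCone^1) + \distP(\stdCone^2)$ and $\distS(D_{\text{dup}}) \leq \distP(\stdCone^1) + \distP(\stdCone^2)$ follow directly from Theorem \ref{theo:fra_bound} and the $\distS$ bound noted right after Definition \ref{def:dist_str}, applied to \eqref{eq:dual_dup}, whose underlying cone is the product $\stdCone^1 \times \stdCone^2$ of $r=2$ cones.

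For $d(D) \leq d(D_{\text{dup}})$ and $\distS(D) \leq \distS(D_{\text{dup}})$, the plan is to lift any valid FRA sequence $(d_1^1,d_1^2),\ldots,(d_\ell^1,d_\ell^2)$ for \eqref{eq:dual_dup} to the sequence $d_i := d_i^1 + d_i^2$ for \eqref{eq:dual}. Writing $\stdFace_i^{(j)}$ for the face of $\stdCone^j$ present after $i-1$ reductions, I would verify by induction using \eqref{eq:lem2} (with $\stdFace_i^{(j)}$ in place of $\stdCone^j$) that $\stdFace_i = \stdFace_i^{(1)} \cap \stdFace_i^{(2)}$. The fact that each $d_i$ is a valid reducing direction for \eqref{eq:dual} follows from $d_i \in (\stdFace_i^{(1)})^* + (\stdFace_i^{(2)})^* \subseteq \stdFace_i^*$, together with $\stdMap d_i = 0$ and $\inProd{c}{d_i} \leq 0$, all immediate from the structure of \eqref{eq:dual_dup}. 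Any $d_i \in \stdFace_i^\perp$ is trivial and dropped, producing a length-$\leq \ell$ valid sequence that ends at $\stdFace^1 \cap \stdFace^2 = \minFaceD$ by part (i), so $d(D) \leq d(D_{\text{dup}})$. For $\distS$, if $(x_\star^1, x_\star^2)$ attains the reduced primal of \eqref{eq:dual_dup}, then $x_\star := x_\star^1 + x_\star^2 \in \stdFace_{\ell+1}^*$ attains the reduced primal of \eqref{eq:dual} with the same objective; the chain $\dOpt = \opt{\hat D_{\text{dup}}} = \opt{\hat P_{\text{dup}}} \geq \opt{\hat P} \geq \opt{\hat D} = \dOpt$ (middle inequality by feasible-region containment of the reduced primals, last by weak duality) then closes to $\opt{\hat P} = \dOpt$.

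The main subtlety I anticipate is the pruning step: dropping $d_i \in \stdFace_i^\perp$ must not disrupt the inductive identity for $\stdFace_i$ nor invalidate subsequent reducing directions. But a trivial $d_i$ leaves $\stdFace_i$ unchanged, so the downstream chain of faces and the validity of $d_{i+1}, d_{i+2}, \ldots$ as reducing directions go through unmodified; the infeasibility case is handled automatically since $\inProd{c}{d_i} = \inProd{c}{d_i^1 + d_i^2}$ transfers the infeasibility certificate between the two problems.
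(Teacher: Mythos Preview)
Your proposal is correct and follows essentially the same approach as the paper: Proposition~\ref{prop:intersection} for part (i), and the lifting $d_i = d_i^1 + d_i^2$ together with the inductive identity $\stdFace_i = \stdFace_i^{(1)} \cap \stdFace_i^{(2)}$ for part (ii). Your treatment is in fact slightly more careful than the paper's in two places: you explicitly prune directions with $d_i \in \stdFace_i^\perp$ (which can genuinely occur, e.g.\ when $d_i^1 \in (\stdFace_i^{(1)} \cap \stdFace_i^{(2)})^\perp$ despite $d_i^1 \notin (\stdFace_i^{(1)})^\perp$), and you spell out the weak-duality sandwich for the $\distS$ transfer, whereas the paper simply asserts that strong duality carries over.
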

\begin{proof}

$(i)$ $\stdFace^1$ must be the minimal face of $\stdCone^1$ containing 
$\feasS = \{c -\stdMap^\T y \in \stdCone \}$. Otherwise, if some proper face $\widetilde \stdFace$ 
of $\stdFace^1$ is minimal, then $\widetilde \stdFace \times \stdFace^2$ contains 
the feasible slacks of \eqref{eq:dual_dup}, which contradicts the minimality of 
$\hat \stdFace$. The same must hold for $\stdFace ^2$. Then Proposition \ref{prop:intersection} 
implies $\minFaceD = \stdFace ^1 \cap \stdFace ^2$.

$(ii)$ To prove the bounds we first show that given 
$\ell$ reducing  directions for \eqref{eq:dual_dup} we can also construct $\ell$ reducing directions for \eqref{eq:dual} and that there are relations between the faces defined by both sets of directions.

Suppose that $\{(d_1^1,d_1^2), \ldots, (d_\ell^1,d_\ell^2) \}$ are reducing directions for \eqref{eq:dual_dup}. Define
\begin{align}
\stdFace _1^1 \times \stdFace _1^2 & \coloneqq \stdCone^1 \times \stdCone^2 \label{eq:def1}\\
\stdFace _{i+1}^1\times \stdFace _{i+1}^2 & \coloneqq (\stdFace _{i}^1\times \stdFace _{i}^2) \cap \{(d_i^1,d_i^2)\}^\perp \qquad  1 < i \leq \ell. \label{eq:def2}
\end{align}
We will show by induction on $i$
that $\{d_1^1+d_1^2, \ldots, d_\ell^1+d_\ell^2 \}$ are  reducing directions for 
\eqref{eq:dual} and that 
\begin{equation}\stdFace ^1_{i+1} \cap \stdFace ^2 _{i+1} =  \stdCone^1 \cap \stdCone^2 \cap \{d_1^1+d_1^2\}^\perp \cap \cdots \cap \{d_i^1+d_i^2\}^\perp. \label{eq:goal}
\end{equation} 
If \fbox{$i = 1$}, because $(d_1^1,d_1^2)$ is a reducing direction for \eqref{eq:dual_dup}, 
it satisfies 
\begin{equation}
\stdMap (d_1^1 + d_1^2) = 0, \qquad \inProd{c}{d_1^1 + d_1^2} \leq 0, \qquad (d_1^1,d_1^2) \in {\stdCone^1}^* \times {\stdCone^2}^*. \label{eq:base_1}
\end{equation}
Our goal is to show that $d_1^1+d_2^1$ is a reducing direction for \eqref{eq:dual}, that is
\begin{equation}
\stdMap (d_1^1 + d_1^2) = 0, \qquad \inProd{c}{d_1^1 + d_1^2} \leq 0, \qquad d_1^1+d_1^2 \in \closure{({\stdCone^1}^* +{\stdCone^2}^*)} \label{eq:goal_1}
\end{equation}
and that \eqref{eq:goal} holds when $i = 1$. Note that 
\eqref{eq:goal_1} follows directly from \eqref{eq:base_1}. From \eqref{eq:def1}, \eqref{eq:def2} and \eqref{eq:lem1} we have:
$$
\stdFace^1 _2 \times \stdFace^2_2 = ({\stdCone^1} \times {\stdCone^2})\cap\{(d_1^1,d_1^2) \}^\perp  = \left({\stdCone^1} \cap \{d_1^1\}^\perp\right) \times \left({\stdCone^2}\cap\{d_1^2\}^\perp\right).
$$
By \eqref{eq:lem2}, we conclude that 
$\stdFace^1 _2 \cap  \stdFace^2_2 = {\stdCone^1} \cap {\stdCone^2}\cap \{d_1^1\}^\perp \cap \{d_1^2 \}^\perp =  {\stdCone^1} \cap {\stdCone^2}\cap \{d_1^1 +d_1^2 \}^\perp$. 

If \fbox{$i > 1$}, by induction, we have that \eqref{eq:goal} holds up to $i - 1$. By hypothesis, $(d_i^1,d_i^2)$ is a reducing direction, so 
that  
\begin{equation}
\stdMap (d_i ^1 + d_i ^2) = 0, \qquad \inProd{c}{d_i ^1 + d_i ^2} \leq 0 , \qquad (d_i^1,d_i^2) \in {\stdFace _i^1}^* \times {\stdFace _i^2}^*. \label{eq:ind_1}
\end{equation}
We have to show that \eqref{eq:goal} holds and that
\begin{equation}
\stdMap (d_i ^1 + d_i ^2) = 0, \qquad \inProd{c}{d_i ^1 + d_i ^2} \leq 0 , \qquad d_i^1 + d_i^2 \in \closure({\stdFace _i^1}^*+{\stdFace _i^2}^*). \label{eq:ind_2}
\end{equation}
As before, \eqref{eq:ind_2} follows directly from \eqref{eq:ind_1}.
From \eqref{eq:def2}, \eqref{eq:ind_1} and \eqref{eq:lem1} we obtain
$$\stdFace _{i+1}^1 \times \stdFace _{i + 1}^2 = 
(\stdFace^1 _i \cap \{d_i^1 \}^\perp) \times(\stdFace^2 _i \cap \{d_i^2 \}^\perp). $$
We conclude that:
\begin{align*}
\stdFace _{i+1}^1 \cap  \stdFace _{i + 1}^2 & = \stdFace^1  _i \cap \stdFace^2 _i \cap \{d_i^1 + d_i^2 \}^\perp \\
& =  \stdCone^1 \cap \stdCone^2 \cap \{d_1^1+d_1^2\}^\perp \cap \cdots \cap \{d_i^1+d_i^2\}^\perp,
\end{align*}
where the first equality follows from \eqref{eq:lem2} and the second follows from the induction hypothesis. This concludes the induction.


Finally, if $\minFace{\eqref{eq:dual_dup}}  = \stdFace ^1_{\ell+1} \times \stdFace ^2 _{\ell+1}$, 
then $(i)$  implies $\minFaceD =  \stdFace ^1_{\ell+1} \cap \stdFace ^2 _{\ell+1}$.
Similarly, if substituting $\stdCone ^1 \times \stdCone ^2$ in \eqref{eq:dual_dup} for 
$\stdFace ^1_{\ell+1} \times \stdFace ^2_{\ell+1} $ restores strong duality, then the same holds 
for \eqref{eq:dual} if we substitute $\stdCone$ for $\stdFace ^1_{\ell+1} \cap \stdFace ^2 _{\ell+1}$.
This shows that $	d(D) \leq  d(D_{\text{dup}})$ and  $\distS(D) \leq  \distS(D_{\text{dup}})$, respectively. The other bounds  follow from Propositions \ref{prop:phase1} and \ref{prop:min_face}.
\end{proof}

We now consider the case where $\stdCone$ is the doubly nonnegative cone $\doubly{n} = \PSDcone{n} \cap 
\nonNegative{n}$, where $\nonNegative{n}$ is the cone of $n\times n$ symmetric matrices 
with nonnegative entries. This cone is important because it can be used as a relatively 
tractable relaxation for the cone of completely positive matrices, see \cite{Yoshise10,KKT15,AKKT2014}. 
\begin{corollary}\label{col:doubly_sing}
When $\stdCone = \doubly{n}$, we have $d(D) \leq n$ and $\distS(D) \leq n-1$.
\end{corollary}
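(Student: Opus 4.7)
The plan is simply to apply Theorem \ref{theo:inter_sing} to the decomposition $\doubly{n} = \PSDcone{n} \cap \nonNegative{n}$, taking $\stdCone^1 = \PSDcone{n}$ and $\stdCone^2 = \nonNegative{n}$. Once this is done, the entire content of the corollary reduces to computing the distance to polyhedrality of the two factor cones.

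For the positive semidefinite cone, Example \ref{ex:psd_lorentz} already records $\distP(\PSDcone{n}) = n - 1$. For the nonnegative symmetric matrix cone, I would observe that $\nonNegative{n}$ is defined by finitely many linear inequalities, hence is polyhedral, so every face of $\nonNegative{n}$ is polyhedral as well. By the very definition of $\distP$, any strictly ascending chain $\stdFace_1 \subsetneq \ldots \subsetneq \stdFace_\ell$ in which $\stdFace_j$ is non-polyhedral for $j > 1$ must then have length $\ell = 1$, and therefore $\distP(\nonNegative{n}) = 0$.

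Substituting these two values into the bounds provided by Theorem \ref{theo:inter_sing},
\[
d(D) \leq 1 + \distP(\PSDcone{n}) + \distP(\nonNegative{n}) = 1 + (n-1) + 0 = n,
\]
and analogously $\distS(D) \leq \distP(\PSDcone{n}) + \distP(\nonNegative{n}) = n - 1$, which is exactly the claim.

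There is no real obstacle here: the corollary is a direct application of the machinery already built, so the only thing to verify carefully is that $\distP$ of a polyhedral cone is indeed zero under the convention used in the paper (which it is, since the chain is required to start at a polyhedral face and only subsequent faces must fail polyhedrality).
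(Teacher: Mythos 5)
Your proof is correct and follows exactly the paper's own route: apply Theorem \ref{theo:inter_sing} to the decomposition $\doubly{n} = \PSDcone{n} \cap \nonNegative{n}$ and plug in $\distP(\PSDcone{n}) = n-1$ and $\distP(\nonNegative{n}) = 0$. The only difference is that you spell out the (straightforward) reason that a polyhedral cone has distance to polyhedrality zero, which the paper leaves implicit.
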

\begin{proof}
Follows from Theorem \ref{theo:inter_sing} since $\distP(\PSDcone{n}) = n-1$ and $\distP(\nonNegative{n}) = 0$.
\end{proof}

We will compare the bound in Corollary \ref{col:doubly_sing} with the 
one predicted by the classical FRA. To do that, we need to compute $\ell _{\doubly{n}}$.
\begin{proposition}\label{prop:doubly_chain}
The longest chain of nonempty faces in $\doubly{n}$ has length 
$\frac{n(n+1)}{2} + 1$, which is the maximum possible for a cone 
contained in $\S^n$.
\end{proposition}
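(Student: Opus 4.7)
The plan is to prove both inequalities directly. The upper bound $\ell \le \tfrac{n(n+1)}{2}+1$ for any strictly ascending chain of nonempty faces of a cone contained in $\mathcal{S}^n$ follows at once from the fact that dimensions are strictly increasing along such a chain, together with $\dim \mathcal{S}^n = \tfrac{n(n+1)}{2}$. The nontrivial part is the lower bound, namely exhibiting a chain of $\tfrac{n(n+1)}{2}+1$ faces inside $\doubly n$.

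To construct it, I would index candidate faces of $\doubly n$ by ``graph data''. For $V \subseteq \{1,\ldots,n\}$ and $E \subseteq \binom{V}{2}$, set $S(V,E)$ to be the set of pairs $(i,i)$ with $i \in V$ together with the pairs $(i,j)$ and $(j,i)$ with $\{i,j\} \in E$, and define
\[
F_{V,E} \;=\; \{\, M \in \doubly n : M_{ij} = 0 \text{ for every } (i,j) \notin S(V,E) \,\}.
\]
A direct check shows $F_{V,E}$ is always a face of $\doubly n$: if $M = X + Y$ with $M \in F_{V,E}$ and $X, Y \in \doubly n$, nonnegativity of the entries forces $X_{ij} = Y_{ij} = 0$ at every $(i,j) \notin S(V,E)$; for $i \notin V$ this gives in particular $X_{ii} = Y_{ii} = 0$, and then positive semidefiniteness makes the corresponding row and column of $X$ and $Y$ vanish, so $X, Y \in F_{V,E}$.

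The key quantitative step is $\dim F_{V,E} = |V| + |E|$. One inclusion is clear since $F_{V,E}$ lies in the subspace $T_{V,E} \subseteq \mathcal{S}^n$ of matrices supported on $S(V,E)$, which has dimension $|V| + |E|$. For the reverse I would exhibit a relative interior point: take $M$ with $M_{ii} = 1$ for $i \in V$, $M_{ij} = \varepsilon$ for $\{i,j\} \in E$, and $0$ elsewhere. For $\varepsilon$ small, the $V \times V$ principal submatrix $I_V + \varepsilon A_E$ (with $A_E$ the adjacency matrix of $(V, E)$) is strictly positive definite, so $M \in \doubly n$, and since every inequality defining $F_{V,E}$ as a subset of $T_{V,E}$ holds strictly at $M$, we get $\affineHull F_{V,E} = T_{V,E}$ and hence the claimed dimension. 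This perturbation argument is the only mildly technical point of the proof.

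The chain is then built greedily: starting from $(V_0, E_0) = (\emptyset, \emptyset)$, at each step I would either add the next vertex $k+1$ to $V$ or append a single edge from $k+1$ to a previously added vertex; concretely, after adding vertex $k+1$, I insert the edges $\{k+1,1\}, \ldots, \{k+1, k\}$ one at a time. Each elementary move increases $|V| + |E|$ by exactly $1$, so after $\tfrac{n(n+1)}{2}$ moves the resulting sequence $F_{V_0, E_0} \subsetneq F_{V_1, E_1} \subsetneq \cdots \subsetneq F_{V_N, E_N}$ has strictly increasing dimensions $0, 1, \ldots, \tfrac{n(n+1)}{2}$ and terminates at the complete graph on $\{1,\ldots,n\}$, i.e.\ at $\doubly n$ itself. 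This delivers the required chain of length $\tfrac{n(n+1)}{2}+1$.
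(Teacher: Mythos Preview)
Your proof is correct and follows essentially the same approach as the paper: both index faces of $\doubly n$ by the set of entries permitted to be nonzero and build the chain by adjoining one diagonal or off-diagonal position at a time. The only differences are cosmetic --- the paper first adds all $n$ diagonal entries and then the off-diagonals (rather than interleaving as you do), and it certifies strict inclusion at each step by exhibiting an explicit witness of the form $x + \alpha I_n$ in the larger face, whereas you obtain strictness from your dimension formula $\dim F_{V,E} = |V| + |E|$.
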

\begin{proof}
Maximality follows from the fact  that the dimension of $\S^n$ is
$\frac{n(n+1)}{2}$ and that if we have 
two faces such that $\stdFace \subsetneq \hat \stdFace$ then  
$\dimSpace(\stdFace) < \dimSpace(\hat \stdFace)$. 

Let $\mathcal{G}$ be any set of tuples $(i,j)$ with $i,j \in \{1,\ldots, n\}$  and 
let $\nonNegative{n}(\mathcal{G})$ be the face of $\nonNegative{n}$ which 
corresponds to the matrices $x$ such that the only entries $x_{i,j}$ that are allowed 
to be nonzero are the ones for which either $(i,j) \in \mathcal{G}$ or 
$(j,i) \in \mathcal{G}$. We will first define two chains of faces of $\nonNegative{n}$.
First, let  $\mathcal{G}_0 = \emptyset$ and define $\mathcal{G}_i = \mathcal{G}_{i-1} \cup \{(i,i) \}$ for 
$i \in \{1, \ldots, n\}$.  We now consider 
the following construction written in pseudocode.
\begin{enumerate}[label=\ ]
	\item $k \leftarrow 1$, $\mathcal{H}_0 \leftarrow \mathcal{G}_n$
	\item \textbf{For} $i \leftarrow 1, i \leq n$ \textbf{do}
	\begin{enumerate}[label=\ ]
		\item \textbf{For} $j \leftarrow 1$, $j < i$ \textbf{do}
		\begin{enumerate}[label=\ ]
			\item \hspace{0.5cm}$\mathcal{H}_k \leftarrow \mathcal{H}_{k-1}  \cup \{i,j\}  $
			\item \hspace{0.5cm}$k \leftarrow k + 1$
			\item \hspace{0.5cm}$j \leftarrow j + 1$.
		\end{enumerate}
		\item $i \leftarrow i + 1$.
	\end{enumerate}
\end{enumerate}
The idea is to add one non-diagonal entry per iteration, so 
that $\nonNegative{n}(\mathcal{H}_k) \subsetneq    \nonNegative{n}(\mathcal{H}_{k+1})$.
First $(2,1)$ will be added, then $(3,1),(3,2)$ and so on.
We have
\begin{equation*}
\PSDcone{n} \cap \nonNegative{n}(\mathcal{G}_0) \subsetneq \ldots 
\subsetneq \PSDcone{n} \cap \nonNegative{n}(\mathcal{G}_n) 
\subsetneq \PSDcone{n} \cap \nonNegative{n}(\mathcal{H}_1) \subsetneq \ldots 
\subsetneq \PSDcone{n} \cap \nonNegative{n}(\mathcal{H}_{\frac{n(n-1)}{2}}) 
\end{equation*}
and all inclusions are indeed strict. The first $n$ inclusions are strict 
because  $ \PSDcone{n} \cap \nonNegative{n}(\mathcal{G}_i) = \nonNegative{n}(\mathcal{G}_i)$ and 
it is clear that $\nonNegative{n}(\mathcal{G}_i) \subsetneq \nonNegative{n}(\mathcal{G}_{i+1})$.
Now, let $\mathcal{I}_n$ denote the $n\times n$ identity matrix.
If $k > 0$ and  $x \in \reInt \nonNegative{n}(\mathcal{H}_k)$ then $x_{i,j} > 0$ for 
some $(i,j)$ entry  such that neither $(i,j)$ nor $(j,i)$ belong to $\mathcal{H}_{k-1} $.
For $\alpha > 0$ sufficiently large, we have $x + \alpha \mathcal{I}_n \in \PSDcone{n} \cap \nonNegative{n}(\mathcal{H}_k) $
and $x + \alpha \mathcal{I}_n \not \in \PSDcone{n} \cap \nonNegative{n}(\mathcal{H}_{k-1}) $. This shows the 
remainder of the containments and concludes the proof, since the chain 
has length $\frac{n(n+1)}{2} + 1$.
\end{proof}

For feasible problems, the classical FRA analysis gives either the bound 
$\ell_{\doubly{n}} - 1 = \frac{n(n+1)}{2} $ or, using Theorem \ref{theo:inter_sing}, 
the bound $\ell_{\PSDcone{n}} - 1 +  \ell_{\nonNegative{n}} - 1 = n + \frac{n(n+1)}{2}$.
Both bounds are quadratic in $n$ in opposition to the linear bound obtained 
in Corollary \ref{col:doubly_sing}.

\small{
	\section*{Acknowledgements}
We thank the referees and the associate editor for the detailed feedback we were given. The article benefited greatly from it and, in particular, 
the 2nd referee motivated the discussion in Section \ref{sec:tight} and suggested the first instance in Appendix \ref{app:ex}, for which we are grateful.
}
\bibliographystyle{plainurl}
\bibliography{bib}
\appendix
\section{Partial Polyhedrality and Slater's condition}\label{app:partial}
Let $f:\Re^n \to \Re\cup \{-\infty,+\infty \}$ be a convex function.
We denote the domain of $f$ by $\fDom f = \{x \in \Re^n \mid f(x) < \infty \}$. If $\fDom f \neq \emptyset $ and $f$ is never $-\infty$, then $f$ is said to be proper.
Its conjugate will be denoted by $f^*$ and it satisfies $f^*(s) = \sup _{x} \inProd{s}{x} - f(x)$.
If the epigraph of $f$ is a polyhedral set, then $f$ is said to be a polyhedral function. 
We recall Theorem 20.1 from \cite{rockafellar}.

\begin{theorem}[Rockafellar]\label{theo:conv}
Let $f_1, \ldots, f_m$ be proper convex functions and let $f_{k+1}, \ldots ,$ $f_m$ be 
polyhedral functions. Suppose also that
$$
\reInt (\fDom f_1) \cap \cdots \cap \reInt (\fDom f_k) \cap \fDom f_{k+1} \cap \cdots \cap \fDom f_m \neq \emptyset.
$$
Then the following holds:
\begin{equation*}
(f_1 + \cdots + f_m)^*(s) = \inf \{ f_1^*(s_1) + \cdots + f_m^*(s_m) \mid s_1 + \cdots + s_m = s \},
\end{equation*}
where for each $s$ the infimum is attained whenever it is finite.
\end{theorem}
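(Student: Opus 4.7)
The plan is to establish the identity by induction on $m$, with the two-function case $m=2$ carrying all the real content. The inequality
\[
(f_1 + \cdots + f_m)^*(s) \leq \inf\!\left\{\sum_{i=1}^m f_i^*(s_i) : \sum_{i=1}^m s_i = s\right\}
\]
needs no hypothesis: for any decomposition $s = \sum_i s_i$ and any $x$, summing $\inProd{s_i}{x} - f_i(x) \leq f_i^*(s_i)$ yields $\inProd{s}{x} - \sum_i f_i(x) \leq \sum_i f_i^*(s_i)$, and then taking $\sup_x$ and $\inf$ over decompositions concludes. So the real content is the reverse inequality together with attainment.

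For the base case $m = 2$, I would argue by separation of convex sets in $\Re^{n+1}$. Assume $v := (f_1 + f_2)^*(s)$ is finite and introduce the concave function $g(x) := \inProd{s}{x} - f_1(x) - v$, which satisfies $g \leq f_2$ pointwise by the definition of $v$. Consequently $\reInt \hyp g$ and $\reInt \epi f_2$ are disjoint convex subsets of $\Re^{n+1}$, so they admit a proper separating hyperplane. If the separator has the \emph{non-vertical} form $\{(x,t) : t = \inProd{y}{x} + c\}$, then $g \leq \inProd{y}{\cdot} + c$ on $\fDom g$ and $f_2 \geq \inProd{y}{\cdot} + c$ on $\fDom f_2$; these rearrange to $f_1^*(s - y) \leq v + c$ and $f_2^*(y) \leq -c$, which sum to $f_1^*(s - y) + f_2^*(y) \leq v$. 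Combined with the easy inequality this gives equality with attainment at $s_1 = s - y$, $s_2 = y$.

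The main obstacle, and the point where the two forms of the CQ diverge, is ensuring the separator is non-vertical. A vertical separator $\{(x,t) : \inProd{y}{x} = c\}$ would properly separate the projected domains $\fDom f_1$ and $\fDom f_2$ by a hyperplane in $\Re^n$ not containing $\fDom f_1$. Under the classical CQ $\reInt \fDom f_1 \cap \reInt \fDom f_2 \neq \emptyset$, the standard proper-separation theorem rules this out. Under the weaker polyhedral CQ $\reInt \fDom f_1 \cap \fDom f_2 \neq \emptyset$ with $f_2$ polyhedral, one must instead invoke Theorem 20.2 of \cite{rockafellar}: proper separation of a convex set from a polyhedral set by a hyperplane not containing the convex side requires the relative interior of the former to be disjoint from the latter, which fails by assumption. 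With vertical separators ruled out in either scenario, a non-vertical separator exists and yields the decomposition.

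The induction step is then routine. Split as $(f_1 + \cdots + f_k) + (f_{k+1} + \cdots + f_m)$, observing that the second summand is polyhedral since a sum of polyhedral functions is polyhedral. The relative-interior formula $\reInt \fDom(f_1 + \cdots + f_k) = \bigcap_{i \leq k} \reInt \fDom f_i$ (valid under the CQ) together with $\fDom(f_{k+1} + \cdots + f_m) = \bigcap_{i > k} \fDom f_i$ converts the hypothesis of the theorem into the polyhedral CQ for this binary split. Applying the $m = 2$ case decomposes $s = s' + s''$ attaining the infimum, and then the induction hypothesis applied separately to each of the two groups further splits $s'$ and $s''$, yielding $s = s_1 + \cdots + s_m$ with $\sum_i f_i^*(s_i) = (f_1 + \cdots + f_m)^*(s)$ and completing the proof.
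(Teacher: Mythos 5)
The paper does not prove this result; it simply recalls it as Theorem 20.1 of Rockafellar's \emph{Convex Analysis}. So there is no in-paper proof to compare against, and what you have produced is a from-scratch reconstruction of a cited classical theorem. Your high-level plan (Fenchel--Young for the easy inequality, separation of $\hyp g$ and $\epi f_2$ for the reverse inequality and attainment, non-verticality from the constraint qualification, then induction on groups) is a sensible and standard route, and the easy inequality and the non-vertical separator computation are both correct.

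The gap is in ruling out vertical separators. From proper separation of $\hyp g$ and $\epi f_2$ (Theorem 11.3 in Rockafellar) you obtain \emph{some} hyperplane that does not contain \emph{both} sets, but it may well contain $\hyp g$ and hence, upon projection, contain $\fDom f_1$. In that case Theorem 20.2 of Rockafellar simply does not apply: its hypothesis is a proper separation by a hyperplane \emph{not containing the convex side}, and you have not shown your separator has that property. You asserted, without justification, that the projected separator does not contain $\fDom f_1$; this can fail. For example, take $n=1$, $f_1 = \delta_{\{0\}}$, $f_2$ the indicator of $[0,\infty)$, $s=0$. Then the vertical hyperplane $\{(x,t): x = 0\}$ properly separates $\hyp g = \{0\}\times(-\infty,0]$ from $\epi f_2 = [0,\infty)^2$ and contains $\fDom f_1$, yet the CQ $\reInt \fDom f_1 \cap \fDom f_2 = \{0\} \neq \emptyset$ holds. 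So the "vertical implies contradiction" step does not go through as written; one has to show a non-vertical separator \emph{can be chosen}, not that every separator is non-vertical. (This typically requires either passing to the affine hull of $\fDom f_1$ and iterating, or the more careful polyhedral machinery Rockafellar actually uses.) A secondary issue: in the induction step you apply the inductive hypothesis to the purely polyhedral group, whose constraint qualification is only $\bigcap_{i>k}\fDom f_i \neq \emptyset$ with no relative interior; your $m=2$ base case assumes a relative interior on at least one side, so the all-polyhedral base case (essentially LP duality / Rockafellar Corollary 19.3.3) is not actually covered by the argument you gave.
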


\begin{proposition}
Let $\stdCone = \stdCone ^1\times \stdCone ^2$, where  $\stdCone ^1\subseteq \Re^{n_1},\stdCone ^2 \subseteq \Re^{n_2}$ are closed convex cones such that $\stdCone ^2$ is polyhedral.
\begin{enumerate}[label=({\it\roman*})]
\item If $\pOpt$ is finite and \eqref{eq:primal} satisfies the PPS condition, then $\pOpt = \dOpt$ and the dual optimal value is attained.
\item If $\dOpt$ is finite and \eqref{eq:dual} satisfies the PPS condition, then $\pOpt = \dOpt$ and the primal 
optimal value is attained.
\end{enumerate}
\end{proposition}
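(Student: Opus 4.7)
The plan is to prove (ii) by applying Rockafellar's Theorem~\ref{theo:conv} to a two-term decomposition of the dual feasibility constraint, and to deduce (i) from the primal--dual symmetry of conic LPs.

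For (ii): write $c = (c_1,c_2) \in \Re^{n_1}\times \Re^{n_2}$ and correspondingly split $\stdMap x = A_1 x_1 + A_2 x_2$ (so $\stdMap^\T y = (A_1^\T y, A_2^\T y)$). Define
\begin{align*}
g_1(y) = \delta_{\stdCone^1}(c_1 - A_1^\T y), \qquad g_2(y) = \delta_{\stdCone^2}(c_2 - A_2^\T y).
\end{align*}
Both are proper convex, and $g_2$ is polyhedral because the preimage of the polyhedron $\stdCone^2$ under an affine map is polyhedral. By construction $\dOpt = (g_1+g_2)^*(b)$. The PPS slack $y^0$ for \eqref{eq:dual} satisfies $y^0 \in \reInt(\fDom g_1) \cap \fDom g_2$ (by Theorem~6.7 of \cite{rockafellar}, the affine preimage of $\reInt \stdCone^1$ coincides with the relative interior of the preimage), so Theorem~\ref{theo:conv} yields
\begin{align*}
\dOpt = (g_1+g_2)^*(b) = \min\bigl\{g_1^*(b^1) + g_2^*(b^2) : b^1 + b^2 = b\bigr\},
\end{align*}
with the minimum attained since $\dOpt$ is finite.

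Next, each $g_i^*(b^i) = \sup\{\inProd{b^i}{y} : c_i - A_i^\T y \in \stdCone^i\}$ is the value of a conic LP in dual standard form, and the classical strong duality theorem of Section~\ref{sec:not} identifies it with its primal:
\begin{align*}
g_i^*(b^i) = \min\bigl\{\inProd{c_i}{x_i} : A_i x_i = b^i,\ x_i \in (\stdCone^i)^*\bigr\},
\end{align*}
with the minimum attained. For $i=1$ the PPS point $y^0$ furnishes strong feasibility, and $g_1^*(b^1)$ is finite as a summand of the finite $\dOpt$, so strong duality applies; for $i=2$ this is plain LP duality since $(\stdCone^2)^*$ is polyhedral. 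Taking minimizers $x_1^*, x_2^*$ and setting $x^* = (x_1^*,x_2^*)$, one gets $\stdMap x^* = b^1 + b^2 = b$, $x^* \in \stdCone^*$, and $\inProd{c}{x^*} = \dOpt$; weak duality then forces $\pOpt = \dOpt$ and $x^*$ to be primal optimal.

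Part (i) follows from (ii) by primal--dual symmetry: the pair $(\eqref{eq:dual},\eqref{eq:primal})$ is itself a primal--dual conic LP pair with dual cone $\stdCone^*$, whose polyhedral block $(\stdCone^2)^*$ is polyhedral because $\stdCone^2$ is. The PPS condition on \eqref{eq:primal} coincides with PPS for the ``new dual'' in this swapped pair, so applying (ii) there gives attainment of the ``new primal'' \eqref{eq:dual}---which is precisely dual attainment in the original pair. The main obstacle in the argument is the identification of $g_i^*$ with attainment, which for $i=1$ requires strong duality for a conic sub-problem; this is exactly where PPS supplies the Slater point $y^0$ required by the classical strong duality theorem quoted in Section~\ref{sec:not}.
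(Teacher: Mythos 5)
Your proof of (ii) is correct, but it takes a genuinely different route from the paper's. The paper works in the \emph{primal} variable space $\Re^n$ and splits the primal objective into three convex functions: a polyhedral piece $f_1$ encoding the linear cost plus the affine constraint $\stdMap x = b$, the polyhedral indicator of $\Re^{n_1}\times(\stdCone^2)^*$, and the non-polyhedral indicator of $(\stdCone^1)^*\times\Re^{n_2}$; it then computes $(f_1+f_2+f_3)^*(0)$ twice (once directly, giving $-\pOpt$, once via Theorem~\ref{theo:conv}, giving $-\dOpt$ with attainment) and never needs to invoke the classical strong duality theorem as a black box. You instead work in the \emph{dual} variable space $\Re^m$, split the dual feasibility constraint into two pieces, one per block, and then hand off each conjugate $g_i^*(b^i)$ to an already-known strong duality statement --- Slater-based conic duality for $i=1$, polyhedral LP duality for $i=2$ --- before pasting the two primal attainers together. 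Your route is more modular and highlights that the ``mixed'' partial-polyhedral strong duality really is the two pure cases glued along a split of $b$; the cost is that it piggybacks on two instances of classical strong duality plus a primal--dual symmetry argument to transfer (ii) to (i), both of which are standard but require some care (the symmetry step in particular silently needs a particular solution $x_0$ of $\stdMap x = b$ to re-express $\eqref{eq:primal}$ in geometric ``dual'' form, which is available here exactly because PPS entails feasibility). The paper handles (i) and (ii) symmetrically by the same three-function device, whereas you get one of the two for free via duality of the argument itself. The one place to make explicit in your write-up is the finiteness of each $g_i^*(b^i)$: since each $g_i$ is proper ($y^0 \in \fDom g_i$), each $g_i^*$ is $>-\infty$ everywhere, so the finite sum $\dOpt$ forces both summands finite --- you allude to this for $i=1$ but the same reasoning is needed before invoking LP duality for $i=2$.
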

\begin{proof}
We will prove $(i)$ first. Let 
$f_1$ be such that $f_1(x) = \inProd{c}{x}$ if $Ax = b$ and $+\infty$ otherwise.
Let  $f_2$ be the indicator function of $\Re^{n_1}\times (\stdCone ^2)^*$ and $f_3$ be the indicator 
function of $(\stdCone ^1)^*\times \Re^{n_2}$. Since there is  a primal feasible solution $x = (x_1,x_2)$ such 
that $x_1 \in \reInt (\stdCone ^1)^*$, we have that $\fDom f_1 \cap \fDom f_2 \cap \reInt (\fDom f_3)$ is 
nonempty. In addition, $f_1$ and $f_2$ are polyhedral functions. Let us now observe that:
\begin{align*}
f_1^*(s) = \left\{
  \begin{array}{l l}
    \inProd{b}{y}  & \quad \text{if there is $y$ with $s-c = \stdMap^\T y$}\\
    +\infty & \quad \text{otherwise}
  \end{array}\right. 
\end{align*}
Note that, due to feasibility, for fixed $s$, $\inProd{b}{y}$ does not depend on the choice 
of $y$, as long as $c+\stdMap ^\T y = s$. This is because since there is $x$ such that $Ax = b$, we have $\inProd{b}{y} = \inProd{x}{s-c}$.
The conjugate $f_2^*$ is the indicator function of $- \{0\}\times \stdCone ^2$ and 
$f_3^*$ is the indicator function of $- \stdCone ^1\times \{0\}$. 
Applying Theorem \ref{theo:conv} with $s = 0$, we have:
\begin{align*}
(f_1+f_2+f_3)^*(0) & = \inf \left\{ \inProd{b}{y} \mid  c+ \stdMap ^\T y = s_1, s_1 - (s_3,s_2) = 0,  s_2 \in  \stdCone ^2, s_3 \in \stdCone ^1      \right\} \\
 & = \inf \left\{ \inProd{b}{y} \mid  c+ \stdMap ^\T y = s_1, s_1 \in  \stdCone ^1\times \stdCone ^2      \right\} \\
& = - \sup \left\{ \inProd{b}{y} \mid  c -\stdMap ^\T y = s_1,  s_1 \in  \stdCone ^1 \times \stdCone ^2      \right\},
\end{align*}
where the $\sup$ in the last equation is attained. So, there is some dual feasible $y$ such that 
$(f_1+f_2+f_3)^*(0) =  \inProd{b}{y}$. However, using the definition of conjugate, we also have:
\begin{align*}
(f_1+f_2+f_3)^*(0) = - \inf \{ \inProd{c}{x} \mid Ax = b, x \in \stdCone ^1\times \stdCone ^2\} = -\theta _P.
\end{align*}
It follows that $\pOpt = \dOpt$ and the dual is attained at $y$. 
To prove $(ii)$, let $g_1 = f_1^*$, and let $g_2$ and $g_3$ be the indicator 
functions  of $\Re^{n_1}\times \stdCone ^2$ and  $\stdCone ^1 \times \Re^{n_2}$, respectively. Again, 
it is enough to compute $(g_1+g_2+g_3)^*(0)$ using both the definition of conjugate function and 
using Theorem \ref{theo:conv}. 
\end{proof}

\section{Examples} \label{app:ex}
%

 

\begin{example}
We will apply FRA-Poly to the following problem.
\begin{align}
{\text{find}} & \quad y \nonumber\\ 
\mbox{subject to} & \quad (y_1,-y_1) \times \begin{pmatrix}y_1 & y_2 \\ y_2 & y_3 \end{pmatrix} \in \Re^2_ + \times \PSDcone{2}. \nonumber
\end{align}
At the first step we have $\stdFace _1 = \Re^2_ + \times \PSDcone{2}$ and we build \eqref{eq:red} and \eqref{eq:red_dual} using $e = 0\times I_2$ and $e^* = (1,1) \times I_2$, where 
$I_2$ is the $2\times 2$ identity matrix. Solving \eqref{eq:red} and \eqref{eq:red_dual}, suppose that we have found the reducing direction 
$$
d_1 = (1,2) \times \begin{pmatrix}1 & 0 \\ 0 & 0 \end{pmatrix}.
$$
Then $\stdFace _2 = \stdCone \cap \{d_1\}^\perp = \{0\} \times \hat \stdFace$, where the matrices in $\hat \stdFace $ have zeros in all entries 
except in the $(2,2)$ entry. Note that $\stdFace _2$ is polyhedral, so when 
it is time to build  $\eqref{eq:red}$ and \eqref{eq:red_dual} again, we will take $e = 0$ and since the {\PPS} condition is satisfied, 
we will have $\opt{\text{\ref{eq:red}}} > 0$.
Note that this is a case where Phase 1  ends  at the minimal face and 
there is no need to proceed further, although we might want to solve the problem in Phase 2 in order to obtain a point in 
$\reInt \minFaceD$.

Note that if the first block of $d_1$ were $(0,1)$ instead of $(1,2)$, then we would have 
$\stdFace _2 =   \Re_+ \times \{0\} \times \hat \stdFace$, so we would need a Phase 2 iteration  to 
find a reducing direction $d_2$ such that $\minFaceD = \stdFace _2 \cap \{d_2\}^\perp$. Still, if we are able to implement 
the choice in \eqref{eq:dir_ri} we will never need to go for a second direction, since we will always take the most 
interior direction possible.
\end{example}

%
%
\medskip
Let 
$\stdCone = \SOC{t_1} \times \ldots \times \SOC{t_{r_1}} \times \PSDcone{n_1}  \times \ldots \times \PSDcone{n_{r_2}}$.
We will assume that $r_1 + r_2 > 0$,
$t_j \geq 3$ and $n_j \geq 3$ for every $j$. Given $x$, we will use 
$x^j_{i,k}$ to denote the $(i,k)$ entry of the $j$-th matrix block and $x^j_i$ to 
denote the $i$-entry of the $j$-th  vector block. We will also 
use the same notation to single out a few special elements.
For $j \in [1,r_2 ], a^j_{i,k} \in \stdCone$ is
such that all its blocks are zero except for the block corresponding to $\PSDcone{n_j}$. In that block 
$a^j_{i,k}$ contains the $n_j\times n_j$ matrix that has one at the 
$(i,k)$ and $(k,i)$  entries and zero elsewhere. Similarly, for $j \in [1, r_1]$, $a^j_i \in \stdCone$ is such 
that all its blocks are zero except for the block corresponding to $\SOC{t_j}$, where $a^j_i$ corresponds to the $i$-th unit vector.  

Recall that if $d = (d_0,\overline{d})$ and $x = (x_0,\overline{x})$ are points 
of $\SOC{n}$ with $d_0,x_0 \in \Re$ and $\overline{d},\overline{x} \in \Re^{n-1}$, then 
$\inProd{d}{x} = 0$ implies  $d_0\overline{x} + x_0\overline{d} = 0 $. In particular, if 
$d$ is a nonzero boundary point of $\SOC{n}$, then the face $\SOC{n} \cap \{d\}^\perp$ is 
equal to the half-line $h_{d'} = \{\alpha d' \mid \alpha \geq 0 \}$ where 
$d' = (d_0,-\overline{d})$. We also have $h_{d'}^* = \{x \mid \inProd{x}{d'} \geq 0 \} $.

Let $\stdSpace ^\perp$ be the space spanned by the following vectors:
\begin{enumerate}
	\item $a^1_{1} + a^1_{2}$ and $\{a^{j-1}_3 + a^j_{1} + a^j_{2} \mid 1 < j \leq r_1\}$,
	\item $a^{r_1}_3 + a^1_{1,1}$ and  $\{a^{1}_{i,i} + a^{1}_{i-1,i+1}  \mid 1 < i < n_1\}$ (if $r_1 = 0$, use $a^1_{1,1}$ instead of  $a^{r_1}_3 + a^1_{1,1}$),
	\item $a^{j-1}_{n_j,n_{j-1}} + a^j_{1,1}$ and  $\{a^{j}_{i,i} + a^{j}_{i-1,i+1}  \mid 1 < i < n_j\}$, for $1 < j \leq r_2$.
\end{enumerate}

\begin{remark}
It will be helpful  to keep in mind the case where $r_1 = r_2 = 2$, $n_1 = n_2 = t_1 = t_2 = 3$.
In this case, $\stdSpace ^\perp$ is spanned by elements having the following format
	\begin{equation*}
	\begin{pmatrix}y_1 \\ y_1 \\ y_2 \end{pmatrix} \times \begin{pmatrix}y_2 \\ y_2 \\ y_3 \end{pmatrix} \times 
	\begin{pmatrix}y_3 & 0 & y_4 \\ 0 & y_4 & y_5 \\ y_4 & y_5 & 0  \end{pmatrix} \times 
	\begin{pmatrix}y_5 & 0 & y_6 \\ 0 & y_6 & 0 \\ y_6 & 0 & 0   \end{pmatrix}. 
	\end{equation*} 
\end{remark}

\begin{proposition}\label{prop:worst_bound}
Consider the problem \eqref{eq:dual} with $c = 0$ and $\stdMap$ such that $ \matRange \stdMap^\T = \stdSpace$, where $\stdSpace ^\perp$ is 	the subspace constructed above. Then 
	$
	d(D) =	 r_1 + \sum _{j = 1}^{r_2} (n_j  - 1).
	$
\end{proposition}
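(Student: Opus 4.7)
The plan is to prove equality by matching upper and lower bounds, after first identifying $\minFaceD$. Since $c = 0$, the feasible slacks are $\stdCone \cap \stdSpace$, and I would process the orthogonality constraints $\inProd{g_k}{x} = 0$ in the order $g_1, g_2, \ldots$ given by the construction. The first generator forces the first Lorentz block onto the half-line $h_{(1,-1,0,\ldots,0)}$; the next chains into the second Lorentz block and does the same; the transition generator $a^{r_1}_3 + a^1_{1,1}$ then zeroes $M_1^{1,1}$, which by PSD positivity zeros the entire first row/column of $M_1$; subsequent generators within a PSD block cascade through $M^{i,i} = 0$ together with $M^{i-1,i+1} = 0$ forced by the previous row/column zeros; the inter-block generators pass the reduction to the next PSD block. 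A direct induction shows $\stdCone \cap \stdSpace$ is itself a product of half-lines (and possibly $\{0\}$), hence is a face of $\stdCone$, so $\minFaceD = \stdCone \cap \stdSpace$.

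For the upper bound $d(D) \leq \ell$, I would exhibit the explicit sequence $d_k := g_k$ and verify inductively that $g_k \in \stdFace_k^* \cap \ker\stdMap \setminus \stdFace_k^\perp$, where $\stdFace_k = \stdCone \cap \bigcap_{j<k}\{g_j\}^\perp$. The point is that $g_k$ decomposes into an ``incoming'' component in a block already reduced by $g_{k-1}$ (e.g.\ $(0,0,1)$ in a Lorentz block now equal to $h_{(1,-1,0)}$, or $E_{i-1,i}$ in a PSD face with both the first $i-1$ and last rows/columns already killed) and an ``outgoing'' component in a block not yet reduced (which lies in the self-dual original cone). The incoming component is automatically in the enlarged dual face, so $g_k \in \stdFace_k^*$; the outgoing component ensures $g_k \notin \stdFace_k^\perp$. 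After $\ell$ steps, $\stdFace_{\ell+1} = \stdCone \cap \stdSpace = \minFaceD$.

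For the lower bound $d(D) \geq \ell$, take any valid reduction sequence $d_1,\ldots,d_m$ with $\stdFace_{m+1} = \minFaceD$. Since $c = 0$, every $d_i$ lies in $\ker\stdMap = \stdSpace^\perp$; since each $d_i \notin \stdFace_i^\perp$ and earlier directions annihilate $\stdFace_i$, the $d_i$ are linearly independent, so $\dim W_m = m$ with $W_m := \spanVec\{d_1,\ldots,d_m\} \subseteq \stdSpace^\perp$. Because $\stdFace_{m+1} = \stdCone \cap W_m^\perp$, it suffices to show that $\stdCone \cap W^\perp = \stdCone \cap \stdSpace$ forces $W = \stdSpace^\perp$, giving $m = \ell$.

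The main obstacle is this last implication, which is where the chain design of $\stdSpace^\perp$ does its work. I would prove it by induction on $i$: at each step, the ``active'' cone $\stdFace_i^* \cap \stdSpace^\perp$ spans exactly $\spanVec\{g_1,\ldots,g_i\}$, so $d_i$ must introduce $g_i$ into $W_{i-1}$ and one cannot ``unlock'' several generators in a single step. The reason is that each $g_{k+1}$ has a distinguished component (either a Lorentz coordinate $(0,\ldots,0,1)$ requiring that block to have collapsed to the half-line $h$, or a PSD entry of the form $E_{i,i}+E_{i-1,i+1}$ or $E_{i-1,i}+E_{i,1}$ whose bilinear form on the current dual block fails positivity unless the specific previous row/column has been zeroed by $g_k$) that only becomes dual-feasible after $g_k$ has acted. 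A block-by-block Schur-complement / rank argument in the PSD case, together with the elementary characterization of faces of $\SOC{t}$, gives the inductive step; the somewhat delicate bookkeeping of which coefficient is forced to vanish at each stage constitutes the main technical content of the proof.
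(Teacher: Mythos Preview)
Your proposal is correct, and the decisive step---the induction in your final paragraph showing that at each stage $\stdFace_i^* \cap \stdSpace^\perp$ only makes the single new generator $g_i$ available, so any valid reducing direction produces the same next face---is exactly the paper's argument. The paper simply organises this differently: rather than separating into an upper bound (exhibit $g_1,\ldots,g_\ell$) and a lower bound, it argues in one pass that for \emph{any} sequence of reducing directions the resulting chain of faces is uniquely determined, which yields $d(D)=\ell$ directly without needing to identify $\minFaceD$ beforehand.

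Two small remarks. First, your intermediate formulation ``$\stdCone\cap W^\perp=\stdCone\cap\stdSpace$ forces $W=\stdSpace^\perp$'' is false for arbitrary subspaces $W\subseteq\stdSpace^\perp$ (take $\stdCone=\Re^2_+$, $\stdSpace=\{0\}$, $W=\spanVec\{(1,1)\}$); it only holds here because each $d_i$ is further constrained to lie in $\stdFace_i^*$, and you correctly rely on that constraint in your actual inductive argument. Second, the paper's claim that ``only positive multiples of $g_i$'' belong to $(\stdFace_i^*\cap\stdSpace^\perp)\setminus\stdFace_i^\perp$ is slightly imprecise---combinations $\sum_{j\le i}\alpha_j g_j$ with $\alpha_i>0$ also qualify---but all such choices yield the same face $\stdFace_i\cap\{g_i\}^\perp$, which is what both arguments need.
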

\begin{proof}
	First, suppose that $r_1 > 0$.
	The first reducing direction must be some  $x \in (\stdCone \cap \stdSpace^\perp )\setminus \stdCone^\perp$. However, if $x \in \stdSpace^\perp$,
	then 	$x^1_1 = x^1_2$. Then, because $x^1 \in \SOC{t_1}$, we have 
	$x^1_i = 0$ for all $i\geq 3$.  Therefore, the coefficient of $a^{1}_3 + a^2_{1} + a^2_{2}$ appearing 
	in $x$ must be zero as well. It follows that all  blocks of $x$ are zero, 
	except for $x^1$. We conclude that $x$ must be a positive multiple of $ a^1_{1} + a^1_{2}$. So let $d_1 = a^1_{1} + a^1_{2}$, we then have 
	$$\stdFace _2 = \stdCone \cap \{d_1\}^\perp = h_{d_1'} \times \SOC{t_{2}} \times \ldots \times \SOC{t_{r_1}} \times \PSDcone{n_1}  \times \ldots \times \PSDcone{n_{r_2}}, $$
	where $h_{d_1'}$ is contained in $\SOC{t_1}$ and is the half-line along the direction defined by the nonzero part of $a^1_{1} - a^1_{2}$.
At the next step, it turns out that only the positive multiples 
	of $a^{1}_3 + a^2_{1} + a^2_{2}$ belong to $(\stdFace _2^* \cap \stdSpace^\perp) \setminus \stdFace _2^\perp $. This means 
	that facial reduction must proceed by successively selecting  positive multiples of:
	\begin{enumerate}
		\item $d_1 = a^1_{1} + a^1_{2}$ and $d_j = a^{j-1}_3 + a^j_{1} + a^j_{2}$, for $1 < j \leq r_1$.
	\end{enumerate}
	After $r_1$ steps, all the Lorentz cone blocks will be transformed to half-lines 
	and we will have $\stdFace _{r_1+1} =  h_{d_1'} \times \ldots \times h_{d_{r_1}'} \times \PSDcone{n_1}  \times \ldots \times \PSDcone{n_{r_2}} $, 
	where for every $1 < j \leq r_1 $, $h_{d_j'}$ is the half-line in $\SOC{t_j}$ along the direction defined by the nonzero part of $a^j_{1} - a^j_{2}$. 	If $r_2 = 0$, we are done.
	Otherwise, we  have $(\stdFace _{r_1+1}^*\cap \stdSpace ^\perp)\setminus \stdFace _{r_1+1}^\perp  = \{t(a^{r_1}_3 + a^1_{1,1}) \mid t > 0 \}$.
	Again, we must proceed ``one row at time'' and select positive multiples of 
	$a^{1}_{i,i} + a^{1}_{i-1,i+1}$ for  $1 < i < n_1$ as the reducing directions. In total, we find $n_1 - 1$ directions before we can move to the next block. For each block 
	$n_j -1$ directions will be found, so in total we obtain $r_1 + \sum _{j = 1}^{r_2} (n_j  - 1)$
	directions. 	The case $r_1 = 0$ follows similarly.
\end{proof}
\end{document}